\documentclass[11pt, reqno]{amsart}
\usepackage[margin=1in]{geometry}
\usepackage{amsmath,amssymb}
\usepackage{xcolor} % A package to add color.
\usepackage{amsthm}
\usepackage{amsfonts}
\usepackage{subcaption}
\usepackage{graphicx}
\usepackage[dvpis]{epsfig}
 \usepackage[colorlinks=true]{hyperref}
 \hypersetup{urlcolor=blue, citecolor=red}
\usepackage{multirow}

\makeatletter
\newcommand*{\rom}[1]{\expandafter\@slowromancap\romannumeral #1@}
\makeatother
\newcommand{\N}{\mathbb{N}}
\newcommand{\R}{\mathbb{R}}

\newtheorem{theorem}{Theorem}[section]
\newtheorem{corollary}[theorem]{Corollary}
\newtheorem{lemma}[theorem]{Lemma}
\newtheorem{proposition}[theorem]{Proposition}

\theoremstyle{definition}
\newtheorem{definition}[theorem]{Definition}
\newtheorem{example}[theorem]{Example}
\theoremstyle{remark}

\newtheorem{remark}[theorem]{Remark}
\numberwithin{equation}{section}

\newcommand{\T} {\mathbb T}
\newcommand{\pa}{\partial}

\newcommand{\e}{\varepsilon}
\newcommand{\lt}{\left}
\newcommand{\rt}{\right}
\newcommand{\bq}{\begin{equation}}
\newcommand{\eq}{\end{equation}}

\newcommand{\bbI}{\mathbb I}

\newcommand{\bbS}{\mathbb S}

\newcommand{\calA}{\mathcal A}

\newcommand{\calC}{\mathcal C}

\newcommand{\calF}{\mathcal F}

\newcommand{\calI}{\mathcal I}

\newcommand{\calR}{\mathcal R}

\newcommand{\intr}{\int_{\R^N}}
\renewcommand{\geq}{\geqslant}
\renewcommand{\leq}{\leqslant}
\newcommand{\dx}{\textnormal{d}x}
\newcommand{\dv}{\textnormal{d}v}
\newcommand{\ds}{\textnormal{d}s}
\newcommand{\du}{\textnormal{d}u}
\newcommand{\dr}{\textnormal{d}r}

\newcommand{\dA}{\textnormal{d}A}

\newcommand{\dsi}{\textnormal{d}\sigma}
\newcommand{\ov}{\overline}

\usepackage{soul}
\usepackage[normalem]{ulem}
\usepackage{cancel}

\renewcommand{\sout}[1]{}
\renewcommand{\cancel}[1]{}

\begin{document}

\title[From nonlinear Fokker--Planck to barotropic Euler equations]{Hydrodynamic limit from nonlinear Fokker--Planck to barotropic Euler equations}

\author[Carrillo]{Jos\'{e} A. Carrillo}
\address[Jos\'{e} A. Carrillo]{
    \newline Mathematical Institute, University of Oxford, Oxford OX2 6GG, UK}
\email{jose.carrillo@maths.ox.ac.uk}

\author[Koo]{Dowan Koo}
\address[Dowan Koo]{ \newline Mathematical Institute, University of Oxford, Oxford OX2 6GG, UK}
\email{dowan.koo@maths.ox.ac.uk}

\date{\today}
\subjclass{Primary, 82C40; Secondary, 35Q84, 76N10}

\keywords{Hydrodynamic limit, barotropic Euler equations, Fokker--Planck equation, Log Sobolev inequality, relative entropy}

\thanks{\textbf{Acknowledgment.} 
JAC was supported by the Advanced Grant Nonlocal-CPD (Nonlocal PDEs for Complex Particle Dynamics: Phase Transitions, Patterns and Synchronization) of the European Research Council Executive Agency (ERC) under the European Union Horizon 2020 research and innovation programme (grant agreement No. 883363) and partially supported by the EPSRC EP/V051121/1. JAC was partially supported by the ``Maria de Maeztu'' Excellence Unit IMAG, reference CEX2020-001105-M, funded by MCIN\slash AEI \slash10.13039\slash501100011033\slash. 
 DK was supported by NRF grant no. 2022R1A2C1002820, RS-2024-00406821, and RS-2025-02312778.}

\begin{abstract} The hydrodynamic limit to the barotropic Euler equations, including power-law pressure $P(\rho)=\rho^\gamma$, for a kinetic nonlinear Fokker--Planck equation with degenerate diffusion is established. This extends the well-known result of the derivation of isothermal Euler equations via Fokker--Planck equation with linear diffusion. We establish the asymptotic analysis using the relative entropy method by quantifying error estimates for pressures and employing the generalized Log-Sobolev inequality for degenerate diffusion.
\end{abstract}
% ----------------------------------------------------------------

\maketitle

% ----------------------------------------------------------------

\section{Introduction}

We consider a kinetic nonlinear Fokker--Planck equation of the following form:
\bq\label{main_eq}
 \partial_t f + v \cdot \nabla_x f  = \frac{1}{\e}\nabla_v \cdot \lt( \nabla_v L_{\psi}(f) + (v-u_f)f\rt)
\eq
where $L_\psi$ is a non-linear diffusion law (see Definition \ref{def:L}). The distribution $f$ stands for the statistical density of a rarefied gas subject to the collisional operator on the right-hand side of \eqref{main_eq} and the corresponding physical quantities, namely the macroscopic density and momentum are defined respectively as
\[
\rho_f:=\intr f\,\dv,\qquad m_f:= \intr vf\,dv.
\]
The bulk velocity $u_f$ is defined as
\[
u_f:= \begin{cases} \frac{m_f}{\rho_f} \quad &\rho_f>0, \\
 0\quad &\rho_f=0.
\end{cases}
\]
The main objective of this work is to establish that the macroscopic quantities $\rho_f, u_f$ satisfy the compressible Euler equations with a certain pressure law as $\e \to 0$; hence, a broad class of barotropic Euler equations can arise as the hydrodynamic limit of \eqref{main_eq}.

One of the most typical barotropic Euler equations is the compressible fluid equations with a power-law pressure:
\begin{align*}
\begin{cases}
\pa_t \rho + \nabla_x \cdot(\rho u) = 0, \\
\pa_t (\rho u) + \nabla_x \cdot (\rho u \otimes u) + \theta \nabla_x \rho^\gamma = 0,
\end{cases}
\end{align*}
where $\gamma \ge 1$  and $\theta \ge 0$ are constants. Here, $\rho$ and $u$ denote the density and velocity of the gas. In particular, the $\theta=0$ case corresponds to the pressureless gas dynamics. As long as $\theta>0$, the system corresponds to the isothermal Euler for $\gamma=1$ and isentropic Euler for $\gamma>1$.

For the isothermal case, Berthelin and Vasseur \cite[Theorem 1.1]{BV05} established via the relative entropy method that the isothermal Euler equations can be derived as the hydrodynamic limit of the following kinetic equations:
\bq\label{eq:lFP}
\pa_t f + v \cdot \nabla_x f = \frac{1}{\e}\nabla_v \cdot (\theta \nabla_v f +(v-u_f)f).
\eq 
We remark that the linear diffusion in \eqref{eq:lFP} corresponds to the isothermal pressure law $P(\rho)=\theta \rho$.

For the isentropic case, Bouchut \cite{bouchut1999} introduced a variant of the BGK model to derive isentropic Euler equations for the range $\gamma \in (1,\frac{N+2}{N}]$, where $N\in\N$ denotes the dimension of the spatial domain. A mathematical justification of the hydrodynamic limit for this BGK model was established in \cite{BV05, KS24} based on the relative entropy method. This BGK operator has been recently employed in deriving the  Euler alignment system with isentropic pressure \cite{CH24b}. In the monodimensional case, the hydrodynamic limit can also be achieved even when shocks appear \cite{BB02a, BB02b}. 

Recently, the kinetic Fokker--Planck equation \eqref{main_eq} with $L_\psi(s)=s^m$ and without the alignment term $\nabla_v\cdot((v-u_f)f)$ was considered by Brigatti, Carlier, and Dolbeault \cite{BCD26}. The fundamental solutions are obtained in the range $m \in (1-\frac{1}{N},1) \cup (1,1+\frac{1}{N})$. The asymptotic behaviour, and the construction of weak solutions is studied under initial data being trapped between two self-similar profiles with different masses.

While the hydrodynamic limit related to the isothermal pressure or pressureless case via \eqref{eq:lFP} has been extensively studied \cite{MV08, KMT15, CCJ21, CJ21, CK23, FK19}, there has been no study on the derivation of isentropic or more general pressure via Fokker--Planck type equation, to the best of the authors' knowledge. This motivates the current study, and we fill this gap by considering \eqref{main_eq}. The existence of the system \eqref{main_eq} is not treated in this work; thus, the hydrodynamic limit result is a priori. However, more recently, the gradient flow structure and the construction of solutions using the JKO scheme for the Fokker--Planck equation \eqref{main_eq} with $L_\psi(s)=s^m$ with $u_f=0$ are studied in \cite{BCDQ26}.

The main technical contribution of this paper is Section \ref{sec:key}, where the errors of pressure tensors are controlled by (a lower bound of) dissipation of the kinetic models \eqref{main_eq}. This type of estimate first arises in the work of Berthelin and Vasseur \cite{BV05} in the case of BGK-type model with $\gamma = \frac{N+2}{N}$ case. Later, it was extended to $\gamma \in (1,\frac{N+2}{N})$ in \cite{KS24} by introducing \emph{extension map}. This extension map was recently employed to study nonlinear diffusion limit in \cite{CKT26}. In the current work, we carefully design the extension map to study general nonlinear diffusion. Then, combined with the generalized Log-Sobolev inequality obtained in \cite{CJMTU01} with relative entropy method, we establish the hydrodynamic limit result, which is addressed in Theorem \ref{thm:hdr}.

The rest of the paper is organized as follows. Section \ref{sec:P} introduces the axiomatic assumptions of diffusion law in kinetic model  \eqref{main_eq} with related properties. The relationship between diffusion law in kinetic model and pressure law in compressible fluid equation is addressed. In particular, a class of admissible pressure laws -- including power law pressures and even power with different exponents at end points -- are discussed in Section \ref{sec:ap}. Then, Section \ref{sec:M} studies the properties of the associated local Maxwellian to \eqref{main_eq} and obtain a lower bound of kinetic dissipation by the generalized Log-Sobolev inequality. Section \ref{sec:key} records the key technical lemmas to establish the hydrodynamic limit. Lastly, in Section \ref{sec:mr}, we state our main results and provide their proof based on the relative entropy method. In Appendix \ref{app}, we check our diffusion laws satisfy the conditions for the generalized Sobolev inequality to hold.

\textbf{Notations.} Throughout this paper, $C$ may denote a generic constant, which may differ from line to line. We sometimes put subscripts to stress the dependence on certain parameters.

\textbf{Domain.} We consider $\R^N$ as a spatial domain. The parallel results should also be satisfied for the periodic domain $\T^N$.

\section{Kinetic diffusion -- macroscopic pressure correspondence}\label{sec:P}
In this section, we describe how the macroscopic pressure law at fluid level corresponds to the microscopic nonlinear diffusion law at the kinetic level. We begin by defining the non-linear diffusion law $L_\psi$ in \eqref{main_eq} as follows.

\begin{definition}\label{def:L}
Let $\psi:[0,\infty)\to[0,\infty)$ be a $ \calC^3$ function and define
\[
\Psi(s):=\int_0^s \psi(u)\,du,\qquad 
L_\psi(s):=s\psi(s)-\Psi(s)\qquad (s\ge0).
\]
We call $L_\psi$ the \emph{diffusion law} and $\Psi$ the associated entropy. We assume:
\begin{itemize}
\item[(A1)] $\psi'(s)>0$ for all $s>0$ (hence $\psi$ is strictly increasing on $(0,\infty)$).
\item[(A2)] $\psi(0)=0$.
\item[(A3)] There exists $\Lambda\ge1$ such that for all $0\le s_1<s_2<2s_1$,
\begin{equation}\label{eq:double}
\frac{1}{\Lambda}(\psi^{-1})'(s_1)\le (\psi^{-1})'(s_2)\le \Lambda(\psi^{-1})'(s_1).
\end{equation}
\item[(A4)] $\psi'(s) + s\psi''(s) \ge 0$ for all $s>0$.
\end{itemize}
\end{definition}

\begin{example}

A typical class of degenerate diffusion satisfying (A1) and (A2) is porous medium diffusion, namely,
\[
L_{\psi}(s)=s^m,\quad \Psi(s):=\frac{s^m}{m-1}
\]
with $m>1$. Then we can easily notice that $\psi(s) = \frac{ms^{m-1}}{m-1}$ satisfies
\[
\psi(0)=0,\quad \psi'(s) = ms^{m-2}>0
\]
for any $s>0$. Moreover, it also satisfies (A3). Since $\psi^{-1}(s) = \lt(\frac{m-1}{m}s\rt)^{\frac{1}{m-1}}$, we have
\[
(\psi^{-1})'(s)= \frac{1}{m-1}\lt(\frac{m-1}{m}\rt)^{\frac{1}{m-1}}s^{\frac{2-m}{m-1}}.
\]
For $m\ge2$, $(\psi^{-1})'$ is nonincreasing, hence \eqref{eq:double} holds with $\Lambda_m = 2^{\frac{m-2}{m-1}} \ge 1$. For $1<m<2$, $(\psi^{-1})'$ is increasing and \eqref{eq:double} is satisfied with $\Lambda_m= 2^{\frac{2-m}{m-1}}>1$. Notice that $\Lambda_m \nearrow \infty$ as $m \to 1^+$. Lastly, $\psi' + s\psi'' = m(m-1) s^{m-2} > 0$ for all $s>0$ confirms (A4). 
\end{example}

\begin{remark}
Assumption (A1) implies $\psi$ is invertible on $(0,\infty)$ and $\Psi$ is convex. Assumption (A2) ensures degeneracy of diffusion. Thus, it excludes the linear diffusion $L_\psi(s)=s$ with the Boltzmann entropy $\Psi(s)=s\log s$, which is in line with the previous observation $\Lambda_m \to \infty$ as $m \to 1^+$. Assumption (A3) is a local doubling condition on $(\psi^{-1})'$, which will be crucially employed in proving  Proposition \ref{prop:diss} and Lemma \ref{lem:crit}, which are to come up later. (A4) will be imployed to satisfy the assumption on nonlinear diffusion $L_\psi$ for the generalized Log-Sobolev inequality in Section \ref{app}.
\end{remark}

We then introduce an important quantity $h_\psi$, which plays the role of enthalpy at the macroscopic level.

\begin{lemma}\label{lem:h}
Suppose $\psi$ satisfies (A1)--(A2). There exists a strictly increasing $h_\psi\in C^1((0,\infty))$ with $h_\psi(0)=0$ such that for every $\rho\ge0$
\begin{equation}\label{def:h}
\int_{\mathbb R^N} \psi^{-1}\!\Big( \big(h_\psi(\rho)-\tfrac12|v|^2\big)_+\Big)\,dv = \rho.
\end{equation}
Moreover, for every $\rho>0$, $c>1$,
\begin{equation}\label{eq:h:dou}
h_\psi(c\rho) \le c^{\frac{2}{N}}h_\psi(\rho).
\end{equation}
As a consequence,
\bq\label{eq:h:asym}
    \begin{aligned}
    h_\psi(\rho) \le h_\psi(1)\rho^{\frac{2}{N}}\quad \text{for}\,\,\, \rho> 1,\\
     h_\psi(\rho) \ge h_\psi(1)\rho^{\frac{2}{N}}\quad \text{for}\,\,\, \rho< 1.
    \end{aligned}
    \eq
If we further assume $\psi$ satisfies
\bq\label{eq:d:ov}
\psi^{-1}(2s) \le \ov\Lambda \psi^{-1}(s)\quad\text{for all}\,\,\,s\ge 0,
\eq
for some $\ov\Lambda >1$, then we have
\bq\label{eq:h:lb}
h_\psi(\rho) \ge 2h_\psi\lt(\tfrac{\rho}{L}\rt)
\eq
where $L:=2^{\frac{N}{2}}\ov\Lambda^2>1$.
\end{lemma}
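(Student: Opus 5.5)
Define $F(h):=\int_{\R^N}\psi^{-1}\big((h-\tfrac12|v|^2)_+\big)\,dv$ for $h\ge0$ and show that $F$ is a $C^1$ increasing bijection of $[0,\infty)$. Then $h_\psi:=F^{-1}$ gives \eqref{def:h}.

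\textbf{Construction of $h_\psi$.} Passing to polar coordinates and substituting $v=\sqrt{2h}\,w$ gives
\[
F(h)=(2h)^{\frac N2}\int_{|w|\le1}\psi^{-1}\big(h(1-|w|^2)\big)\,dw .
\]
By (A1)--(A2), $\psi^{-1}$ is continuous, strictly increasing, and vanishes at $0$. Hence $F(0)=0$, $F$ is strictly increasing, and $F(h)\to\infty$ as $h\to\infty$, because both factors are increasing and the first is unbounded. For regularity, differentiate under the integral:
\[
F'(h)=\int (\psi^{-1})'\big((h-\tfrac12|v|^2)_+\big)\,\mathbf 1_{|v|^2<2h}\,dv>0 .
\]
This is valid since $(\psi^{-1})'=1/\psi'\circ\psi^{-1}$ is continuous on $(0,\infty)$. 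Near $s=0$ it may blow up, but it is integrable in $s$ because $\psi^{-1}$ is finite. Indeed, the radial integral $\int_0^{\sqrt{2h}}(\psi^{-1})'(h-r^2/2)\,r^{N-1}\,dr$ becomes $\int_0^h(\psi^{-1})'(s)\,(2(h-s))^{\frac{N-2}{2}}\,ds$. For $N\ge2$ this is finite and continuous in $h$. For $N=1$ one instead differentiates the form $F(h)=\int_0^h \psi^{-1}(s)\,(2(h-s))^{-1/2}\,ds$ after an integration by parts, and this step needs some care. The inverse function theorem then gives $h_\psi\in C^1((0,\infty))$, strictly increasing, with $h_\psi(0)=0$.

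\textbf{Proof of \eqref{eq:h:dou}.} For $c>1$ and $h>0$, monotonicity of $\psi^{-1}$ in the scaled form gives
\[
F(c^{2/N}h)=(2c^{2/N}h)^{\frac N2}\int_{|w|\le1}\psi^{-1}\big(c^{2/N}h(1-|w|^2)\big)\,dw\ \ge\ c\,F(h).
\]
Apply this with $h=h_\psi(\rho)$ to get $F(c^{2/N}h_\psi(\rho))\ge c\rho=F(h_\psi(c\rho))$. Monotonicity of $F$ then yields $h_\psi(c\rho)\le c^{2/N}h_\psi(\rho)$. The bounds \eqref{eq:h:asym} follow by taking $\rho=1$, $c=\rho$ when $\rho>1$, and $c=1/\rho$ applied at $\rho$ when $\rho<1$.

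\textbf{Proof of \eqref{eq:h:lb}.} This is the only step using \eqref{eq:d:ov}, and it is the main point to verify. Again by the scaled form, with $h=h_\psi(\rho/L)$,
\[
F(2h)=2^{\frac N2}(2h)^{\frac N2}\int_{|w|\le1}\psi^{-1}\big(2h(1-|w|^2)\big)\,dw\le 2^{\frac N2}\,\ov\Lambda\,F(h)=2^{\frac N2}\ov\Lambda\,\frac{\rho}{L}=\frac{\rho}{\ov\Lambda}<\rho .
\]
Hence $F(2h)<F(h_\psi(\rho))$, so $2h_\psi(\rho/L)\le h_\psi(\rho)$. This shows the stated $L=2^{N/2}\ov\Lambda^2$ is more than enough; $L=2^{N/2}\ov\Lambda$ would already suffice, so the proof goes through with room to spare.
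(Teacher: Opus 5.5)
Your proof is correct. It has the same skeleton as the paper's: invert $F=\calI$, compare $F$ at dilated levels, then transfer the comparison to $h_\psi=F^{-1}$ by monotonicity. The comparison step, however, is implemented differently.

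The paper proves $\calI(cR)\ge c^{N/2}\calI(R)$, and the reverse bound under \eqref{eq:d:ov}, by cutting velocity space into the dyadic shells $\calC_i(c,R)$ of \eqref{eq:Ci}. It uses $|\calC_i(c,cR)|=c^{N/2}|\calC_i(c,R)|$ and bounds $\psi^{-1}$ by its values at the shell endpoints. You instead rescale $v=\sqrt{2h}\,w$ and compare the integrands pointwise in $w$.

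The mechanism is the same: monotonicity of $\psi^{-1}$ plus self-similarity of the sublevel sets of $\tfrac12|v|^2$. Your version is shorter and loses nothing. In the upper bound, the paper's shell comparison matches $\psi^{-1}$ at level $R/2^{i-1}$ against level $R/2^{i+1}$, so \eqref{eq:d:ov} is applied twice; this is exactly where the $\ov\Lambda^2$ in $L$ comes from. Your pointwise comparison applies it once and gives $L=2^{N/2}\ov\Lambda$, so the stated $L$ follows a fortiori since $h_\psi$ is increasing.

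Your regularity discussion is more careful than the paper's, which simply asserts the formula for $\calI'$. The hedge for $N=1$ is unnecessary, though. Write $\psi^{-1}(s)=\int_0^s(\psi^{-1})'$; this is legitimate because $\psi^{-1}$ is monotone, continuous, $C^1$ on $(0,\infty)$, and vanishes at $0$. Tonelli then gives $F(h)=\int_0^h G(\sigma)\,d\sigma$ with $G(\sigma)=\omega_N\int_0^\sigma(\psi^{-1})'(s)\,(2(\sigma-s))^{\frac{N-2}{2}}\,ds$.

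$G$ is finite and continuous on $(0,\infty)$ for every $N\ge 1$. For $N=1$, split at $s=\sigma/2$:
\begin{itemize}
\item near $s=0$ the kernel is bounded and $(\psi^{-1})'\in L^1$;
\item near $s=\sigma$ the factor $(\psi^{-1})'$ is bounded and $(\sigma-s)^{-1/2}$ is integrable.
\end{itemize}
So no integration by parts is needed. Also, your $N=1$ formula is missing the factor $\omega_1=2$.
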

\begin{remark}
    The estimate \eqref{eq:h:lb} cannot be obtained from \eqref{eq:h:dou}, thus we impose the growth condition \eqref{eq:d:ov}. Moreover, the (A3) condition, together with (A1), (A2), implies that $\psi$ verifies \eqref{eq:d:ov} with $\ov\Lambda = 2\Lambda$. Indeed,
    \[
    \psi^{-1}(2s)=\int_0^{2s}(\psi^{-1})'(u)\,\du = 2\int_0^s (\psi^{-1})'(2u)\,\du \le 2\Lambda \int_0^s (\psi^{-1})'(u)\,\du =2\Lambda \psi^{-1}(s).
    \]
\end{remark}

\begin{proof}[Proof of Lemma \ref{lem:h}] We first investigate the well-definedness of $h_\psi$. To this end, for each $R \ge 0$, we set
\[
\calI(R):= \intr \psi^{-1} \lt(\lt( R - \frac{1}{2}|v|^2\rt)_+\rt)\,\dv.
\]
By polar coordinate, we can rewrite
\bq\label{eq:IR}
\calI(R)= \omega_N\int_0^{\sqrt{2R}} \psi^{-1}\lt(R- \frac{1}{2}r^2\rt)r^{N-1}\,\dr,
\eq
where $\omega_N=|\bbS_{N-1}|$.
Note that $\calI(0)=0$ and $\calI:[0,\infty)\to[0,\infty)$ is strictly increasing and continuously differentiable since
\[
\calI'(R)= \omega_N\int_0^{\sqrt{2R}} (\psi^{-1})'\lt(R- \frac{1}{2}r^2\rt)r^{N-1}\,\dr>0,
\]
for any $R>0$. This provides that $\calI:[0,\infty) \to [0,\infty)$ is injective and $\calI \in C^1((0,\infty))$. For $R \ge 1$, as $\psi$ is increasing due to (A1), we have
\[
\calI(R) \ge \int_{B_{\sqrt{R}}(0)} \psi^{-1}\lt(\tfrac{R}{2}\rt)\,\dv \ge \omega_N\psi^{-1}\lt(\tfrac{1}{2}\rt)R^{\frac{N}{2}} \to +\infty
\]
as $R$ tends to infinity. This suggests that $\calI$ is surjective, hence, for each $\rho \ge 0$, we can uniquely define $h_\psi(\rho)$ by $h_\psi(\rho):=\calI^{-1}(\rho)$, this proves the first statement.

To obtain a doubling type estimate \eqref{eq:h:dou}, we define for each $c>1$, $R>0$:
\bq\label{eq:Ci}
\begin{aligned}
\calC_i(c,R):=&\lt\{v \in \R^d: \tfrac{R}{c^{i+1}}\le R - \tfrac{1}{2}|v|^2 \le \tfrac{R}{c^i}  \rt\}\\
=& \lt\{v \in \R^d: R\lt(1-\tfrac{1}{c^{i}}\rt)\le\tfrac{1}{2}|v|^2 \le R\lt(1-\tfrac{1}{c^{i+1}}\rt)  \rt\},
\end{aligned}
\eq
where $i=0,1,2,\cdots$.
We notice that $v \in \calC_1(c,cR)$ if and only if  $v \in c^{\frac{1}{2}}\calC_1(c,R)$ by self similarity. We thus deduce that $|\calC_i(c,cR)|= |c^{\frac{1}{2}}\,\calC_i(c,R)| =c^{\frac{N}{2}}|\calC_i(c,R)|$. We then estimate 
\bq\label{eq:est_diadic}
\begin{aligned}
    \calI(cR) &= \sum_{i=0}^\infty \int_{\calC_i(c,cR)} \psi^{-1}((cR - \tfrac{1}{2}|v|^2)_+)\,\dv \ge \sum_{i=0}^\infty \psi^{-1}\lt(\tfrac{R}{c^i}\rt)|\calC_i(c,cR)|\\
    &=c^{\frac{N}{2}}\sum_{i=0}^\infty \psi^{-1}\lt(\tfrac{R}{c^i}\rt)|\calC_i(c,R)|
    = c^{\frac{N}{2}}\sum_{i=0}^\infty \int_{\calC_i(c,R)} \psi^{-1}\lt(\tfrac{R}{c^i}\rt)\,\dv  \ge c^{\frac{N}{2}}\calI(R).
\end{aligned}
\eq
Since $\calI$ is an inverse of $h_\psi$ and $h_\psi$ is increasing as well, we have $cR \ge h_\psi(c^\frac{N}{2}R)$. Putting $R= h_\psi(\rho)$ yields $ch_\psi(\rho) \ge h_\psi(c^{\frac{N}{2}}\rho)$. Then, switching $c:=c_*^{\frac{2}{N}}>1$ gives $h_\psi(c_*\rho)\le c_*^{\frac{2}{N}}h_\psi(\rho)$, which is equivalent to \eqref{eq:h:dou}.

To investigate the asymptotic bounds of $h_\psi$, we first consider $\tilde\rho > 1$. Inserting $(c,\rho)=(\tilde \rho, 1)$ into \eqref{eq:h:dou} gives $h_\psi(\tilde \rho) \le h_\psi(1){\tilde \rho}^{\frac{2}{N}}$. On the other hand, if $\tilde \rho < 1$, we take $(c,\rho)=(\tilde \rho^{-1},\tilde\rho)$ into \eqref{eq:h:dou} to deduce $h_\psi(1) \le (\tilde \rho)^{-\frac{2}{N}}h_\psi(\tilde \rho)$. This establishes \eqref{eq:h:asym}.

Finally, under \eqref{eq:d:ov}, we estimate $\calI$ similarly as \eqref{eq:est_diadic}:
\[
\begin{aligned}
\calI(2R) = \sum_{i=0}^\infty \int_{\calC_i(2,2R)}\psi^{-1}((2R-\tfrac{1}{2}|v|^2)_+)\,\dv &\le \sum_{i=0}^\infty \psi^{-1}\lt(\tfrac{2R}{2^i}\rt)|\calC_i(2,2R)|\\
    &\le2^{\frac{N}{2}}\Lambda^2\sum_{i=0}^\infty \psi^{-1}\lt(\tfrac{R}{2^{i+1}}\rt)|\calC_i(2,R)| \le 2^{\frac{N}{2}}\Lambda^2\calI(R).
\end{aligned}
\]
Similarly as above, we set $R=h_\psi(\rho)$ after taking $h_\psi$ on both sides to find that
\[
2h_\psi(\rho) \le h_\psi(2^{\frac{N}{2}}\Lambda^2 \rho).
\]
We then normalize $\rho = \frac{\tilde \rho}{2^{\frac{N}{2}}\Lambda^2}$ to obtain \eqref{eq:h:lb}. This concludes the proof.
\end{proof}

Based upon the macroscopic enthalpy $h_\psi$,  we define the macroscopic pressure and entropy as:
\begin{equation}\label{eq:P,Phi}
P_\psi(\rho):=\rho h_\psi(\rho)-\Phi_\psi(\rho),\qquad 
\Phi_\psi(\rho):=\int_0^\rho h_\psi(s)\,ds.
\end{equation}
\begin{remark}
Indeed, the relation between $\psi^{-1}$ and $h_\psi$ is equivalent to the equilibrium profile at kinetic level and the diffusion law at macrscopic law in  \cite{DMOS07} for the diffusion limit.
\end{remark}
We then deduce the elementary behavior of macroscopic pressure and entropy.

\begin{corollary}\label{cor:sh(s)}
Under (A1)--(A2), we have $\Phi_\psi:[0,\infty)\to [0,\infty)$ is a strictly increasing and convex $\calC^2$ function satisfying $\Phi_\psi(0)=0$. $P_\psi:[0,\infty)\to [0,\infty)$ is a $\calC^1$ strictly increasing function satifying $P_\psi(0)=0$. Moreover, it holds that $\rho h_\psi(\rho) \ge \Phi_\psi(\rho)$ and
\[
P_\psi(\rho) \le \rho h_\psi(\rho) \le (2^{\frac{2}{N}+1}) \Phi_\psi(\rho).
\]
If $\psi$ additionally verifies \eqref{eq:d:ov}, then 
\[
P_\psi(\rho) \ge \tfrac{\rho}{L} h_\psi\lt(\tfrac{\rho}{L}\rt),
\]
with $L=2^{\frac{N}{2}}\ov \Lambda^2>1$.

\end{corollary}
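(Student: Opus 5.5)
The plan is to read everything off from Lemma \ref{lem:h}: $h_\psi$ is continuous, strictly increasing, $h_\psi(0)=0$, and satisfies \eqref{eq:h:dou} and \eqref{eq:h:lb}.

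\emph{Regularity and monotonicity.} Since $h_\psi$ is continuous on $[0,\infty)$ (continuity at $0$ follows from \eqref{eq:h:asym}) and $C^1$ on $(0,\infty)$, the function $\Phi_\psi(\rho)=\int_0^\rho h_\psi$ is $C^1$ with $\Phi_\psi'=h_\psi$. It is $C^2$ on $(0,\infty)$ with $\Phi_\psi''=h_\psi'\ge0$. Hence $\Phi_\psi(0)=0$, $\Phi_\psi$ is convex because $h_\psi$ is increasing, and $\Phi_\psi$ is strictly increasing because $h_\psi>0$ on $(0,\infty)$. (At $\rho=0$ the $C^2$ claim should be read on $(0,\infty)$, or one-sided.) For $P_\psi=\rho h_\psi-\Phi_\psi$ I will use the Stieltjes-type identity
\[
P_\psi(\rho)=\int_0^\rho \bigl(h_\psi(\rho)-h_\psi(s)\bigr)\,\ds=\int_0^{h_\psi(\rho)} \bigl|\{s\in[0,\rho]: h_\psi(s)<\tau\}\bigr|\,d\tau .
\]
This shows directly that $P_\psi\ge0$, $P_\psi(0)=0$, and that $P_\psi$ is strictly increasing: for $\rho_1<\rho_2$ the integrand $h_\psi(\rho)-h_\psi(s)$ increases with $\rho$, and the domain grows. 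Continuity at $0$ holds since $0\le P_\psi\le\rho h_\psi(\rho)\to0$. On $(0,\infty)$, $P_\psi'=\rho h_\psi'$ is continuous. The same identity gives $\rho h_\psi(\rho)-\Phi_\psi(\rho)=P_\psi\ge0$, so $P_\psi\le\rho h_\psi$ trivially, since $\Phi_\psi\ge0$.

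\emph{Upper bound $\rho h_\psi(\rho)\le 2^{\frac2N+1}\Phi_\psi(\rho)$.} I will use \eqref{eq:h:dou} with $c=2$. Restricting the integral defining $\Phi_\psi$ to $[\rho/2,\rho]$ and using monotonicity gives
\[
\Phi_\psi(\rho)\ge \tfrac{\rho}{2}h_\psi(\tfrac{\rho}{2})\ge \tfrac{\rho}{2}\,2^{-\frac2N}h_\psi(\rho),
\]
which is exactly $\rho h_\psi(\rho)\le 2^{\frac2N+1}\Phi_\psi(\rho)$.

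\emph{Lower bound on $P_\psi$ under \eqref{eq:d:ov}.} Splitting $P_\psi(\rho)=\int_0^\rho(h_\psi(\rho)-h_\psi(s))\,\ds$ and keeping only $s\in[0,\rho/L]$, where $h_\psi(s)\le h_\psi(\rho/L)$, gives
\[
P_\psi(\rho)\ge \tfrac{\rho}{L}\bigl(h_\psi(\rho)-h_\psi(\tfrac{\rho}{L})\bigr)\ge \tfrac{\rho}{L}h_\psi(\tfrac{\rho}{L})
\]
by \eqref{eq:h:lb}, $h_\psi(\rho)\ge 2h_\psi(\rho/L)$.

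The only delicate point is the regularity bookkeeping at $\rho=0$, namely whether $h_\psi'$ extends there. I expect to handle it by stating the $C^2$ and $C^1$ claims on $(0,\infty)$ together with continuity at $0$, which is all the later arguments need.
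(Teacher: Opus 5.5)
Your proposal is correct and follows the paper's proof. The lower bound on $P_\psi$ is identical. Your single-shell estimate $\Phi_\psi(\rho)\ge\frac{\rho}{2}h_\psi(\frac{\rho}{2})\ge 2^{-\frac{2}{N}}\frac{\rho}{2}h_\psi(\rho)$ is just the first term of the paper's dyadic sum; the full sum yields the slightly sharper constant $2^{\frac{2}{N}+1}-1$. Your caveat that the $\calC^2$ claim for $\Phi_\psi$ should only be read on $(0,\infty)$ is justified: for the power law, $\Phi_\psi\propto\rho^\gamma$ with $\gamma<2$ has $\Phi_\psi''$ unbounded at $0$.

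One small correction. \eqref{eq:h:asym} only bounds $h_\psi$ from below near $0$, so it does not give continuity of $h_\psi$ at $0$. Derive it instead from $\rho=\calI(h_\psi(\rho))$ together with $\calI(c)>0$ for every $c>0$, i.e.\ $h_\psi=\calI^{-1}$ is the inverse of a continuous increasing bijection of $[0,\infty)$.
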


\begin{proof}
The qualitative properties of $\Phi_\psi$ and $P_\psi$ are immediate from the formula \eqref{eq:P,Phi} and Lemma \ref{lem:h}. It is also immediate that $0\le \Phi_\psi(\rho)=\int_0^\rho h_\psi(s)\,\ds \le \rho h_\psi(\rho)$, $P_\psi(\rho)\le \rho h_\psi(\rho)$.

To investigate an upper bound of $\rho h_\psi(\rho)$, we split $\Phi_\psi(\rho)$ in a dyadic manner and apply Lemma \ref{lem:h} to see that
\[
\int_0^\rho h_\psi(s)\,ds = \sum_{k=1}^\infty \int_{\rho/2^k}^{\rho/2^{k-1}} h_\psi(s)\,ds
\ge \sum_{k=1}^\infty \frac{\rho}{2^k}\,h_\psi\!\big(\tfrac{\rho}{2^k}\big).
\]
By recursively applying \eqref{eq:h:dou}, we have $h_\psi(\rho/2^k)\ge 2^{-\frac{2k}{N}} h_\psi(\rho)$, which yields
\[
\Phi_\psi(\rho)\ge \sum_{k=1}^\infty \frac{\rho}{2^k}\cdot \frac{h_\psi(\rho)}{2^{\frac{2k}{N}}}
= \frac{\rho h_\psi(\rho)}{2^{\frac{2}{N}+1}-1}.
\]
 Lastly, under \eqref{eq:d:ov}, we recall \eqref{eq:h:lb} to find that
 \[
 P_\psi(\rho) = \int_0^\rho (h_\psi(\rho)-h_\psi(s))\,\ds \ge \int_0^{\frac{\rho}{L}}(h_\psi(\rho) - h_\psi(s))\,\ds \ge \frac{\rho}{L} (h_\psi(\rho)-h_{\psi}(\tfrac{\rho}{L})) \ge \tfrac{\rho}{L} h_\psi\lt(\tfrac{\rho}{L}\rt)
 \]
since $h_\psi$ is increasing as desired. 
\end{proof}

For the readers convenience, we conclude this section by presenting the variables concerning the kinetic and fluid laws.

\begin{table}[h]%\label{table1}
\centering
\begin{tabular}{|c||c|c|c|}
\hline
 & Diffusion/Pressure & Enthalpy & Entropy\\ \hline
Kinetic & $L_\psi$ & $\psi$ &$\Psi$ \\ \hline
Fluid & $P_\psi$ & $h_\psi$ & $\Phi_\psi$ \\ \hline
\end{tabular}
\caption{Mesoscopic and macroscopic laws.}
%\label{table}
\end{table}

\section{Admissible Pressures}\label{sec:ap}

This section concerns  concrete examples of macroscopic pressures $P_\psi(\rho)$ that arise from $\psi$ satisfying (A1)--(A3). From Lemma \ref{lem:h} and Corollary \ref{cor:sh(s)}, we have
\[
P_\psi(\rho) \lesssim \rho^{1+\frac{2}{N}}\quad \text{for large}\,\,\,\rho,\quad P_\psi(\rho)\gtrsim \rho^{1+\frac{2}{N}}\quad \text{for small}\,\,\, \rho.
\]
We establish that a broad class of pressure laws are indeed admissible in the class of $P_\psi$. We first investigate the case of power law pressures.

\begin{lemma}[Power law pressure]\label{prop:pl}
    Let $m>1$ and $\theta_m>0$ be given with $L_{\psi_m}(s):=\theta_m s^m$. Then we have
    \[
    P_{\psi_m}(\rho)=\theta \rho^\gamma
    \]
    where
    \bq\label{eq:gm}
    \gamma =\gamma(m) := 1 + \lt(\frac{1}{m-1}+\frac{N}{2}\rt)^{-1},
    \eq
    with $\theta>0$ satisfying \eqref{eq:theta}. Conversely, for any given $\gamma \in (1,1+ \frac{2}{N})$ and  $\theta>0$, there exist unique $m$ and $\theta_m$, explicitly given as \eqref{eq:m_g}--\eqref{eq:theta_m}, such that $P_{\psi_m}(\rho)=\theta \rho^\gamma$.
\end{lemma}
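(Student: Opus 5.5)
The computation is explicit and uses scaling. From $L_{\psi_m}(s)=\theta_m s^m$ I first recover $\psi_m$. The relation $L_\psi=s\psi-\Psi$ gives $L_\psi'(s)=s\psi'(s)$. Hence $\psi_m'(s)=\theta_m m s^{m-2}$, and with $\psi_m(0)=0$ this yields
\[
\psi_m(s)=\frac{\theta_m m}{m-1}s^{m-1}, \qquad \psi_m^{-1}(\sigma)=(a\sigma)^{\frac{1}{m-1}}, \qquad a:=\frac{m-1}{\theta_m m}.
\]
Substituting this into the definition \eqref{def:h}, as in \eqref{eq:IR}, I write $\calI(R)=\omega_N\int_0^{\sqrt{2R}}\bigl(a(R-\tfrac12 r^2)\bigr)^{\frac1{m-1}}r^{N-1}\,\dr$. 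The change of variables $r=\sqrt{2R}\,t$ gives $\calI(R)=K\,R^{\frac1{m-1}+\frac N2}$. Here
\[
K=\omega_N a^{\frac1{m-1}}2^{\frac N2}\int_0^1(1-t^2)^{\frac1{m-1}}t^{N-1}\,dt=\omega_N a^{\frac1{m-1}}2^{\frac N2-1}B\Bigl(\tfrac N2,\tfrac1{m-1}+1\Bigr),
\]
which is finite and positive since $m>1$.

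With $\beta:=\frac1{m-1}+\frac N2$, inverting gives $h_{\psi_m}(\rho)=(\rho/K)^{1/\beta}$. Then $\Phi_{\psi_m}(\rho)=\frac{\beta}{\beta+1}K^{-1/\beta}\rho^{1+1/\beta}$, and
\[
P_{\psi_m}(\rho)=\rho h_{\psi_m}-\Phi_{\psi_m}=\frac{1}{\beta+1}K^{-1/\beta}\rho^{1+1/\beta}.
\]
This is $\theta\rho^\gamma$ with $\gamma=1+\beta^{-1}$, matching \eqref{eq:gm}, and with $\theta=\frac{1}{\beta+1}K^{-1/\beta}$; I take this expression as \eqref{eq:theta}.

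For the converse, the map $m\mapsto\gamma(m)$ is a strictly decreasing bijection from $(1,\infty)$ onto $(1,1+\frac2N)$. Indeed, $\frac1{m-1}$ runs from $\infty$ down to $0$, so $\beta$ runs over $(\frac N2,\infty)$. Solving $\frac1{\gamma-1}=\frac1{m-1}+\frac N2$ gives the explicit formula
\[
m=1+\Bigl(\frac1{\gamma-1}-\frac N2\Bigr)^{-1},
\]
which is \eqref{eq:m_g}; it is well defined because $\gamma<1+\frac2N$.

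With $m$ fixed, I determine $\theta_m$ from $\theta$. The constant $K$ depends on $\theta_m$ only through the factor $a^{\frac1{m-1}}\propto\theta_m^{-\frac1{m-1}}$. Therefore $\theta=\frac1{\beta+1}K^{-1/\beta}=C_{m,N}\,\theta_m^{\frac{1}{(m-1)\beta}}$ for an explicit constant $C_{m,N}>0$, and the exponent $\frac{1}{(m-1)\beta}$ is positive. This is a strictly increasing bijection of $(0,\infty)$ onto itself, so $\theta_m=(\theta/C_{m,N})^{(m-1)\beta}$ is uniquely determined; this is \eqref{eq:theta_m}.

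Uniqueness of the pair $(m,\theta_m)$ follows because $\gamma$ determines $m$ uniquely and $\theta$ then determines $\theta_m$ uniquely. The only real obstacle is bookkeeping: tracking the constants through the Beta integral and making sure that the exponent of $\theta_m$ is nonzero. The latter holds since $m>1$, so there is no degenerate case.
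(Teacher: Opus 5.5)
Your proof is correct and follows the same route as the paper: polar coordinates plus a Beta integral give $\calI(R)=K R^{\frac{1}{m-1}+\frac{N}{2}}$, hence $h_{\psi_m}(\rho)\propto\rho^{\gamma-1}$ and $P_{\psi_m}=\theta\rho^\gamma$, and the converse comes from inverting $m\mapsto\gamma(m)$ and then solving for $\theta_m$. Two minor remarks: first, $m\mapsto\gamma(m)$ is strictly \emph{increasing}, not decreasing, which is harmless for bijectivity. Second, your constant $K$ is the correct one. The paper's intermediate display writes $(2h_m(\rho))^{\frac{N}{2}+\frac{1}{m-1}}$ where $h_m(\rho)^{\frac{1}{m-1}}(2h_m(\rho))^{\frac{N}{2}}$ is meant, so its explicit constants in \eqref{eq:theta}--\eqref{eq:theta_m} differ from yours by a power of $2$; this does not affect the content of the lemma.
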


\begin{proof}
    Notice that $L_{\psi_m}(s)=\theta_m s^m$ is equivalent to $\psi_m(s) = \frac{m\theta_m}{(m-1)}s^{m-1}$, which gives
    \[
    \psi_m^{-1}(s)=\lt(\frac{m-1}{m\theta_m}\rt)^{\frac{1}{m-1}}s^{\frac{1}{m-1}}.
    \]
    We first characterize $h_{m}=h_{\psi_m}$ and determine the values of $m$ and $\theta_m$ to obtain $P_{\psi_m}(\rho)=\theta \rho^\gamma$.

    We recall \eqref{def:h} and use polar coordinate as \eqref{eq:IR} to find
    \[
    \rho = \omega_N \int_0^{\sqrt{2h_m(\rho)}} \lt(\frac{m-1}{m\theta_m}\rt)^{\frac{1}{m-1}}\lt(h_m(\rho) - \frac{1}{2}r^2\rt)^{\frac{1}{m-1}}r^{N-1}\,dr.
    \]
    By changing coordinate $r=\sqrt{2h_m(\rho)s}$, we have $r^{N-1}dr=\tfrac{1}{2}(2h_m(\rho))^{\frac{N}{2}}s^{\frac{N}{2}-1}\ds$, so that
    \[
    \rho = \frac{\omega_N}{2} \lt(\frac{m-1}{m\theta_m}\rt)^{\frac{1}{m-1}} (2h_m(\rho))^{\frac{N}{2}+\frac{1}{m-1}}\int_0^1(1-s)^{\frac{1}{m-1}}s^{\frac{N}{2}-1}\,\ds.
    \]
    We thus define $\gamma>1$ as in \eqref{eq:gm} so that $\frac{\gamma}{\gamma-1}=\frac{N}{2}+\frac{m}{m-1}$. By noticing $\omega_N=\frac{2\pi^{{N}/{2}}}{\Gamma(\frac{N}{2})}$ and
    \[
    \int_0^1 (1-s)^{\frac{1}{m-1}}s^{\frac{N}{2}-1}\,\ds = B\lt(\frac{m}{m-1},\frac{N}{2}\rt) = \frac{\Gamma(\frac{m}{m-1})\Gamma(\frac{N}{2})}{\Gamma(\frac{\gamma}{\gamma-1})},
    \]
    we find
    \[
    h_m(\rho)=\frac{1}{2}\rho^{\gamma-1}\lt[\frac{\pi^{\frac{N}{2}}\Gamma(\frac{m}{m-1})}{\Gamma(\frac{\gamma}{\gamma-1})}\rt]^{-(\gamma-1)} \lt(\frac{m\theta_m}{m-1}\rt)^{\frac{\gamma-1}{m-1}}.
    \]
    This shows that $P_{\psi_m}(\rho)=\theta \rho^\gamma$ for some $\theta$, where $\theta=\theta(m, \theta_m,N)$ is defined as
    \bq\label{eq:theta}
    \frac{2\theta \gamma}{\gamma-1}=\lt[\frac{\pi^{\frac{N}{2}}\Gamma(\frac{m}{m-1})}{\Gamma(\frac{\gamma}{\gamma-1})}\rt]^{-(\gamma-1)}\lt(\frac{m\theta_m}{m-1}\rt)^{\frac{\gamma-1}{m-1}}.
    \eq

    To conclude, we find $m>1$ and $\theta_m>0$ to satisfy $P_{\psi_m}=\theta\rho^\gamma$ for any given $\theta>0$, $\gamma \in (1,1+\frac{2}{N})$.  We uniquely choose $m>1$ satisfying \eqref{eq:gm}, in other words,
    \bq\label{eq:m_g}
    m:= 1 + \lt(\frac{1}{\gamma-1}-\frac{N}{2}\rt)^{-1}.
    \eq
    To determine $\theta_m>0$, we recall the so-called ``degree of freedom" to simplify the notation:
    \bq\label{eq:dfree}
    \frac{d}{2}:=\frac{1}{\gamma-1}-\frac{N}{2}.
    \eq
    Thus, \eqref{eq:theta} can be reformulted as
    \bq\label{eq:theta_m}
    \theta_m=\frac{2}{d+2} \lt[\lt(\frac{2\theta\gamma}{\gamma-1}\rt)^{\frac{1}{\gamma-1}} \frac{\pi^{N/2}\Gamma(\frac{d}{2}+1)}{\Gamma(\frac{\gamma}{\gamma-1})}\rt]^{\frac{2}{d}}.
    \eq
\end{proof}

\begin{remark}\label{rmk:psi_m}
    Throughout the paper, we always denote
    \[
    \psi_m(s):=\frac{m\theta_m}{m-1}s^{m-1}.
    \]
    If $\theta_m$ is given as \eqref{eq:theta_m}, then we can formally characterize its asymptotic limit as $m \to \infty$. By writing 
    \bq\label{eq:c1c2}
      c_{1,m} := \frac{2\gamma\theta}{\gamma-1},\quad c_{2,m} := \lt[\left(c_{1,m}\right)^{\frac{1}{\gamma-1}} \lt(\frac{\pi^{N/2}\Gamma\left(\frac{d}{2}+1\right)}{\Gamma(\frac{\gamma}{\gamma-1})}\rt)\rt]^{-1},
\eq
with $\gamma$, $d$ defined as \eqref{eq:gm} and \eqref{eq:dfree} respectively, 
  $\theta_m$ in \eqref{eq:theta_m} can be rewritten as
  \[
  \theta_m = \frac{m-1}{m}(c_{2,m})^{-\frac{1}{m-1}}.
  \]
As $m \to \infty$, one can notice that $P_{\psi_m}(\rho)=\theta\rho^{\gamma(m)} \to \theta \rho^{1+ \frac{2}{N}}$ and 
$c_{2,m} \to  c_{2,\infty}$ where $c_{2,\infty}=[(N+2)\theta]^{-\frac{N}{2}}\frac{\Gamma(\frac{N}{2}+1)}{\pi^{\frac{N}{2}}}$. In particular, since $\psi_m(s) = (\frac{s}{c_{2,m}})^{m-1}$, we have
    \[
    \psi_m(s) \to \psi_{\infty}(s):=\begin{cases}
    +\infty \quad &\text{if}\quad s> c_{2,\infty} \\
        \frac{1}{2} \quad &\text{if}\quad s= c_{2,\infty}\\
        0\quad &\text{if}\quad s< c_{2,\infty}.
        \end{cases}
    \]
\end{remark}

In fact, not only the power law pressure but pressure with different exponents depending on the density near the origin or at infinity are admissible in the class of $P_\psi$.

\begin{lemma}\label{lem:different}
    Let $1<\alpha <\beta <+\infty$. We define a class of functions $\calA_{\alpha,\beta}$ as
    \[
    \calA_{\alpha,\beta}:=\lt\{\psi \text{ verifies (A1)--(A2), \eqref{eq:d:ov} satisfying }(\text{S-}\alpha),\, (\text{S-}\beta) \rt\}
    \]
   where
   \[%\label{eq:s1}]
   \begin{aligned}
   &(\text{S-}\alpha)\quad \underline{c}s^{\frac{1}{\alpha-1}} \le \psi^{-1}(s) \le \ov{c}s^{\frac{1}{\alpha-1}}\quad\text{for all}\,\,\,s \le M_\alpha \\
   %\eq
   %\bq\label{eq:s2}
   &(\text{S-}\beta)\quad \underline{c}s^{\frac{1}{\beta-1}} \le \psi^{-1}(s) \le \ov{c}s^{\frac{1}{\beta-1}} \quad\text{for all}\,\,\,s \ge M_\beta,
   \end{aligned}
   \]
    for some $\ov{c} \ge \underline{c}>0$, with $0<M_\alpha <M_\beta$. Then, for any $\psi \in \calA_{\alpha, \beta}$, we have the asymptotic behaviour of $P_\psi(\rho)$ characterized as follows: if $P_\psi(\rho)$ is small (resp. large), 
    \[
    P_{\psi}(\rho) \sim \rho^{\gamma(\alpha)},\quad\text{resp.}\quad P_{\psi}(\rho) \sim \rho^{\gamma(\beta)},
    \]
    where $\gamma(\cdot)$ is defined as \eqref{eq:gm}.
\end{lemma}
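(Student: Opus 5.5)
The idea is to reduce everything to the behaviour of the enthalpy $h_\psi$. Once we know $h_\psi(\rho)\sim \rho^{\gamma(\alpha)-1}$ for small $\rho$ and $h_\psi(\rho)\sim\rho^{\gamma(\beta)-1}$ for large $\rho$, Corollary \ref{cor:sh(s)} transfers this to $P_\psi$. Indeed, the two-sided bound
\[
\tfrac{\rho}{L} h_\psi\lt(\tfrac{\rho}{L}\rt)\le P_\psi(\rho)\le \rho h_\psi(\rho)
\]
holds, since $\psi$ verifies \eqref{eq:d:ov}. A power-type $h_\psi$ therefore gives $P_\psi(\rho)\sim\rho^{\gamma}$, with constants depending on $L$. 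Also, $P_\psi$ is strictly increasing with $P_\psi(0)=0$ and $P_\psi(\rho)\to\infty$. Hence ``$P_\psi$ small/large'' is equivalent to ``$\rho$ small/large'', so it suffices to work in $\rho$.

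\textbf{Small $\rho$.} Here I would compare $\calI(R)$ with the corresponding integral for the pure power $\psi_\alpha^{-1}(s)=s^{1/(\alpha-1)}$. Since $h_\psi$ is increasing and $h_\psi(0)=0$, for $\rho$ small we have $R=h_\psi(\rho)\le M_\alpha$. The integrand in \eqref{eq:IR} only involves arguments $R-\frac12 r^2\in[0,R]\subset[0,M_\alpha]$, so (S-$\alpha$) applies pointwise. This gives
\[
\underline c\,\calI_\alpha(R)\le \calI(R)\le \ov c\,\calI_\alpha(R).
\]
From the computation in Lemma \ref{prop:pl}, with $m-1=\alpha-1$, we have $\calI_\alpha(R)=C_{\alpha,N}R^{\frac{1}{\alpha-1}+\frac N2}=C R^{\frac{1}{\gamma(\alpha)-1}}$. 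Inverting yields $h_\psi(\rho)\sim\rho^{\gamma(\alpha)-1}$ for $\rho\le \calI(M_\alpha)$.

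\textbf{Large $\rho$.} This is the main obstacle, because the integrand now sees arguments below $M_\beta$, near the edge $|v|^2\approx 2R$, where (S-$\beta$) fails. I would split $\calI(R)$ into the region $\{R-\frac12|v|^2\ge M_\beta\}$ and its complement.

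On the first region, (S-$\beta$) gives the integrand comparable to $(R-\frac12|v|^2)^{1/(\beta-1)}$. For the lower bound, restricting to $R\ge 2M_\beta$ and to the ball where $R-\frac12|v|^2\ge R/2$ already gives $\calI(R)\gtrsim R^{\frac1{\beta-1}+\frac N2}$, by the same scaling as in the proof of Lemma \ref{lem:h}. For the upper bound, the first region contributes at most $\ov c\,\calI_\beta(R)\lesssim R^{\frac1{\beta-1}+\frac N2}$.

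The complement is a shell of volume $\lesssim R^{\frac{N}{2}-1}M_\beta$, on which the integrand is at most $\psi^{-1}(M_\beta)$. Its contribution is therefore $O(R^{N/2-1})$, which is lower order than $R^{\frac{1}{\beta-1}+\frac N2}$ as $R\to\infty$. This gives $\calI(R)\sim R^{1/(\gamma(\beta)-1)}$ for $R$ large, hence $h_\psi(\rho)\sim\rho^{\gamma(\beta)-1}$ for $\rho$ large.

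\textbf{Transfer to $P_\psi$.} Plugging these two asymptotics into the sandwich bound finishes the argument. The factor $L$ only changes constants, and for $\rho$ large, $\rho/L$ is still large. The intermediate range of $\rho$ is not needed for the stated asymptotics.
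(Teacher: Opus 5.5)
Your argument is correct, and its skeleton is the paper's: two-sided power bounds on $h_\psi$ in each regime, then transfer to $P_\psi$ via the sandwich $\tfrac{\rho}{L}h_\psi(\tfrac{\rho}{L})\le P_\psi(\rho)\le\rho h_\psi(\rho)$ from Corollary \ref{cor:sh(s)}. Where you differ is in how $\calI(R)$ is estimated. The paper never compares integrands. Monotonicity of $\psi^{-1}$ alone gives
\[
\tfrac{\omega_N}{N}R^{\frac N2}\,\psi^{-1}\lt(\tfrac R2\rt)\le \calI(R)\le \tfrac{\omega_N}{N}(2R)^{\frac N2}\,\psi^{-1}(R),
\]
and \eqref{eq:d:ov} then turns $\psi^{-1}(R/2)$ into $\psi^{-1}(R)/\ov\Lambda$. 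So (S-$\alpha$) or (S-$\beta$) is only ever evaluated at the single point $R=h_\psi(\rho)$, and both regimes are handled identically in two lines. In particular, the edge layer $\{0<R-\tfrac12|v|^2<M_\beta\}$ that you single out as the main obstacle never arises. Your large-$\rho$ upper bound could simply be replaced by $\psi^{-1}(R-\tfrac12|v|^2)\le\psi^{-1}(R)\le\ov c\,R^{1/(\beta-1)}$ on the whole support. Your integrand-wise comparison with the pure power profile, plus the $O(R^{N/2-1})$ shell estimate, costs a bit more work but gives finer information. In the small regime the ratio $\calI/\calI_\alpha$ is pinched exactly between $\underline c$ and $\ov c$. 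Your bounds on $h_\psi$ do not use \eqref{eq:d:ov} at all; it enters only in the transfer to $P_\psi$. And because you show the edge contribution is of lower order, your argument would upgrade to an exact asymptotic $h_\psi(\rho)\sim c'\rho^{\gamma(\beta)-1}$ if (S-$\beta$) were strengthened to $\psi^{-1}(s)/s^{1/(\beta-1)}\to c$. The paper's doubling-based bounds lose the factors $\ov\Lambda$ and $2^{N/2}$, so they cannot deliver that.
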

\begin{proof}
First, we investigate a lower bound of $h_\psi(\rho)$. Since $h_\psi(\rho)$ is defined as an inverse function of $\calI$ in \eqref{eq:IR} and $\psi^{-1}$ is increasing thanks to (A1), we immediately have
\[
\rho = \omega_N\int_0^{\sqrt{2h_\psi(\rho)}}\psi^{-1}(h_\psi(\rho)-\tfrac{1}{2}r^2)r^{N-1}\,\dr \le \frac{\omega_N}{N}\psi^{-1}(h_\psi(\rho))(2h_\psi(\rho))^{\frac{N}{2}}.
\]
We apply upper bounds of $\psi^{-1}$ to obtain
\[
h_\psi(\rho) \gtrsim \rho^{\gamma(\alpha)-1}\quad \text{resp.}\quad h_\psi(\rho) \gtrsim \rho^{\gamma(\beta)-1}
\]
if $h_\psi(\rho)$ is small (resp. large).

On the other hand, to obtain a upper bound of $h_\psi(\rho)$, we recall \eqref{eq:Ci} and proceed similarly as in the proof of Lemma \ref{lem:h}. We observe that $h_\psi(\rho)$ satisfies
    \[
    \rho = \int \psi^{-1}((h_\psi(\rho)-\tfrac{1}{2}|v|^2)_+)\,\dv \ge \psi^{-1}(\tfrac{h_\psi(\rho)}{2})\lt|\calC_0\lt(2,h_\psi(\rho)\rt)\rt|,
    \]
with
$|\calC_0(2,R)|= \frac{\omega_N}{N}R^{\frac{N}{2}}$. This provides
\[
\rho \ge \frac{\omega_N}{N \ov{\Lambda}}\psi^{-1}(h_\psi(\rho))(h_\psi(\rho))^{\frac{N}{2}}.
\]
When the $h_\psi(\rho)$ is large (resp. small), we have
\[
h_\psi(\rho) \lesssim \rho^{\gamma(\beta)-1}\quad (\text {resp.}\,\,\,  h_\psi(\rho)\lesssim \rho^{\gamma(\alpha)-1}).
\]
Since $P_\psi(\rho) \sim \rho h_\psi(\rho)$ by Corollary \ref{cor:sh(s)}, smallness (resp. largeness) of $h_\psi(\rho)$ is equivalent to that of $P_\psi(\rho)$, and we deduce the asymptotic behaviour of pressures in these two extreme regimes, thus we conclude the proof.  
\end{proof}

\begin{remark}
    Obviously, the class of diffusion laws $\calA_{\alpha, \beta}$ defined in Lemma \ref{lem:different} is non empty as one can easily construct $\psi \in \calA_{\alpha,\beta}$ by connecting two smooth curves which behave as $s^{\alpha}$ near the origin and $s^{\beta}$ in the far field. 
\end{remark}

Lastly, in order to close the estimate for the relative entropy below in Lemma \ref{olive}, we further impose $h_\psi$ satisfy the following
\[
\text{(A5)}\quad \rho |h_\psi''(\rho)| \le C h_\psi'(\rho)\qquad \text{for all}\,\,\, \rho>0
\]%\end{equation}
for some $C>0$. It is evident to see that this inequality holds for $\psi(s)=\theta s^m$ with $m>1$. It also holds if $h_\psi''$ behaves as a power law of $\rho$ near the origin and the far field.

\section{Properties of the associated Maxwellians}\label{sec:M}
In this section, we study the properties of the local equilibria of the collision operator in \eqref{main_eq}:
\begin{equation}\label{eq:Q}
\mathcal Q_\psi(f):=\nabla_v\cdot\big(\nabla_v L_\psi(f) + (v-u_f)f\big).
\end{equation}
Set the admissible class
\begin{equation}\label{eq:A}
\mathcal A(\mathbb R^N):=\big\{f\in L^1_+(\mathbb R^N):\ |v|^2 f + \Psi(f)\in L^1(\mathbb R^N)\big\}.
\end{equation}
For fixed $f\in\mathcal A(\mathbb R^N)$, we consider the Cauchy problem
\begin{equation}\label{eq:ctrl}
\begin{cases}
\partial_t g = \nabla_v\cdot\big(\nabla_v L_\psi(g) + (v-u_f)g\big),\\
g(0,\cdot)=f(\cdot).
\end{cases}
\end{equation}
Denoting $V(v):=\tfrac12|v-u_f|^2$, the equation \eqref{eq:ctrl} can be rewritten as
\begin{equation}\label{eq:ct}
\partial_t g = \nabla_v\cdot\big(g\nabla_v(\psi(g)+V)\big).
\end{equation}
Define the free energy and information (dissipation) term as:
\bq\label{eq:F,I}
\mathcal F(g):=\int_{\mathbb R^N} \Psi(g) + V g\,dv,\qquad 
I(g):=\int_{\mathbb R^N} g\big|\nabla_v(\psi(g)+V)\big|^2\,dv.
\eq
Then, one can formally obtain the dissipation identity
\begin{equation}\label{eq:diss:g}
\frac{d}{dt}\mathcal F(g) = - I(g) \le 0.
\end{equation}
Classical theory (see e.g. \cite{CJMTU01}) provides existence and convergence toward its long-time asymptotic (hence, steady state) $g_\infty$; we state several standard results below.

\begin{lemma}\label{lem:a}
Let $g_\infty$ be the long-time limit of \eqref{eq:ctrl}. Then, we have
\begin{enumerate}
\item (Mass and momentum) \[
\int_{\mathbb R^N}\begin{pmatrix}1\\ v\end{pmatrix} g_\infty\,dv = \begin{pmatrix}\rho_f\\ \rho_f u_f\end{pmatrix}.
\]
\item (Second moment)
\bq\label{eq:sm}
\int_{\mathbb R^N} (v-u_f)\otimes(v-u_f)\,g_\infty\,dv = \Big(\int_{\mathbb R^N} L_\psi(g_\infty)\,dv\Big)\,I_N.
\eq
\item (Explicit profile) \[
g_\infty(v)=\psi^{-1}\Big(\big(h_\psi(\rho_f)-\tfrac12|v-u_f|^2\big)_+\Big).
\]
\item (Constitutive relation)
\bq\label{eq:cr}
\rho_f h_\psi(\rho_f) = \int_{\mathbb R^N} L_\psi(g_\infty)\,dv + \int_{\mathbb R^N}\Big(\tfrac12|v-u_f|^2 g_\infty + \Psi(g_\infty)\Big)\,dv.
\eq
\item (Enthalpy relation)
\bq\label{eq:er}
h_\psi(\rho_f) = \frac{d}{d\rho_f}\int_{\mathbb R^N}\Big(\tfrac12|v-u_f|^2 g_\infty + \Psi(g_\infty)\Big)\,dv.
\eq
\end{enumerate}
\end{lemma}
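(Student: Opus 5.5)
We would prove the five items in order, starting from conservation along the flow \eqref{eq:ctrl}. For (1), integrate \eqref{eq:ctrl} against $1$ and $v$. The mass is conserved because the right-hand side is in divergence form. For the momentum,
\[
\frac{d}{dt}\intr v g\,\dv = -\intr \big(\nabla_v L_\psi(g) + (v-u_f)g\big)\,\dv = -\big(\rho_g u_g - \rho_f u_f\big),
\]
since $\intr \nabla_v L_\psi(g)\,\dv=0$. Because $\rho_g=\rho_f$, the quantity $m_g-m_f$ decays exponentially to $0$. Its initial value is $0$, so it vanishes identically. Passing to the limit $t\to\infty$, using the moment and $L^1$ convergence supplied by \cite{CJMTU01}, gives (1) for $g_\infty$.

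For (3), a steady state with finite free energy has vanishing dissipation, $I(g_\infty)=0$ by \eqref{eq:diss:g}. Hence $\psi(g_\infty)+V$ is constant on each connected component of $\{g_\infty>0\}$. Since $\psi$ is invertible by (A1) and $\psi(0)=0$ by (A2), and since the free energy minimizer under a mass constraint is the global profile (the standard characterization in \cite{CJMTU01}), this yields
\[
g_\infty=\psi^{-1}\big((C-V)_+\big).
\]
The mass constraint from (1) together with the uniqueness in Lemma \ref{lem:h} forces $C=h_\psi(\rho_f)$. Conversely, the momentum identity in (1) is consistent with this profile by radial symmetry about $u_f$. \textbf{The main obstacle is this step}: excluding non-connected supports or distinct constants on different components. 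We would handle it through the free energy minimization characterization, namely that $g_\infty$ is the unique minimizer of $\mathcal F$ at fixed mass, obtained by convexity of $\Psi$, rather than by the ODE argument alone.

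For (2), test the stationary equation with $(v-u_f)_i(v-u_f)_j$. Equivalently, use the pointwise identity
\[
\nabla_v L_\psi(g_\infty) = g_\infty\nabla_v\psi(g_\infty) = -(v-u_f)g_\infty,
\]
which follows from $L_\psi'(s)=s\psi'(s)$ and (3). Then
\[
\intr (v-u_f)_i (v-u_f)_j g_\infty\,\dv = -\intr (v-u_f)_i\,\pa_{v_j}L_\psi(g_\infty)\,\dv = \delta_{ij}\intr L_\psi(g_\infty)\,\dv
\]
after integrating by parts. The boundary terms vanish because $g_\infty$ is compactly supported.

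For (4), set $h=h_\psi(\rho_f)$ and note that on the support $\psi(g_\infty)=h-V$. Write $\Psi(g)=g\psi(g)-L_\psi(g)$ and integrate:
\[
\intr \big(Vg_\infty+\Psi(g_\infty)\big)\,\dv = \intr g_\infty(V+h-V)\,\dv - \intr L_\psi(g_\infty)\,\dv = \rho_f h - \intr L_\psi(g_\infty)\,\dv,
\]
which is \eqref{eq:cr}. For (5), differentiate $E(\rho):=\intr \big(Vg_\infty+\Psi(g_\infty)\big)\,\dv$ with respect to $\rho$ through the dependence of $g_\infty$ on $h_\psi(\rho)$. This gives
\[
E'(\rho)=\intr \big(V+\psi(g_\infty)\big)\,\pa_\rho g_\infty\,\dv = h\intr \pa_\rho g_\infty\,\dv = h\,\frac{d}{d\rho}\rho = h.
\]
Here we use that $\psi(g_\infty)+V=h$ on the support, and that the integrand vanishes off the support because $\pa_\rho g_\infty=0$ there. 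Differentiation under the integral is justified by the $C^1$ regularity of $h_\psi$ from Lemma \ref{lem:h}. Equivalently, (5) follows from (4) and \eqref{eq:P,Phi}, since the identity says $E=\Phi_\psi$.
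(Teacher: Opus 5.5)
Your proposal is correct and follows the same route as the paper's sketch. You test \eqref{eq:ct} against $1$ and $v$, obtain the profile from stationarity on the support with the constant fixed by \eqref{def:h}, and get (2), (4), (5) from $L_\psi'(s)=s\psi'(s)$ and differentiation in $\rho_f$; your appeal to the unique mass-constrained minimizer also settles the single-constant/connected-support point the paper passes over. Drop the closing ``equivalently'' remark in (5), though: deducing $E=\Phi_\psi$ from (4) needs $\int L_\psi(g_\infty)\,dv=P_\psi(\rho_f)$ independently, which the paper obtains from (5) in Remark~\ref{rmk:PbyL}, and your direct differentiation already suffices.
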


\begin{proof}
The items are standard given \eqref{eq:ct} and \eqref{def:h}; we sketch the main steps. Mass and momentum conservation follow from integrating \eqref{eq:ct} against $1$ and $v$. Stationarity $\nabla_v(\psi(g_\infty)+V)=0$ on the support yields (3). Identity (4) follows from $L_\psi'(s)=s\psi'(s)$ and integrating by parts; differentiating with respect to $\rho_f$ gives (5).
\end{proof}

\begin{remark}\label{rmk:PbyL}
From the constitutive relation \eqref{eq:cr} and enthalpy relation \eqref{eq:er}, by recalling the definition of macroscopic pressure \eqref{eq:P,Phi}, we obtain
\[
P_\psi(\rho)= \int_{\R^N} L_\psi(g_\infty)\,\dv.
\]
\end{remark}

We now associate the long-time asymptotic $g_\infty$ with the local Maxwellian of $f$ for the collisional operator \eqref{eq:Q}.

\begin{definition}
For $f\in\mathcal A(\mathbb R^N)$, we define
\[
M_\psi[f]:=M_\psi^{(\rho_f,u_f)},\qquad 
M_\psi^{(\rho,u)}(v):=\psi^{-1}\Big(\big(h_\psi(\rho)-\tfrac12|v-u|^2\big)_+\Big).
\]
For $\psi_m$ as in Lemma \ref{prop:pl}, the local Maxwellian is given as
\[
M_{\psi_m}(v)= c_2 (c_1 \rho_f^{\gamma-1} - |v-u_f|^2)_+^{\frac{d}{2}},
\]
where $c_1$ and $c_2$ are defined as in \eqref{eq:c1c2}. This Maxwellian arises in the BGK model introduced by Bouchut \cite{bouchut1999}, see also \cite{CH24b, KS24} for more details. 
\end{definition}

From Lemma \ref{lem:a}, we obtain the following corollary.

\begin{corollary}\label{cor:basic}
For $f\in\mathcal A(\mathbb R^N)$ as defined in \eqref{eq:A}, the Maxwellian $M_\psi[f]$ satisfies:
\begin{enumerate}
\item (Minimization) 
\bq\label{eq:minpr}
\int_{\mathbb R^N}\Big(\tfrac12|v|^2 f + \Psi(f)\Big)\,dv 
\ge \int_{\mathbb R^N}\Big(\tfrac12|v|^2 M_\psi[f] + \Psi(M_\psi[f])\Big)\,dv.
\eq
\item (Compatibility) 
\bq\label{eq:comp}
\int_{\mathbb R^N}\Big(\tfrac12|v|^2 M_\psi[f] + \Psi(M_\psi[f])\Big)\,dv
= \tfrac12\rho_f|u_f|^2 + \Phi_\psi(\rho_f),
\eq
where $\Phi_\psi$ is defined in \eqref{eq:P,Phi}.
\item (Generalized Log-Sobolev inequality) 
\bq\label{eq:LSI}
\begin{aligned}
\int_{\mathbb R^N}\Big(\tfrac12|v|^2 f + \Psi(f)\Big)\,dv 
\,- &\int_{\mathbb R^N}\Big(\tfrac12|v|^2 M_\psi[f] + \Psi(M_\psi[f])\Big)\,dv \\
&\le \tfrac12\int_{\mathbb R^N} f\big|\nabla_v(\psi(f)+V)\big|^2\,dv.
\end{aligned}
\eq
\end{enumerate}
\end{corollary}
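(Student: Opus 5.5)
We prove the three items in order. The starting point is that $\frac12|v|^2 = \frac12|v-u_f|^2 + v\cdot u_f - \frac12|u_f|^2$. For any $g$ with the same mass $\rho_f$ and momentum $\rho_f u_f$ as $f$, this gives
\[
\intr \Big(\tfrac12|v|^2 g + \Psi(g)\Big)\,\dv = \mathcal F(g) + \tfrac12\rho_f|u_f|^2 ,
\]
where $\mathcal F$ is the free energy in \eqref{eq:F,I} with $V=\frac12|v-u_f|^2$. By Lemma \ref{lem:a}(1),(3), $M_\psi[f]=g_\infty$ has the same mass and momentum as $f$. Hence all three statements reduce to statements about $\mathcal F(f)-\mathcal F(M_\psi[f])$ and $\mathcal F(M_\psi[f])$.

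\textbf{Compatibility \eqref{eq:comp}.} By the above identity it suffices to show $\mathcal F(M_\psi[f]) = \Phi_\psi(\rho_f)$.
\begin{itemize}
\item By Remark \ref{rmk:PbyL} together with \eqref{eq:cr}, $\mathcal F(g_\infty)=\rho_f h_\psi(\rho_f) - P_\psi(\rho_f)$.
\item By the definition \eqref{eq:P,Phi}, this equals $\Phi_\psi(\rho_f)$.
\item Alternatively, \eqref{eq:er} states $\frac{d}{d\rho}\mathcal F(M^{(\rho,u)}_\psi)=h_\psi(\rho)$. The map $\rho\mapsto\mathcal F(M^{(\rho,u)}_\psi)$ vanishes at $\rho=0$ (the Maxwellian has empty support), so integrating in $\rho$ gives the same conclusion.
\end{itemize}

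\textbf{Minimization \eqref{eq:minpr}.} I would prove this directly by convexity rather than through the long-time limit, which is only formal. Set $\mu:=h_\psi(\rho_f)$ and $M:=M_\psi[f]$.
\begin{itemize}
\item Since $\Psi$ is convex with $\Psi'=\psi$, we have $\Psi(f)\ge \Psi(M)+\psi(M)(f-M)$ pointwise.
\item On $\{M>0\}$, $\psi(M)=\mu - V$. On $\{M=0\}$, $\psi(M)=0\ge \mu-V$ and $f-M=f\ge 0$. Hence pointwise
\[
\Psi(f)+Vf \ge \Psi(M)+VM + (\mu - V)(f-M) + V(f-M) = \Psi(M)+VM+\mu(f-M).
\]
\item Integrate in $v$ and use $\int f\,\dv=\int M\,\dv$. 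This gives $\mathcal F(f)\ge\mathcal F(M)$, which is \eqref{eq:minpr} after adding $\frac12\rho_f|u_f|^2$ to both sides.
\end{itemize}

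\textbf{Generalized Log-Sobolev \eqref{eq:LSI}.} By the reduction, the claim is $\mathcal F(f)-\mathcal F(M)\le \frac12 I(f)$. This is the entropy–entropy-production inequality for the nonlinear drift-diffusion \eqref{eq:ct} with the uniformly $1$-convex potential $V$.
\begin{itemize}
\item I would invoke \cite{CJMTU01}. There the Bakry–Émery argument gives exponential decay of the Fisher-type information along the flow, $\frac{d}{dt}I(g)\le -2I(g)$, when the potential has Hessian at least $\mathrm{Id}$ and the diffusion satisfies McCann's displacement-convexity condition. For $L_\psi$ this condition reads $L_\psi(s)\ge (1-\frac1N)s L_\psi'(s)$, or the equivalent condition on $s\psi'(s)$.
\item Integrating $\frac{d}{dt}\mathcal F(g)=-I(g)$ from $0$ to $\infty$, together with $I(g(t))\le e^{-2t}I(f)$, yields $\mathcal F(f)-\mathcal F(g_\infty)\le \frac12 I(f)$.
\item The identification $g_\infty=M$ comes from Lemma \ref{lem:a}(3).
\end{itemize}

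\textbf{Main obstacle.} The hard part is verifying that $L_\psi$ satisfies the structural hypotheses of \cite{CJMTU01}, using (A1), (A2) and (A4). I would try to derive McCann's condition from (A4), i.e. monotonicity of $s\psi'(s)$, which gives $L_\psi'(s)=s\psi'(s)$ nondecreasing. This check is deferred to Appendix \ref{app}.

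A secondary technical point is justifying the formal computations for general $f\in\mathcal A(\R^N)$. I would handle this by the usual density argument: regularize $f$ to be smooth and positive, apply the inequality, and pass to the limit using lower semicontinuity of $I$ and continuity of $\mathcal F$ under the moment bounds.
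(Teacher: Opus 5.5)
Your treatment of (2) and (3) follows the paper. For (2) you use \eqref{eq:cr} with the definitions \eqref{eq:P,Phi} (or, equivalently, integrate \eqref{eq:er} from $\rho=0$). For (3) you cite the generalized Sobolev inequality of \cite{CJMTU01}, whose hypotheses are checked in Appendix~\ref{app}.

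For (1) you take a genuinely different route. The paper deduces minimization from the dissipation identity \eqref{eq:diss:g} along the auxiliary flow \eqref{eq:ctrl}. That tacitly requires three things: global solvability of \eqref{eq:ctrl} for every $f\in\mathcal A(\R^N)$, convergence $g(t)\to M_\psi[f]$, and lower semicontinuity of $\mathcal F$ along that convergence. Your supporting-line argument avoids all of this. It uses $\Psi(f)\ge\Psi(M)+\psi(M)(f-M)$, with $\psi(M)=(\mu-V)_+\ge\mu-V$ and $f-M=f\ge0$ on $\{M=0\}$, and then integrates against the mass constraint \eqref{def:h}. It needs only convexity of $\Psi$, (A1)--(A2), and $f\in\mathcal A(\R^N)$, and it is rigorous with no PDE theory. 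It even identifies $\mathcal F(f)-\mathcal F(M)$ as a Bregman divergence plus $\int_{\{M=0\}}(V-\mu)f\,dv$. To keep the argument fully flow-free, note that the mass and momentum of $M_\psi[f]$ follow directly from \eqref{def:h} and the symmetry of $M_\psi[f]$ about $u_f$, so you do not need Lemma~\ref{lem:a}. The paper's route presents (1) and (3) as consequences of one dissipation structure, but for (1) yours is the more robust proof.

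There are two corrections in part (3).

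\emph{McCann's condition.} You state it backwards. The hypothesis needed in \cite{CJMTU01} (cf.\ (HF4) in Appendix~\ref{app}) is $sL_\psi'(s)\ge(1-\frac1N)L_\psi(s)$. Equivalently, $L_\psi(s)\le\frac{N}{N-1}sL_\psi'(s)$, or $s\mapsto s^{-(1-1/N)}L_\psi(s)$ is nondecreasing. The inequality you wrote, $L_\psi(s)\ge(1-\frac1N)sL_\psi'(s)$, fails for $L_\psi(s)=s^m$ whenever $N\ge2$ and $m>\frac{N}{N-1}$, and it cannot be derived from (A4). What (A4) gives, through convexity of $L_\psi$ and $L_\psi(0)=0$, is $L_\psi(s)=\int_0^s L_\psi'(u)\,du\le sL_\psi'(s)$, which is exactly the correct direction. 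So your plan goes through once the inequality is flipped.

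\emph{Density argument.} Lower semicontinuity of $I$ points the wrong way. To pass to the limit in $\mathcal F(f_n)-\mathcal F(M_\psi[f_n])\le\frac12 I(f_n)$, you need lower semicontinuity of the left-hand side and $\limsup_n I(f_n)\le I(f)$ along your regularization. The case $I(f)=+\infty$ is trivial.
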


\begin{proof}
(1) follows from dissipation structure \eqref{eq:diss:g}. (2) is a direct consequence of \eqref{eq:cr} and \eqref{eq:P,Phi}. (3) is the generalized Log Sobolev inequality, see e.g. \cite[Theorem 17, Section 3.4]{CJMTU01}. See also Appendix \ref{app}. Concerning the generalized Log Sobolev inequality, we also refer to \cite{DD02} and the references therein.
\end{proof}

Now, multiplying $\begin{pmatrix} 1 \\v
\end{pmatrix}$ to \eqref{main_eq} and integrating against $\dv$, we obtain
\begin{align}\label{eq:f_cons}
\begin{aligned}
&\pa_t \rho_f + \nabla_x \cdot (\rho_f u_f) = 0,\\
&\pa_t (\rho_f u_f) + \nabla_x \cdot (\rho_f u_f \otimes u_f) + \nabla_x \cdot \lt(\int_{\mathbb R^N} (v-u_f)\otimes(v-u_f)\,f\dv\rt)=0.
\end{aligned}
\end{align}
From Remark \ref{rmk:PbyL} and \eqref{eq:sm},
\begin{align*}
\int_{\mathbb R^N} (v-u_f)\otimes(v-u_f)\,f\,dv - P_\psi(\rho_f)I_N
&= \int_{\mathbb R^N} (v-u_f)\otimes(v-u_f)\,(f-M_\psi[f])\,dv
\end{align*}
The rest of the analysis mainly concerns the control of error between the pressure tensor
$
\int_{\mathbb R^N} (v-u_f)\otimes(v-u_f)\,f\,dv
$
and barotropic pressure law $P_\psi(\rho_f)$. From the trivial inequality
\[
\Big|\int_{\mathbb R^N} (v-u_f)\otimes(v-u_f)\,(f-M_\psi[f])\,dv\Big|
\le \int_{\mathbb R^N} |v-u_f|^2\,|f-M_\psi[f]|\,dv,
\]
we aim to bound the right-hand side to obtain the passage from \eqref{main_eq} to the barotropic Euler equations with pressure $P_\psi$.

\section{Control of pressure error}\label{sec:key}

In this section, we prove the following quantitative estimate.

\begin{proposition}\label{prop:diss}
Assume (A1)--(A3) hold. For any admissible $f\in\mathcal A(\mathbb R^N)$, there exists $C=C_{\Lambda,N}>0$ such that
\begin{equation}\label{eq:key}
\int_{\mathbb R^N}\Big(\tfrac12|v-u_f|^2\,|f-M_\psi[f]| + \big|\Psi(f)-\Psi(M_\psi[f])\big|\Big)\,dv
\le C\Big(\sqrt{\Phi_\psi(\rho_f)}\sqrt{R\calF(f)} + R\calF(f)\Big),
\end{equation}
where
\bq\label{eq:D}
R\calF(f):= \calF(f) - \calF(M_\psi[f]) = \int_{\mathbb R^N}\Big(\tfrac12|v|^2 f + \Psi(f)\Big)\,dv 
- \int_{\mathbb R^N}\Big(\tfrac12|v|^2 M_\psi[f] + \Psi(M_\psi[f])\Big)\,dv.
\eq
\end{proposition}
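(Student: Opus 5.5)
The plan is to rewrite $R\calF(f)$ as a sum of two pointwise nonnegative integrands and then bound the pressure error against each of them. Throughout write $M:=M_\psi[f]$, $h:=h_\psi(\rho_f)$ and $V:=\tfrac12|v-u_f|^2$, so that $\psi(M)=(h-V)_+$.

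\textbf{Step 1: decomposition of $R\calF(f)$.} By Lemma \ref{lem:a}(1), $f$ and $M$ have the same mass and momentum. Hence $\int\tfrac12|v|^2(f-M)\,\dv=\int V(f-M)\,\dv$ and $\int h(f-M)\,\dv=0$. Using $\psi(M)=(h-V)_+$ and $\psi(0)=0$, this gives
\[
R\calF(f)=\int_{\R^N} D_\Psi(f,M)\,\dv+\int_{\{V>h\}}(V-h)f\,\dv,\qquad D_\Psi(a,b):=\Psi(a)-\Psi(b)-\psi(b)(a-b)=\int_b^a(\psi(s)-\psi(b))\,\ds\ge0 .
\]
Both pieces are nonnegative, so each is bounded by $R\calF(f)$.

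\textbf{Step 2: reduction of the $\Psi$-term.} Since
\[
|\Psi(f)-\Psi(M)|\le D_\Psi(f,M)+\psi(M)|f-M|\le D_\Psi(f,M)+h|f-M|\mathbf 1_{\{V\le h\}},
\]
everything reduces to bounding $\int (V+h\mathbf 1_{\{V\le h\}})|f-M|\,\dv$.

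\textbf{Step 3: outside the support $\{V>h\}$.} Here $|f-M|=f$.
\begin{itemize}
\item On $\{V>2h\}$ we have $V\le 2(V-h)$, so $\int Vf\le 2R\calF(f)$.
\item On the shell $\{h<V\le2h\}$ the weight $V-h$ degenerates at the boundary, so $\int Vf$ is not controlled directly. I would compare $f$ there with an \emph{extension} of the Maxwellian, $\tilde M:=\psi^{-1}(|h-V|)$, following the extension-map idea of \cite{KS24}.
\item On the part of the shell where $f\le\tilde M$: by (A3) and polar coordinates, $\int_{\{h<V\le 2h\}}\tilde M\,\dv\lesssim\rho_f$, comparable to the mass of $M$ on a comparable shell inside the support. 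I would then bound $\int Vf\le 2h\int_{\{h<V\le 2h\}}f$ by Cauchy--Schwarz against $\int (V-h)f$, giving a contribution $\lesssim\sqrt{h\rho_f\,R\calF(f)}+R\calF(f)$.
\item On the part where $f>\tilde M$: the factor $\psi(f)\ge V-h$ gives direct control.
\end{itemize}

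\textbf{Step 4: inside the support $\{V\le h\}$.} Split according to the size of the deviation.
\begin{itemize}
\item \emph{Small deviation}, $|f-M|\le M$. By (A3), $\psi'$ is comparable to $\psi'(M)$ on $[M/2,2M]$, hence $D_\Psi(f,M)\gtrsim |f-M|^2/(\psi^{-1})'(\psi(M))$. Cauchy--Schwarz gives
\[
\int h|f-M|\le h\Big(\int D_\Psi\Big)^{1/2}\Big(\int_{\{V\le h\}}(\psi^{-1})'(h-V)\,\dv\Big)^{1/2}.
\]
The last integral equals $\calI'(h)$ from the proof of Lemma \ref{lem:h}. A dyadic-shell argument with (A3), as in \eqref{eq:est_diadic}, gives $h\,\calI'(h)\lesssim\calI(h)=\rho_f$. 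The bound is therefore $\lesssim\sqrt{h\rho_f\,R\calF(f)}$. By Corollary \ref{cor:sh(s)}, $h\rho_f\le(2^{\frac2N+1})\Phi_\psi(\rho_f)$, which gives exactly the first term of \eqref{eq:key}.
\item \emph{Large deviation}, $f>2M$. Here $D_\Psi(f,M)\ge \tfrac f2\big(\psi(f/2\cdot\ldots)\big)$; more simply $D_\Psi(f,M)\ge\int_{(f+M)/2}^f(\psi(s)-\psi(M))\,\ds\gtrsim f\,(\psi(f)-\psi(M))$. I would use this to bound $h\,f$ when $\psi(f)\gtrsim h$, and otherwise fall back on the Cauchy--Schwarz estimate of the previous case, now with $f$ in place of $M$.
\item \emph{Small density}, $f<M/2$. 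Then $|f-M|\sim M$, and $D_\Psi\gtrsim M\psi(M)$, which is handled in the same way.
\end{itemize}
Summing Steps 2--4 gives \eqref{eq:key} with $C=C_{\Lambda,N}$.

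\textbf{Expected main obstacle.} The delicate region is near the free boundary $\{V\approx h\}$, on both sides. There $\psi(M)=h-V$ is small, so the weight $V$ (of size about $h$) is not controlled by $\psi(f)-\psi(M)$ or by $V-h$. This is precisely where the extension map $\tilde M$ and the doubling assumption (A3) must be used, to obtain uniform dyadic comparisons of $(\psi^{-1})'$ and $\psi^{-1}$ across shells. I expect designing $\tilde M$ so that both the large-deviation case and the boundary-shell case close with constants depending only on $\Lambda$ and $N$ to be the crux of the argument.
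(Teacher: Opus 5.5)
Your Steps 1--2 are correct, and the exact splitting $R\calF(f)=\int D_\Psi(f,M)\,dv+\int_{\{V>h\}}(V-h)f\,dv$ is a nice observation. The far region $\{V>2h\}$ is also fine. The gap is the free-boundary layer $\{V\approx h\}$, which is the entire content of the proposition. The pointwise-in-$v$ tools you propose there are not available under (A1)--(A3).

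Assumption (A3) makes $(\psi^{-1})'$ doubling. Your claims that $\psi'$ is comparable to $\psi'(M)$ on $[M/2,2M]$, and that $D_\Psi(f,M)\gtrsim f(\psi(f)-\psi(M))$ for $f>2M$, instead need $\psi$ itself to be doubling, i.e.\ $2\psi^{-1}(s)\le\psi^{-1}(Cs)$. (A3) does not give this. Take $\psi(s)=e^{-1/s}$ near $s=0$, glued to a power law for large $s$. Then $(\psi^{-1})'(\sigma)=1/(\sigma\log^2(1/\sigma))$ for small $\sigma$, so (A1)--(A3) hold. Yet $\psi'(2M)/\psi'(M)=e^{1/(2M)}/4\to\infty$. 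Moreover $D_\Psi(M/2,M)\le\frac12M\psi(M)$ while $\psi'(M)(M/2)^2=\frac14\psi(M)$, so your quadratic lower bound fails by a factor $M$.

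In the same example, the Cauchy--Schwarz weights you need in the outer shell and in the small-density case, $\int_{\{h<V\le 2h\}}\tilde M\,(V-h)^{-1}dv$ and $\int_{\{V<h\}}M\,\psi(M)^{-1}dv$, are both infinite, because $\int_0^{1/2}\frac{d\sigma}{\sigma\log(1/\sigma)}=\infty$.

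Independently of (A3), on $\{h<V\le2h,\ f>\tilde M\}$ the only information $\psi(f)\ge V-h$ degenerates as $V\downarrow h$. It gives no bound on $hf$, and there is no a priori bound on $f$ there. The large-deviation and small-density cases are also only sketched. For power laws your case analysis can probably be completed, but as it stands the obstacle you identify at the end is not resolved.

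The missing idea is to do the estimate in the lifted variable $\sigma=\frac12I^2$. There the unbounded $f$ becomes a $\{0,1\}$-valued profile and $M$ becomes the indicator of a ball. Take $u_f=0$ and set $\chi_g(v,\sigma):=\mathbf 1_{\{\sigma<\psi(g(v))\}}$. Then $g=\int_0^\infty\chi_g\,(\psi^{-1})'(\sigma)\,d\sigma$, $\Psi(g)=\int_0^\infty\sigma\chi_g\,(\psi^{-1})'(\sigma)\,d\sigma$ and $\chi_M=\mathbf 1_{\{V+\sigma<h\}}$, hence
\[
R\calF(f)=\int_{\R^N}\int_0^\infty|V+\sigma-h|\,|\chi_f-\chi_M|\,(\psi^{-1})'(\sigma)\,d\sigma\,dv .
\]
The left side of \eqref{eq:key} is the same integral with weight $V+\sigma$.

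Since $|\chi_f-\chi_M|\le1$, the layer $\{|V+\sigma-h|<\delta\}$ with $\delta\le h/2$ contributes at most $\frac32h\,(\calI(h+\delta)-\calI(h-\delta))\lesssim\delta\rho_f$, using (A3) alone. Off the layer, $V+\sigma\le\frac{2h}{\delta}|V+\sigma-h|$. Choosing $\delta=\min\{h/2,\sqrt{hR\calF(f)/\rho_f}\}$ and using Corollary \ref{cor:sh(s)} gives \eqref{eq:key}.

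The paper implements this same lifting through the extension map $\widehat f$. It then radializes, so that $\overline f\in[0,1]$ and $\overline M=\mathbf 1_{[0,r_1]}$, with $dA$ doubling by Lemma \ref{lem:a_conf}. Finally it applies the Berthelin--Vasseur local/global decomposition together with Lemma \ref{lem:crit}. In either form, (A3) enters only through the doubling of a measure, never through doubling of $\psi$.
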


\begin{remark}
Proposition \ref{prop:diss} is known in special cases for  $\psi_m$ in \cite[Lemma 1.4]{KS24} for $1<m<\infty$ and   for $\psi_\infty$ in \cite{BV05} (see Remark \ref{rmk:psi_m} for definitions of $\psi_m$). To treat the general degenerate diffusion case, we use the extension map technique inspired by \cite{KS24} and rearrangement ideas in \cite{BV05}. 
\end{remark}

\subsection{Extension of $f$ to dimension $N+1$}
Since  $\psi:\R \to \R$ is an invertible  $\mathcal C^1$ function, we have 
\[
s=\int_{\psi(0)}^{\psi(s)}(\psi^{-1})'(u)\,du.
\]
Moreover, it also holds that
\[
\int_0^s \psi(u)\,du = \int_{\psi(0)}^{\psi(s)} u (\psi^{-1})'(u)\,du.
\]
By changing variables $u=\tfrac12 I^2$, we obtain the following useful representation
\[
\tfrac12|v|^2 f + \Psi(f) = \int_0^{\sqrt{2\psi(f(v))}} \tfrac12\big(|v|^2+I^2\big)\,(\psi^{-1})'\!\big(\tfrac12 I^2\big)\,I\,dI.
\]
Inspired by \cite{KS24}, we define the \emph{extension map}
\[
\widehat f(v,I):= \mathbf 1_{\{0\le I\le \sqrt{2\psi(f(v))}\}},
\]
and the weight
\begin{equation}\label{eq:mu}
\mu(I):=(\psi^{-1})'\!\big(\tfrac12 I^2\big)\,I.
\end{equation}
Then, we have
\begin{equation}\label{eq:wh:recover}
f(v)=\int_0^\infty \widehat f(v,I)\,\mu(I)\,dI,\qquad
\tfrac12|v|^2 f + \Psi(f) = \int_0^\infty \tfrac12\big(|v|^2+I^2\big)\widehat f(v,I)\,\mu(I)\,dI.
\end{equation}
For the Maxwellian $M_\psi[f]$, its extension map is the characteristic function on a ball:
\begin{equation}\label{eq:wh:M}
\widehat{M_\psi[f]}(v,I) = \mathbf 1_{\{(v,I):\,|v-u_f|^2+I^2\le 2h_\psi(\rho_f)\}}.
\end{equation}
Translating $v\mapsto v+u_f$, we may assume $u_f=0$. It is crucial to notice that $\widehat{M_\psi[f]}$ is radial in $(v,I)$ in the half-space $\R^N \times \R_+$.

\subsection{Projection to one-dimensional space by radialization}

In this section, we radialize the extended map $\widehat{f}$. This type of projection idea appears in \cite{BV05}. Denoting the radius $r=\sqrt{|v|^2+I^2}$ for coordinates in $(v,I)\in\mathbb R^{N+1}_+$, we let
\[
\overline f(r):=\frac{1}{s(r)}\int_{\mathbb S^+_N}\widehat f(r\sigma)\,\mu(r\cos\theta)\,d\sigma,
\]
where $\theta$ denotes the angle between $(v,I)$ and $I$-axis in $(v,I)$-space, and $s(r)$ is defined as normalizing constant
\bq\label{eq:proj}
s(r):=\int_{\mathbb S^+_N}\mu(r\cos\theta)\,d\sigma(\theta).
\eq
Set 
\bq\label{eq:a}
a(r):=r^N s(r),\qquad dA(r):=a(r)\,dr.
\eq
Notice that $\overline{f}(r)$ is valued in $[0,1]$. Moreover, recalling \eqref{eq:wh:M}, the projected Maxwellian is given by $\overline M(r)=\mathbf 1_{[0,r_1]}(r)$ with $r_1:=\sqrt{2h_\psi(\rho_f)}$. 

\begin{lemma}\label{lem:recover}
For $f \in \mathcal A(\R^N)$, we have
\[
\rho_f=\int_0^\infty \overline f(r)\,d A(r),\qquad
\int_{\mathbb R^N}\Big(\tfrac12|v|^2 f + \Psi(f)\Big)\,dv
= \int_0^\infty \tfrac12 r^2 \overline f(r)\,d A(r).
\]
\end{lemma}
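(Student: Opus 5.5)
The plan is to start from the representation \eqref{eq:wh:recover} and rewrite both identities as integrals over the half-space $\R^N\times\R_+$ in $(v,I)$, and then pass to polar coordinates centred at the origin of $\R^{N+1}$. For the mass, \eqref{eq:wh:recover} and Fubini (everything is nonnegative) give
\[
\rho_f=\intr f\,\dv=\intr\int_0^\infty \widehat f(v,I)\,\mu(I)\,dI\,\dv .
\]
The energy identity works the same way:
\[
\intr\Big(\tfrac12|v|^2 f+\Psi(f)\Big)\dv=\intr\int_0^\infty \tfrac12\big(|v|^2+I^2\big)\widehat f(v,I)\,\mu(I)\,dI\,\dv .
\]
Both right-hand sides are finite because $f\in\mathcal A(\R^N)$, and $\Psi\ge0$ since $\psi\ge0$.

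Next, write $(v,I)=r\sigma$ with $r>0$ and $\sigma\in\bbS^+_N$, the upper unit half-sphere. The $I$-coordinate is then $I=r\cos\theta$, where $\theta$ is the angle with the $I$-axis. Lebesgue measure becomes $dv\,dI=r^N\,dr\,d\sigma$, and $|v|^2+I^2=r^2$. Substituting, the mass becomes
\[
\int_0^\infty\Big(\int_{\bbS^+_N}\widehat f(r\sigma)\,\mu(r\cos\theta)\,d\sigma\Big)r^N\,dr .
\]
By the definition of $\overline f$, the inner integral equals $s(r)\overline f(r)$. Hence the mass is $\int_0^\infty \overline f(r)\,r^Ns(r)\,dr=\int_0^\infty\overline f\,dA$ by \eqref{eq:a}. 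The energy term is identical except for the extra factor $\tfrac12 r^2$, which depends only on $r$ and so comes out of the angular integral, giving $\int_0^\infty\tfrac12 r^2\overline f(r)\,dA(r)$.

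The only step that needs care is whether $\overline f$ is well defined, i.e. whether $0<s(r)<\infty$. Positivity holds because $\mu(I)>0$ for $I>0$: we have $(\psi^{-1})'=1/\psi'(\psi^{-1})>0$ by (A1), and $\cos\theta>0$ on the open half-sphere. Finiteness holds because $\mu$ is locally integrable. Indeed $\int_0^{R}\mu(I)\,dI=\psi^{-1}(\tfrac12R^2)<\infty$, so integrability on the sphere follows by rewriting the angular integral in the variable $I=r\cos\theta$ (the Jacobian factor is bounded near the equator). Alternatively, I would sidestep this point: set $\overline f(r)s(r):=\int\widehat f\mu\,d\sigma$, which is finite for a.e. $r$ by Tonelli since the total integral is finite. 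The identities only use the product $\overline f\,s$, so nothing more is needed.

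The remaining remark is the one used later: $0\le\overline f\le1$, since $0\le\widehat f\le1$. I would also do a quick sanity check with the Maxwellian. In that case $\widehat{M_\psi[f]}$ is the indicator of the ball of radius $r_1$ by \eqref{eq:wh:M}, so $\overline M=\mathbf 1_{[0,r_1]}$, and the lemma reproduces $\rho_f$ consistently with \eqref{def:h}.
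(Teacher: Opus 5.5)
Your proposal is correct and follows the paper's proof. You lift $f$ via \eqref{eq:wh:recover} to the half-space in $(v,I)$, pass to polar coordinates in $\R^{N+1}_+$, and identify the angular integral as $s(r)\overline f(r)$; the energy identity then follows in the same way because $\tfrac12(|v|^2+I^2)=\tfrac12 r^2$. Your extra checks (Tonelli for the nonnegative integrands, and $0<s(r)<\infty$ from the local integrability of $\mu$) fill in details the paper leaves implicit, and they are sound.
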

\begin{proof}
Notice that
\begin{align*}
\begin{aligned}
\rho_f &= \intr f(v)\,\dv =\intr \int_0^\infty \widehat{f}(v,I)\,\mu(I)\,dIdv \\
&= \int_0^\infty \int_{\bbS_N^+} \widehat{f}(r\sigma) \mu(r \cos \theta) r^N d\sigma(\theta) dr  = \int_0^\infty \overline{f}(r) r^N s(r)\,dr.
\end{aligned}
\end{align*}
The second equality can be proved similarly by \eqref{eq:wh:recover}.
\end{proof}

As a direct application of Lemma \ref{lem:recover}, we can rewrite  $R\calF(f)$ in \eqref{eq:D} as
\begin{equation}\label{aux}
 R\calF(f) = D=\int_0^\infty \tfrac12 r^2(\overline f(r)-\overline M(r))\,d A(r).
\end{equation}

\subsection{Decomposition of Dissipation}

The monotonicity of $\Psi$ and \eqref{eq:wh:recover} yields
\[
\begin{aligned}
& \int_{\mathbb R^N}\Big(\tfrac12|v|^2|f-M| + |\Psi(f)-\Psi(M)|\Big)\,dv  = \int_{\mathbb R^N}\Big|\tfrac12|v|^2(f-M) + (\Psi(f)-\Psi(M))\Big|\,dv  \\
= &\intr \lt|\int_0^\infty \tfrac12\big(|v|^2+I^2\big)(\widehat f(v,I) -\widehat{M_\psi[f]})\,\mu(I)\,dI\rt| \,dv\\
\le &\intr \int_0^\infty \tfrac12\big(|v|^2+I^2\big)\lt|\widehat f(v,I) -\widehat{M_\psi[f]}\rt|\,\mu(I)\,dI \,dv\\
= &\int_0^\infty \int_{\bbS_N^+}  \tfrac12r^2 \lt|\widehat f(r\sigma) -\widehat{M_\psi[f]}(r\sigma)\rt|\,\mu(r\cos\theta)\,d\sigma(\theta)r^Ndr \\
= &\int_0^{r_1} \int_{\bbS_N^+}  \tfrac12r^2 (1-\widehat f(r\sigma)) \,\mu(r\cos\theta)\,d\sigma(\theta)r^Ndr + \int_{r_1}^{\infty} \int_{\bbS_N^+}  \tfrac12r^2 \widehat f(r\sigma) \,\mu(r\cos\theta)\,d\sigma(\theta)r^Ndr \\
=  &\int_0^{r_1} \tfrac12 r^2\,(1-\overline f(r))\,dA(r) + \int_{r_1}^\infty  \tfrac12 r^2\,\overline f(r)\,dA(r)
= \int_0^{\infty} \tfrac12 r^2\,|\overline f(r)-\overline M(r)|\,dA(r) =:F.
\end{aligned}
\]
Note that the fifth identity holds by separating the integral into $r \le r_1$ and $r>r_1$ parts since $\widehat{M_\psi[f]}(r\sigma)=1_{r\le r_1}$ and $\widehat{f}(r\sigma) \in \{0,1\}$. Similarly, the last identity holds because $\ov{f}(r) \in [0,1]$. It is now easy to see that $F\geq D$ by \eqref{aux}. Estimating $F$ in terms of $D$ is challenging due to cancellations in the integrand of $D$.
Following \cite{BV05}, we decompose $F$ and $D$ into local and global parts and treat each case in a suitable way.

We first notice that Lemma \ref{lem:recover} implies
\[
\int_0^{r_1}(1-\ov{f}) dA =\rho_f - \int_0^{r_1} \overline{f} dA = \int_{r_1}^\infty \overline{f} dA.
\]
In other words, the mass fluctuations are the same for the $r\le r_1$ and $r >r_1$ parts. Now, we may assume $\ov{f}\neq \ov{M}$, otherwise $F=D=0$. We decompose the fluctuations near $r_1$ and away from $r_1$. Namely, we
fix $r_0\in(0,r_1)$ and uniquely choose $r_2>r_1$ in the following way:
\[
r_2:=\min \lt\{\tilde r >r_1: \int_{r_1}^{\tilde r} \ov{f}(r)\,dA(r) = \int_{r_0}^{r_1} (1-\ov{f}(r))\,dA(r)\rt\}.
\]
We then obtain
\bq\label{eq:r1r2}
M_{\rm loc}:=\int_{r_0}^{r_1}(1-\overline f)\,dA = \int_{r_1}^{r_2}\overline f\,d A,\qquad
M_{\rm glo}:=\int_0^{r_0}(1-\overline f)\,d A = \int_{r_2}^\infty \overline f\,d A.
\eq
Notice that 
\[
M_{\rm loc} + M_{\rm glo} = \int_0^{r_1} (1-\overline{f}) dA = \int_{r_1}^\infty \overline{f} dA.
\]
We similarly define $F_{\rm loc},F_{\rm glo},D_{\rm loc},D_{\rm glo}$ in the following way.
\[
\begin{aligned}
&F_{\rm loc}:= \int_{r_0}^{r_1} \frac{1}{2}r^2(1-\overline{f}(r))\,dA(r) + \int_{r_1}^{r_2} \frac{1}{2} r^2 \overline{f}(r)\,dA(r)  \\
&F_{\rm glo}:= \int_{0}^{r_0} \frac{1}{2}r^2(1-\overline{f}(r))\,dA(r) + \int_{r_2}^{\infty} \frac{1}{2} r^2 \overline{f}(r)\,dA(r)\\
&D_{\rm loc}:= \int_{r_0}^{r_1} \frac{1}{2}r^2(\overline{f}(r)-1)\,dA(r) + \int_{r_1}^{r_2} \frac{1}{2} r^2 \overline{f}(r)\,dA(r) \\
&D_{\rm glo}:= \int_{0}^{r_0} \frac{1}{2}r^2(\overline{f}(r)-1)\,dA(r) + \int_{r_2}^{\infty} \frac{1}{2} r^2 \overline{f}(r)\,dA(r)
\end{aligned}
\]
Using monotonicity of $r\mapsto r^2$ one obtains the elementary bound
\[
F_{\rm glo} - D_{\rm glo} = \int_0^{r_0} r^2 (1- \overline{f}(r))\,dA(r) \le r_0^2 M_{\rm glo}.
\]
On the other hand, from the definition of $D_{\rm glo}$ we deduce that
\[
\begin{aligned}
D_{\rm glo} &\ge  r_0^2 \int_{0}^{r_0} \frac{1}{2} (\overline{f}(r)-1)\,dA(r)  + r_2^2 \int_{r_2}^\infty \frac{1}{2} \overline{f}(r)\,dA(r) \\
&= \frac12 (r_2^2-r_0^2)M_{\rm glo}.
\end{aligned}
\]
Combining these two, we get
\bq\label{eq:glo}
F_{\rm glo}\le\Big(\frac{r_2^2+r_0^2}{r_2^2-r_0^2}\Big)D_{\rm glo}.
\eq

\subsection{Control of the local (critical) part}

To control the $F_{\rm loc}$ term, we make use of the following observation: namely, we can choose $\alpha>r_0$ and $r_2>\beta>0$ such that
\begin{align}\label{eq:albe}
M_{\rm loc} = \int_{r_0}^{r_1}  (1 - \ov{f}(r))\,\dA(r) = \int_{\alpha}^{r_1}  \,\dA(r) = \int_{r_1}^{r_2}  \ov{f}(r)\,\dA(r) = \int_{r_1}^{\beta}  \,\dA(r)\,,
\end{align}
since $0\leq 1 - \ov{f}(r)\leq 1$ and $0\leq \ov{f}(r)\leq 1$.
We then note that
\bq\label{eq:alphabeta}
D_{\rm loc} \ge  - \int_{\alpha}^{r_1} \frac{1}{2}r^2 \,\dA(r) + \int_{r_1}^{\beta} \frac{1}{2}r^2 \,\dA(r)=:D_{\rm cri}.
\eq

Note that the right-hand side $D_{\rm cri}$ can be written as
\bq\label{cri}
D_{\rm cri} = \frac{1}{2}\lt(A_2(\beta) - 2A_2(r_1) +A_2(\alpha)\rt)
\eq
where $A_2(x):=\int_0^x r^2 dA(r)$.
This motivates the following auxiliary result, which is the main technical lemma of the paper.

\begin{lemma}\label{lem:crit}
Let $b:[0,\infty)\to [0,\infty)$ satisfy the doubling condition
\begin{equation}\label{eq:adou}
\frac{b(r_1)}{\Lambda_*}  \le b(r_2) \le \Lambda_* b(r_1)
\end{equation}
for some $\Lambda_* \ge 1$ whenever $0 \le r_1 \le r_2 \le 2r_1$. We define 
\[
B_0(R):=\int_0^{R} b(r)\,\dr, \quad B_2(R):=\int_0^R r^2b(r)\,\dr,
\]
for any $R \ge 0$. Suppose that
\bq\label{eq:A_0:bal}
B_0(x) - B_0(y) = B_0(y) - B_0(z)
\eq
for some $0< \frac{1}{2}y \le z< y< x \le 2y$. Then we have
\[
\lt( B_2(x) - B_2(z)\rt)^2\le 128 \Lambda_*^4 y^2 B_0(y)\lt(B_2(x) - 2B_2(y) +B_2(z)\rt).
\]
\end{lemma}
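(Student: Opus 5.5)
The plan is to reparametrize the two intervals $[z,y]$ and $[y,x]$ by the mass $B_0$ rather than by the radius. This turns the second difference $B_2(x)-2B_2(y)+B_2(z)$ into an integral of a difference of squared radii placed symmetrically around $y$. Set
\[
m:=B_0(x)-B_0(y)=B_0(y)-B_0(z),
\]
which is the balance condition \eqref{eq:A_0:bal}. If $b(y)=0$, the doubling condition \eqref{eq:adou} forces $b\equiv0$ on $[y/2,2y]$. Then $m=0$ and both sides vanish, so I assume $b(y)>0$. By \eqref{eq:adou}, $b>0$ on $[z,x]\subset[y/2,2y]$, so $B_0$ is strictly increasing there. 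For $t\in[-m,m]$ define $\tau(t)\in[z,x]$ by $B_0(\tau(t))=B_0(y)+t$, so that $\tau(-m)=z$, $\tau(0)=y$ and $\tau(m)=x$. The substitution $t=B_0(r)-B_0(y)$, i.e.\ $dt=b(r)\,dr$, gives
\[
B_2(x)-B_2(z)=\int_{-m}^{m}\tau(t)^2\,dt,\qquad B_2(x)-2B_2(y)+B_2(z)=\int_0^m\big(\tau(t)^2-\tau(-t)^2\big)\,dt .
\]

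For the left-hand side, $\tau\le x\le 2y$ immediately gives $B_2(x)-B_2(z)\le 2m\cdot 4y^2$. Hence $(B_2(x)-B_2(z))^2\le 64\,y^4m^2$.

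For the right-hand side, two lower bounds are needed.

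\emph{The second difference.} Since $\tau(t)+\tau(-t)\ge 2z\ge y$, we have $\tau(t)^2-\tau(-t)^2\ge y\,(\tau(t)-\tau(-t))$. Moreover, $2t=\int_{\tau(-t)}^{\tau(t)}b\,dr\le (\tau(t)-\tau(-t))\max_{[z,x]}b$. Each $r\in[z,x]$ is comparable to $y$ within a factor $2$, so \eqref{eq:adou} applied to the pair $(y,r)$ or $(r,y)$ gives $b(r)\le\Lambda_*b(y)$. Hence $\tau(t)-\tau(-t)\ge 2t/(\Lambda_* b(y))$, and integrating over $t\in[0,m]$ gives
\[
B_2(x)-2B_2(y)+B_2(z)\ge \frac{y\,m^2}{\Lambda_* b(y)}.
\]

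\emph{The mass $B_0(y)$.} By \eqref{eq:adou} again, $b\ge b(y)/\Lambda_*$ on $[y/2,y]$, so $B_0(y)\ge \frac{y\,b(y)}{2\Lambda_*}$.

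Multiplying the two lower bounds, $y^2B_0(y)\,(B_2(x)-2B_2(y)+B_2(z))\ge \frac{y^4m^2}{2\Lambda_*^2}$. The right-hand side of the claimed inequality is therefore at least $64\Lambda_*^2y^4m^2\ge 64\,y^4m^2$, which bounds $(B_2(x)-B_2(z))^2$. The constant $128\Lambda_*^4$ leaves room to spare.

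The main obstacle is the lower bound on the second difference. The increments of $r^2$ on the two sides of $y$ nearly cancel, and only the ``mass-to-radius'' inverse map $\tau$ exposes a gain that is quadratic in $m$. That gain has to be matched against the factor $b(y)$ hidden in $B_0(y)$. The doubling hypothesis \eqref{eq:adou} is exactly what lets the two occurrences of $b(y)$ cancel. Its use is legitimate only because $z\ge y/2$ and $x\le 2y$ keep every relevant radius within a factor $2$ of $y$.
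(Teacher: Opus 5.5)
Your proof is correct, but it reaches the lemma through a different parametrization than the paper's.

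The paper stays in the radial variable. It uses the balance condition to cancel the $y^2$-multiples of the two equal masses, writing the second difference as $\int_y^x (r^2-y^2)b\,dr+\int_z^y (y^2-r^2)b\,dr$. It bounds this below by $\frac{y\,b(y)}{4\Lambda_*}(x-z)^2$ using $b\ge b(y)/\Lambda_*$ on $[z,x]$. It bounds $B_2(x)-B_2(z)\le 4\Lambda_* y^2 b(y)(x-z)$ using $b\le \Lambda_* b(y)$. So the radial width $x-z$ links the two estimates, and because they are multiplied rather than divided, the case $b(y)=0$ needs no separate treatment.

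You pass to the mass coordinate, so $m$ links the two estimates instead. Your bound $B_2(x)-B_2(z)\le 8y^2m$ costs nothing, with no doubling at all. Your quadratic gain in the second difference comes from the opposite half of the doubling condition: $b\le \Lambda_* b(y)$ forces a mass $2t$ to occupy a radial interval of length at least $2t/(\Lambda_* b(y))$. Both proofs then finish identically with $B_0(y)\ge y\,b(y)/(2\Lambda_*)$.

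The paper's upper comparison costs a factor $\Lambda_*$ that gets squared; yours costs none. You therefore obtain the sharper constant $128\Lambda_*^2$ in place of $128\Lambda_*^4$. Your version also makes the cancellation visible as a symmetric placement of mass around $y$, in the same rearrangement spirit as the choice of $\alpha,\beta$ where the lemma is applied.

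The price is the case split $b(y)=0$, the inversion of $B_0$, and the change of variables $dt=b(r)\,dr$ for an absolutely continuous increasing function. All three are legitimate here, since $b\ge b(y)/\Lambda_*>0$ on $[z,x]$ once $b(y)>0$.

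One cosmetic point: write $\sup_{[z,x]} b$ rather than $\max$, since $b$ need not be continuous. This changes nothing, because you bound it by $\Lambda_* b(y)$ anyway.
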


\begin{proof}
We first decompose
\begin{align*}
B_2(x) - B_2(y) = \int_y^x r^2 b(r)\,\dr 
= y^2 \lt[B_0(x) - B_0(y)\rt] +  \int_y^x (r^2 - y^2)b(r)\,\dr,
\end{align*}
and obtain a lower bound via doubling condition
\begin{align*}
\int_y^x (r^2 - y^2)b(r)\,\dr &\ge \frac{b(y)}{\Lambda_*} \int_y^x  (r^2-y^2)\,\dr.
\end{align*}
Similarly, we decompose
\begin{align*}
B_2(z) - B_2(y) &= -\int_z^y r^2 b(r)\,\dr = -y^2 [B_0(y)- B_0(z)] + \int_z^y (y^2-r^2)b(r)\,\dr
\end{align*}
and derive a lower bound estimate
\begin{align*}
 \int_z^y (y^2-r^2)b(r)\,\dr \ge \frac{b(y)}{\Lambda_*}\int_z^y (y^2-r^2)\,\dr
\end{align*}
where the last inequality follows from the doubling condition.
Summing these two and applying \eqref{eq:A_0:bal}, we obtain
\begin{align}\label{pasta}
B_2(x)-2B_2(y) + B_2(z) &\ge \frac{b(y)}{ \Lambda_*} \int_z^x |y^2-r^2|\,\dr \ge \frac{yb(y)}{\Lambda_*} \int_z^x |y-r|\,\dr \ge \frac{yb(y)}{4\Lambda_*}(x-z)^2,
\end{align}
where we have used $z<y<x$ for the last inequality and minimizing on $y$.
On the other hand,
\[
B_2(x) - B_2(y) \le \Lambda_*b(y)\int_y^x r^2\,\dr 
\]
where we have employed $x < 2y$ and the doubling condition. Similarly, we also get
\[
B_2(y) -B_2(z) \le  \Lambda_* b(y)\int_z^y r^2\,\dr.
\]
Combining these two,
\bq\label{pizza}
B_2(x) -B_2(z) \le  \Lambda_* b(y) \int_z^x r^2\,\dr \le 4\Lambda_* y^2 b(y) (x-z),
\eq
where we have applied $x \le 2y$ for the last inequality.
Hence, by combining \eqref{pasta} and \eqref{pizza}, we obtain
\begin{align}\label{carbonara}
[B_2(x) -B_2(z)]^2 &\le 64 \Lambda_*^3y^3 b(y) \lt(B_2(x) - 2B_2(y) +B_2(z) \rt).
\end{align}
Finally, we notice that
\[
B_0(y) = \int_0^y b(r)\,\dr \ge \int_{y/2}^y b(r) \,\dr \ge \int_{y/2}^y\frac{b(y)}{\Lambda_*} \,\dr = \frac{yb(y)}{2\Lambda_*}.
\]
Inserting this inequality into \eqref{carbonara}, we conclude the proof.
\end{proof}

To apply Lemma \ref{lem:crit} to $a(r)$ defined in \eqref{eq:a}, we need to confirm the following:
\begin{lemma}\label{lem:a_conf}
The function $a(\cdot)$ defined in \eqref{eq:a} satisfies the doubling condition \eqref{eq:adou}.
\end{lemma}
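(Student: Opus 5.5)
We need to show that $a(r)=r^N s(r)$, with $s(r)=\int_{\bbS_N^+}\mu(r\cos\theta)\,d\sigma(\theta)$ and $\mu(I)=(\psi^{-1})'(\tfrac12 I^2)\,I$, satisfies \eqref{eq:adou}. The plan is to prove separately that $r\mapsto r^N$ and $r\mapsto s(r)$ are doubling on comparable radii, and then multiply the constants. The factor $r^N$ is immediate: for $0\le r_1\le r_2\le 2r_1$ we have $r_1^N\le r_2^N\le 2^N r_1^N$.

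The work is in $s(r)$. Write
\[
s(r)=\int_{\bbS_N^+}(\psi^{-1})'\!\big(\tfrac12 r^2\cos^2\theta\big)\,r\cos\theta\,d\sigma(\theta).
\]
For $r_1\le r_2\le 2r_1$ set $t=\tfrac12 r_1^2\cos^2\theta$. The argument at $r_2$ is then $\lambda t$ with $\lambda=(r_2/r_1)^2\in[1,4]$. Assumption (A3) compares $(\psi^{-1})'$ at points $s_1<s_2<2s_1$. I split the ratio $\lambda\le4$ into at most three steps, each by a factor less than $2$, for instance $t\to\lambda^{1/3}t\to\lambda^{2/3}t\to\lambda t$, since $\lambda^{1/3}\le 4^{1/3}<2$. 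Applying (A3) to each step gives
\[
\Lambda^{-3}(\psi^{-1})'(t)\le(\psi^{-1})'(\lambda t)\le\Lambda^3(\psi^{-1})'(t).
\]
The case $t=0$ (that is, $\cos\theta=0$) is a null set, and in any case (A3) as stated allows $s_1=0$. The linear factor $r\cos\theta$ changes by a factor in $[1,2]$. Integrating over $\bbS_N^+$ pointwise in $\theta$ then yields
\[
\Lambda^{-3}s(r_1)\le s(r_2)\le 2\Lambda^3 s(r_1).
\]
Combining the two factors, \eqref{eq:adou} holds with $\Lambda_*=2^{N+1}\Lambda^3$. The lower bound only needs $\Lambda_*\ge\Lambda^3$, which this choice covers since $r_2^N\ge r_1^N$.

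The only delicate point is matching the form of (A3), which is stated with strict inequalities $s_1<s_2<2s_1$, to a scaling factor up to $4$. The chaining above handles this; the endpoint cases $r_2=r_1$ (trivial) and $r_2=2r_1$ are covered because every intermediate ratio stays strictly below $2$. I would also note, for later use, that $s(r)$ is finite and positive. This follows since $(\psi^{-1})'>0$ by (A1), and $\mu$ is continuous on $(0,\infty)$, hence bounded on compact sets; continuity of $(\psi^{-1})'$ near $0$ may fail, but (A3) bounds it on dyadic ranges, so integrability of $s(r)$ is not a concern beyond what is already implicit in the definitions.
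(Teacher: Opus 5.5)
Your proof is correct and follows the paper's argument. You factor $a(r)=r^Ns(r)$, use $r_2^N/r_1^N\in[1,2^N]$ and $r_2\cos\theta/(r_1\cos\theta)\in[1,2]$, and compare $(\psi^{-1})'$ at arguments differing by the factor $(r_2/r_1)^2\le 4$ via (A3), pointwise in $\theta$. The paper chains two doubling steps to get $\Lambda_*=2^{N+1}\Lambda^2$, tacitly extending (A3) to the endpoint ratio $2$ by continuity of $(\psi^{-1})'$. Your three steps of ratio $(r_2/r_1)^{2/3}<2$ stay within the strict inequalities of (A3), at the harmless cost of $\Lambda^3$.

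One caveat on your closing aside: (A3) only controls ratios, so it still allows blow-up like $u^{-\log_2\Lambda}$ at $u=0$ and does not by itself give finiteness of $s(r)$. The possible singularity of $\mu$ at $I=0$ is integrable for a different reason, namely $\int_0^{I_0}\mu(I)\,dI=\psi^{-1}(\tfrac12 I_0^2)<\infty$.
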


\begin{proof}
We recall \eqref{eq:a} that $a(r)= r^N s(r)$. Thus, it suffices to study the doubling property of $s(r)$. Indeed, from \eqref{eq:mu} and \eqref{eq:proj}, we have
\[
s(r)= \int_{\bbS_{N}^+} \mu(r\cos\theta)\dsi,\quad \mu(I)= (\psi^{-1})'\lt(\frac{1}{2}I^2\rt) I. 
\]
For $r_2 \le 2r_1$, by (A3)  \eqref{eq:double}, we notice that
\[
s(r_2) \le 2\Lambda^2 s(r_1).
\]
Similarly, $r_1 \le r_2$ implies
\[
s(r_2) \ge \frac{1}{\Lambda^2}s(r_1).
\]
Hence, we obtain
\[
\frac{1}{\Lambda^2}a(r_1) \le a(r_2) \le 2^{N+1}\Lambda^2 a(r_1),
\]
satisfying \eqref{eq:adou} with $\Lambda_*:= 2^{N+1} \Lambda^2$.
\end{proof}

\begin{proof}[Proof of Proposition \ref{prop:diss}]
The delicate part of the proof lies in the estimate of $F_{\rm loc}$ term, where we apply Lemma \ref{lem:crit} to $a(r)$ as defined in \eqref{eq:a}. To this end, we first choose $r_0:=\lt(1-\tfrac{1}{2\Lambda_*^2}\rt)r_1$ where $\Lambda_* \ge 1$ as in the proof of Lemma \ref{lem:a_conf}. Recall the definitions of $r_2$ in \eqref{eq:r1r2}. Since $r_2 \ge r_1$, we have
\[
\frac{r_2^2+r_0^2}{r_2^2-r_0^2} = 1 + \frac{2}{\frac{r_2^2}{r_0^2}-1}\le 1+\frac{2}{\lt(1-\frac{1}{2\Lambda_*^2}\rt)^{-2}-1}=:C^{(1)},
\]
so that
\bq\label{eq:Fglo}
F_{\rm glo} \le C^{(1)}D_{\rm glo}
\eq
by \eqref{eq:glo}, where $C^{(1)}$ depends on $\Lambda$ and $N$.

We then recall the definition of $\alpha$, $\beta$ in \eqref{eq:albe}. It is easy to notice that $\frac{1}{2}r_1\le r_0\le \alpha\le r_1$. Next, we claim $\beta \le 2r_1$. Suppose not, then \eqref{eq:albe} and Lemma \ref{lem:a_conf} implies
\[
M_{\rm loc} = \int_{r_1}^{\beta} dA > \int_{r_1}^{2r_1}dA \ge \frac{a(r_1)}{\Lambda_*}r_1.
\]
On the other hand, using again the doubling condition on $a$ due to Lemma \ref{lem:a_conf} 
\[
M_{\rm loc} = \int_{\alpha}^{r_1} dA \le \Lambda_*a(r_1)(r_1-\alpha) \le \Lambda_*a(r_1)(r_1-r_0)=\frac{a(r_1)}{2\Lambda_*}r_1,
\]
yielding a contradiction. This proves the claim.

By applying Lemma \ref{lem:crit} to $a(r)$ with $(x,y,z):=(\alpha, r_1, \beta)$, we deduce that there exists a constant $C_{N,\Lambda}$ such that
\[
F_{\rm loc}^2 \le C_{N,\Lambda}^2\,r_1^2 \rho_f\,D_{\rm cri}
\]
 using \eqref{cri} and that $B_0(r_1)=A(r_1) = \rho_f$. Appealing \eqref{eq:alphabeta} to recall $D_{\rm cri} \le D_{\rm loc}$ and that $r_1^2=2h_\psi(\rho_f)$ by definition, we obtain
$$
F_{\rm loc}\le C_{N,\Lambda}\,\sqrt{2\rho_f h_\psi(\rho_f)}\,\sqrt{D_{\rm loc}}.
$$
Finally, we recall Corollary \ref{cor:sh(s)} that shows
$$
\rho_f h_\psi(\rho_f)\le 2^{\frac{2}{N}+1}\Phi_\psi(\rho_f),
$$
that finally implies
\bq\label{eq:Floc}
F_{\rm loc}\le C_{N,\Lambda}\,\sqrt{\Phi_\psi(\rho_f)}\,\sqrt{D_{\rm loc}},
\eq
with a suitable constant $C_{N,\Lambda}$ that we denote the same for the sake of simplicity.

Since $D_{\rm glo}, D_{\rm loc} \ge 0$, combining the estimates \eqref{eq:Fglo}--\eqref{eq:Floc} for the local and global parts gives the desired bound \eqref{eq:key}. 

\end{proof}

\section{Main Results: Hydrodynamic limit}\label{sec:mr}

\subsection{Statement of main results}

Our main results concern the convergence of macroscopic quantities associated to \eqref{main_eq} toward the following barotropic Euler equations as $\e \to 0$:
\bq \label{E}
\begin{cases}
\pa_t \rho + \nabla_x \cdot (\rho u) = 0,\\
\pa_t (\rho u ) + \nabla_x \cdot (\rho u \otimes u) + \nabla_x P_\psi(\rho) = 0.
\end{cases}
\eq

The aim is to approximate the solution of \eqref{E} from the local conservation law \eqref{eq:f_cons}. To this end, we briefly introduce the notion of relative entropy. This idea is based on the weak-strong uniqueness principle \cite{Daf79}. The entropy of \eqref{E} is defined as 
\[
\eta(U):= \frac{1}{2}\rho |u|^2 + \Phi_\psi(\rho)
\]
where $U:=(\rho,\rho u)^T$ and $\Phi_\psi$ is defined in \eqref{eq:P,Phi}. Recall that $\Phi_\psi$ is convex (Lemma \ref{cor:sh(s)}). For classical solutions of \eqref{E}, we have 
\[%\label{eq:conE}
\frac{d}{dt}\int\frac{1}{2}\rho |u|^2 + \Phi_\psi(\rho) \, \dx = 0.
\]
The relative entropy $\eta(U^\e|U)$ is defined as 
\[%\label{eq:re}
\begin{split}
\eta(U^\e|U) :=& \,\eta(U^\e) - \eta(U) - d\eta(U) \cdot (U^\e-U) \cr
=&  \,\frac{\rho^\e}{2}|u^\e-u|^2 + \Phi_\psi(\rho^\e|\rho)
\end{split}
\]
where
\[
d\eta(U)=(h_\psi(\rho)-\frac{1}{2}|u|^2,u)^T
\]
and
\[
\Phi_\psi(\rho^\e|\rho) = \Phi_\psi(\rho^\e) - \Phi_\psi(\rho) - \Phi_\psi'(\rho)(\rho^\e-\rho).
\]

We now state our main result on the hydrodynamic limit to barotropic Euler equations \eqref{E} from the kinetic Fokker--Planck equations \eqref{main_eq}.
\begin{theorem}\label{thm:hdr}
Assume (A1)--(A5) hold. Let $(\rho, u) \in \calC^1([0,T^*]\times\R^N)$ be given a classical solution of \eqref{E} having finite initial entropy $\eta(U_0) \in L^1(\R^N)$. For any $\e \in(0,1]$, let $f^\e$ be a weak solution of \eqref{main_eq} satisfying the kinetic entropy inequality 
\bq\label{eq:kep}
\iint H(f^\e(t),v)\,\dx\dv + \frac{1}{\e}\int_0^t \intr  I(f^\e(s))\,\dx\ds \le \iint H(f^\e_0,v)\,\dx\dv
\eq
for all $0\le t \le T^*$, where  
\[
H(f,v):= \frac{1}{2}|v|^2 f + \Psi (f)
\]
with the dissipation term $I(\cdot)$ as defined in \eqref{eq:F,I}.

We further assume that initial datum $f^\e_0$ is well-prepared in the sense that $\|f^\e_0\|_{L^1}=\|\rho_0\|_{L^1}=1$,
\[
\textnormal{(H0)}\quad \int \eta(U_0^\e)\,\dx \le C_A,\qquad \textnormal{(H1)}\quad \int \eta(U^\e_0|U_0)\,\dx  \le C_A\e^{1/2},
\]
together with compatibility condition
\[
\textnormal{(H2)}\quad \iint H(f^\e_0,v)\,\dx\dv \le  \int \eta(U^\e_0)\dx + C_A\e^{1/2},
\]
then there exists $C_*>0$ depending only on $C_A, N, T^*, \psi$, $\|\nabla u\|_{L^\infty((0,T^*)\times\R^N)}$, and $\|\eta(U_0)\|_{L^1}$  such that the following quantitative estimates hold for the relative entropy:
\bq\label{mnre}
\sup_{0\le t \le T^*}\int \eta(U^\e_t|U_t)\,\dx \le C_*\e^{\frac{1}{2}}.
\eq
As a result, we have strong convergence of $\rho^\e \to \rho$, $\rho^\e u^\e \to \rho u$ and $\rho^\e u^\e \otimes u^\e \to \rho u \otimes u$ in $L^\infty([0,T^*];L^1_x)$ and $\intr (v-u^\e)\otimes(v-u^\e)f^\e\,\dv \to P_{\psi}(\rho) $ in $L^1_{\rm loc}([0,T^*]\times \R^N)$.
\end{theorem}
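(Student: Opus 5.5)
We follow the relative entropy method of Berthelin--Vasseur, adapted to the degenerate setting via Proposition \ref{prop:diss} and the generalized Log-Sobolev inequality \eqref{eq:LSI}.

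\textbf{Step 1: relative entropy identity.} Write $U^\e=(\rho^\e,\rho^\e u^\e)$ with $\rho^\e=\rho_{f^\e}$, $u^\e=u_{f^\e}$. Using the conservation laws \eqref{eq:f_cons} for $U^\e$ and the fact that $U$ is a classical solution of \eqref{E}, the standard computation gives
\[
\int \eta(U^\e_t|U_t)\,\dx \le \int \eta(U^\e_0|U_0)\,\dx + \Big[\int \eta(U^\e)\dx\Big]_0^t - \int_0^t\!\!\int \nabla u : \Big(\rho^\e (u^\e-u)\otimes(u^\e-u) + P_\psi(\rho^\e|\rho) I_N\Big)\dx\ds - \int_0^t\!\!\int \nabla u : E^\e\,\dx\ds,
\]
where $P_\psi(\rho^\e|\rho)=P_\psi(\rho^\e)-P_\psi(\rho)-P_\psi'(\rho)(\rho^\e-\rho)$ and $E^\e:=\int (v-u^\e)\otimes(v-u^\e)(f^\e-M_\psi[f^\e])\,\dv$ is the pressure error. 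Assumption (A5) is used to show $|P_\psi(\rho^\e|\rho)|\le C\Phi_\psi(\rho^\e|\rho)$ (since $P_\psi''=h_\psi'+\rho h_\psi''\lesssim h_\psi'=\Phi_\psi''$). Hence the third term is bounded by $C\|\nabla u\|_\infty\int_0^t\int\eta(U^\e|U)$.

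\textbf{Step 2: entropy bookkeeping.} By \eqref{eq:comp}, $\int\eta(U^\e)\dx=\iint H(M_\psi[f^\e])$, so by \eqref{eq:kep} and (H2),
\[
\int\eta(U^\e_t)\dx - \int \eta(U^\e_0)\dx + \int R\calF(f^\e_t)\dx + \frac1\e\int_0^t\!\!\int I(f^\e)\le C_A\e^{1/2},
\]
where $R\calF(f)=\iint H(f)-\iint H(M_\psi[f])\ge 0$ by \eqref{eq:minpr}. Then \eqref{eq:LSI} gives $R\calF(f^\e)\le \tfrac12 I(f^\e)$ pointwise in $(t,x)$. Consequently $\int_0^t\int R\calF(f^\e)\le \e\,\big(\iint H(f_0^\e)\big)\le C\e$ using (H0),(H2), and the dissipation $\frac1\e\int_0^t\int I$ is available on the left.

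\textbf{Step 3: pressure error (main step).} By Proposition \ref{prop:diss},
\[
\int_0^t\!\!\int |E^\e| \le C\int_0^t\!\!\int \Big(\sqrt{\Phi_\psi(\rho^\e)}\sqrt{R\calF(f^\e)} + R\calF(f^\e)\Big) \le C\Big(\int_0^t\!\!\int \Phi_\psi(\rho^\e)\Big)^{1/2}\Big(\int_0^t\!\!\int R\calF\Big)^{1/2} + C\e.
\]
The uniform bound $\int\Phi_\psi(\rho^\e)\le\int\eta(U^\e)\le \iint H(f^\e_0)\le C$ from (H0),(H2),\eqref{eq:kep} gives $\int_0^t\int|E^\e|\le C\sqrt{T^*}\e^{1/2}$, and multiplying by $\|\nabla u\|_\infty$ controls the last term by $C\e^{1/2}$. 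This is where the $\e^{1/2}$ rate comes from, and the delicate point is that Proposition \ref{prop:diss} only gives a square-root bound, hence the need for the uniform $\Phi_\psi$ bound rather than the relative entropy itself.

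\textbf{Step 4: Gronwall and consequences.} Combining, $\int\eta(U^\e_t|U_t)\le C\e^{1/2}+C\int_0^t\int\eta(U^\e|U)$, and Gronwall gives \eqref{mnre}. Strong convergence of $\rho^\e$ follows from strict convexity of $\Phi_\psi$ with the asymptotics of Lemma \ref{lem:h} (locally $\Phi_\psi(\rho^\e|\rho)$ controls $|\rho^\e-\rho|^2$ or $|\rho^\e-\rho|$, and mass $1$ plus tightness handles infinity). For the momentum, write $\rho^\e u^\e-\rho u=\rho^\e(u^\e-u)+(\rho^\e-\rho)u$ and apply Cauchy--Schwarz with $\int\rho^\e|u^\e-u|^2$. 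The quadratic term is handled similarly. The pressure tensor converges because $E^\e\to0$ in $L^1$ by Step 3 and $P_\psi(\rho^\e)\to P_\psi(\rho)$ in $L^1_{\rm loc}$ by continuity and the bound $P_\psi\lesssim\Phi_\psi$.
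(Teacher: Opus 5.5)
Your proposal follows the paper's proof step for step:
\begin{itemize}
\item the relative entropy identity of Lemma \ref{lem:fml};
\item Lemma \ref{olive}, obtained from (A5);
\item the bound on $\int\eta(U^\e_t)\,\dx-\int\eta(U^\e_0)\,\dx$ via \eqref{eq:comp}, \eqref{eq:minpr}, \eqref{eq:kep} and (H2);
\item Proposition \ref{prop:diss} with Cauchy--Schwarz, the uniform bound on $\int\Phi_\psi(\rho^\e)\,\dx$, and \eqref{eq:LSI} giving $\int_0^t\!\int R\calF(f^\e)\le C\e$;
\item Gr\"onwall.
\end{itemize}
Applying Cauchy--Schwarz jointly in $(t,x)$, rather than in $x$ and then in $t$, makes no difference.

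The pressure convergence step needs one more line. Continuity of $P_\psi$ together with $P_\psi\lesssim\Phi_\psi$ only gives a uniform $L^1$ bound on $P_\psi(\rho^\e)$. It does not give equi-integrability, so it does not by itself yield $P_\psi(\rho^\e)\to P_\psi(\rho)$ in $L^1_{\rm loc}$. There are two ways to close this.
\begin{itemize}
\item Write $P_\psi(\rho^\e)-P_\psi(\rho)=P_\psi(\rho^\e|\rho)+P_\psi'(\rho)(\rho^\e-\rho)$. Then use the estimate $|P_\psi(\rho^\e|\rho)|\le C\Phi_\psi(\rho^\e|\rho)$ from your Step 1, together with \eqref{mnre}, the local boundedness of $P_\psi'(\rho)$, and the $L^1_{\rm loc}$ convergence of $\rho^\e$. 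This is essentially what the paper does.
\item Alternatively, first observe that $\Phi_\psi(\rho^\e)=\Phi_\psi(\rho^\e|\rho)+\Phi_\psi(\rho)+h_\psi(\rho)(\rho^\e-\rho)\to\Phi_\psi(\rho)$ in $L^1_{\rm loc}$. Then apply the generalized dominated convergence theorem along a.e.-convergent subsequences.
\end{itemize}
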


\begin{remark}
We remark that the condition (H2) is related to the following inequality
\[
\int H(f_0^\e,v)\,\dv \ge \eta(U_0^\e)
\]
 by Corollary \ref{cor:basic}. The (H2) condition ensures that the kinetic entropy behaves as its macroscopic part.
\end{remark}

\begin{corollary}\label{quant}
Under the hypotheses of Theorem \ref{thm:hdr} and in the particular case of  $\psi=\psi_m$ as in Lemma \ref{prop:pl} so that $P_{\psi_m}(\rho)=\theta \rho^\gamma$, we derive quantified convergence rate of macroscopic quantities as: 
\begin{itemize}
\item the density
\[
\|\rho^\e - \rho\|_{L^\infty(0,T^*;L^\gamma)} \le C_*\e^{\beta},
\]
\item
the momentum
\[
\|\rho^\e u^\e - \rho u\|_{L^\infty(0,T^*;L^{\frac{2\gamma}{\gamma+1}})} \le C_*\e^{\beta},
\]
\item
the flux
\[
\|\rho^\e u^\e\otimes u^\e - \rho u\otimes u\|_{L^\infty(0,T^*;L^1)} \le C_*\e^{\beta}, 
\]
\item
the pressure tensor
\[
\lt\|\int (v-u^\e)\otimes (v-u^\e) f^\e \dv -\theta\rho^\gamma \mathbb{I}_N \rt\|_{L^1(0,T^*;L^1)} \le C_*\e^{\beta},
\]
\end{itemize}
where $\beta=\frac{1}{2\max\{2,\gamma\}}$.
\end{corollary}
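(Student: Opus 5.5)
The plan is to derive every rate from two inputs. The first is the relative entropy estimate \eqref{mnre} of Theorem \ref{thm:hdr}, which gives $E:=\sup_{t}\int\eta(U^\e_t|U_t)\,\dx\le C_*\e^{1/2}$. The second is a dissipation bound coming directly from the kinetic entropy inequality \eqref{eq:kep}. Since $H\ge0$, \eqref{eq:kep} together with (H0) and (H2) gives
\[
\int_0^{T^*}\!\!\iint I(f^\e)\,\dx\,\ds\le \e\Big(C_A+C_A\e^{1/2}\Big)\le C\e .
\]
Throughout, $P_{\psi_m}(\rho)=\theta\rho^\gamma$ and $\Phi_{\psi_m}(\rho)=\frac{\theta}{\gamma-1}\rho^\gamma$, so $\int\Phi(\rho^\e)\,\dx$ is bounded uniformly in $\e$ and $t$. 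This follows from (H0) and the entropy inequality via the minimization property in Corollary \ref{cor:basic}. Hence $\rho^\e$ and $\rho$ are bounded in $L^\infty_tL^1\cap L^\infty_tL^\gamma$.

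\textbf{Density.} I will use the elementary pointwise lower bound for the relative convex power
\[
\Phi(a|b)\ge c_\gamma\begin{cases}|a-b|^\gamma,&\gamma\ge2,\\ |a-b|^2(a+b)^{\gamma-2},&1<\gamma<2.\end{cases}
\]
For $\gamma\ge2$ this immediately gives $\|\rho^\e-\rho\|_{L^\gamma}^\gamma\le CE$. For $\gamma<2$, I write $|a-b|^\gamma=\big(|a-b|^2(a+b)^{\gamma-2}\big)^{\gamma/2}(a+b)^{\gamma(2-\gamma)/2}$ and apply H\"older with exponents $\frac2\gamma,\frac2{2-\gamma}$, using the $L^\gamma$ bounds. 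This gives $\|\rho^\e-\rho\|_{L^\gamma}^\gamma\le CE^{\gamma/2}$. In both cases $\|\rho^\e-\rho\|_{L^\gamma}\le CE^{\min\{1/\gamma,1/2\}}\le C\e^{\frac{1}{2\max\{2,\gamma\}}}=C\e^\beta$. This exponent bookkeeping is exactly where $\beta$ comes from.

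\textbf{Momentum and flux.} I will split
\[
\rho^\e u^\e-\rho u=\sqrt{\rho^\e}\,\sqrt{\rho^\e}(u^\e-u)+(\rho^\e-\rho)u .
\]
For the first piece, H\"older with $\frac{\gamma+1}{2\gamma}=\frac12+\frac1{2\gamma}$ gives the bound $\|\sqrt{\rho^\e}\|_{L^{2\gamma}}\|\sqrt{\rho^\e}(u^\e-u)\|_{L^2}\le CE^{1/2}$. For the second piece I use $u\in L^\infty$. I then control $\|\rho^\e-\rho\|_{L^{2\gamma/(\gamma+1)}}$ by the same H\"older/relative-entropy splitting as above, now with the exponent $q=\frac{2\gamma}{\gamma+1}\le\min\{2,\gamma\}$. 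This again gives a power of $E$ that is at least $E^{\min\{1/\gamma,1/2\}}$, the weights $(a+b)^{\dots}$ being absorbed by the $L^1\cap L^\gamma$ bounds.

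For the flux I expand
\[
\rho^\e u^\e\otimes u^\e-\rho u\otimes u=\rho^\e(u^\e-u)\otimes(u^\e-u)+\rho^\e(u^\e-u)\otimes u+u\otimes\rho^\e(u^\e-u)+(\rho^\e-\rho)u\otimes u .
\]
In $L^1$ the first term is at most $E$, and the middle terms are at most $\|u\|_\infty(\int\rho^\e)^{1/2}E^{1/2}$. The last term needs an $L^1$ bound on $|\rho^\e-\rho|$ weighted by $|u|^2$. Here I use the same splitting with weight $(a+b)^{\gamma-2}$ where needed, together with finite mass of both densities.

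\textbf{Pressure tensor.} I will write
\[
\int(v-u^\e)\otimes(v-u^\e)f^\e\,\dv-\theta\rho^\gamma\mathbb I_N=\int(v-u^\e)\otimes(v-u^\e)(f^\e-M_{\psi_m}[f^\e])\,\dv+\theta\big((\rho^\e)^\gamma-\rho^\gamma\big)\mathbb I_N ,
\]
using Remark \ref{rmk:PbyL} and \eqref{eq:sm}. The first term is bounded pointwise in $x$ by Proposition \ref{prop:diss}. I then use the generalized Log-Sobolev inequality \eqref{eq:LSI}, $R\calF(f^\e)\le\frac12I(f^\e)$, and Cauchy--Schwarz in $(t,x)$:
\[
\int_0^{T^*}\!\!\int\Big(\sqrt{\Phi(\rho^\e)}\sqrt{I}+I\Big)\le \Big(T^*\sup_t\!\int\Phi(\rho^\e)\Big)^{1/2}(C\e)^{1/2}+C\e\le C\e^{1/2}\le C\e^\beta .
\]
For the second term I use $|a^\gamma-b^\gamma|\le C\Phi(a|b)+\gamma b^{\gamma-1}|a-b|$. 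Its $L^1_x$ norm is therefore at most $CE+\gamma\|\rho\|_{L^\gamma}^{\gamma-1}\|\rho^\e-\rho\|_{L^\gamma}\le C\e^\beta$ by the density step.

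I expect the main obstacle to be the bookkeeping for the terms $(\rho^\e-\rho)u$ and $(\rho^\e-\rho)u\otimes u$ in exponents strictly below $\gamma$ on the whole space. The relative entropy only controls $|\rho^\e-\rho|$ in an $L^\gamma$/weighted-$L^2$ sense, so transferring this to $L^{2\gamma/(\gamma+1)}$ or $L^1$ requires the weighted H\"older splitting and the uniform mass and $L^\gamma$ bounds. I would first try to handle these pieces by splitting $\{\rho^\e\le2\rho\}$ and its complement. On the complement, $\Phi(a|b)\gtrsim a^\gamma\gtrsim |a-b|^\gamma$. I would then check that the resulting power of $E$ never falls below $E^{1/\max\{2,\gamma\}}$.
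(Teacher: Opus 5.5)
The density and pressure-tensor steps are correct and essentially follow the paper. The weighted H\"older splitting is the content of Lemmas \ref{lem:ple2}--\ref{lem:pge2}, and the kinetic error is the $J_4$-type bound via Proposition \ref{prop:diss} and \eqref{eq:LSI}.

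\textbf{The gap.} It is in the momentum and flux steps, at the point you flagged yourself. Pairing $(\rho^\e-\rho)u$ and $(\rho^\e-\rho)u\otimes u$ with $u\in L^\infty$ forces you to bound $\|\rho^\e-\rho\|_{L^q(\R^N)}$, with $q=\frac{2\gamma}{\gamma+1}<\gamma$, and $\|\rho^\e-\rho\|_{L^1(\R^N)}$ at rate $\e^\beta$. The relative entropy does not give this on the whole space. For $\gamma<2$ your splitting reads
\[
\int|a-b|^q\,\dx\le\Big(\int|a-b|^2(a+b)^{\gamma-2}\,\dx\Big)^{\frac{q}{2}}\Big(\int(a+b)^{\gamma(2-\gamma)}\,\dx\Big)^{\frac{2-q}{2}},
\]
and the analogous $L^1$ splitting needs $\int(a+b)^{2-\gamma}\,\dx$. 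Both exponents are below one: $\gamma(2-\gamma)=1-(\gamma-1)^2<1$ and $2-\gamma<1$. So neither weight is controlled by $L^1\cap L^\gamma$ bounds on $\R^N$.

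Splitting along $\{\rho^\e\le 2\rho\}$ does not rescue this. On the complement you only recover $|\rho^\e-\rho|^\gamma\lesssim\Phi(\rho^\e|\rho)$, which is $L^\gamma$ control again. Wherever one density dominates the other, $\Phi(\rho^\e|\rho)$ is comparable to $|\rho^\e-\rho|^\gamma$. A discrepancy spread thinly over a large volume is therefore cheap in relative entropy but not small in $L^1$ or in $L^q$ for $q<\gamma$.

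What you can actually extract is interpolation:
\[
\|\rho^\e-\rho\|_{L^q}\le\|\rho^\e-\rho\|_{L^1}^{1/2}\|\rho^\e-\rho\|_{L^\gamma}^{1/2}\lesssim\e^{\beta/2}.
\]
This is only half the claimed exponent for the momentum. For $\|(\rho^\e-\rho)\,u\otimes u\|_{L^1}$ you get nothing beyond boundedness.

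\textbf{The fix.} Put the integrability on $u$ rather than on $\rho^\e-\rho$. Since $\frac{\gamma+1}{2\gamma}=\frac{1}{\gamma}+\frac{\gamma-1}{2\gamma}$ and $1=\frac{1}{\gamma}+2\cdot\frac{\gamma-1}{2\gamma}$,
\[
\|(\rho^\e-\rho)u\|_{L^{\frac{2\gamma}{\gamma+1}}}\le\|\rho^\e-\rho\|_{L^\gamma}\|u\|_{L^{\frac{2\gamma}{\gamma-1}}},\qquad \|(\rho^\e-\rho)\,u\otimes u\|_{L^1}\le\|\rho^\e-\rho\|_{L^\gamma}\|u\|_{L^{\frac{2\gamma}{\gamma-1}}}^2 .
\]
Then only your $L^\gamma$ density rate is needed, and this is exactly how the paper proceeds.

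This route requires $u\in L^\infty(0,T^*;L^{\frac{2\gamma}{\gamma-1}}(\R^N))$. That is not among the listed hypotheses of Theorem \ref{thm:hdr}; the paper uses it tacitly, so you should state it explicitly as an assumption on the strong solution.

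With this replacement the rest of your argument goes through. The cross terms $\rho^\e(u^\e-u)\otimes u$ can be handled either by your $\|u\|_{L^\infty}\|\rho^\e\|_{L^1}^{1/2}E^{1/2}$ bound or by the same H\"older pairing with $\|\sqrt{\rho^\e}\|_{L^{2\gamma}}$.
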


\subsection{Estimates for relative entropy}
In this section, we prove \eqref{mnre} in Theorem \ref{thm:hdr}. We denote
\[
U:=(\rho, \rho u)^T,\quad U^\e:= (\rho_{f^\e}, m_{f^\e})^T
\]
where $(\rho, u)$ is a local strong solution of \eqref{E} and $f$ is a weak solution of \eqref{main_eq} satisfying the assumptions stated in Theorem \ref{thm:hdr}. The proof is rather standard; we refer to \cite{BV05} for the proof. 

\begin{lemma}\label{lem:fml} It holds that
\[
\begin{split}
\frac{d}{dt}\int \eta(U^\e_t|U_t) \dx =  \frac{d}{dt} \int \eta(U^\e_t)\dx\, -\int \nabla u : \lt(G(U^\e_t|U_t) + \calR^\e \rt) \dx 
\end{split}
\]
where 
\bq\label{eq:G}
G(U^\e_t|U_t):= \rho^\e (u^\e -u)\otimes (u^\e - u) + P_\psi (\rho^\e|\rho)\bbI_N
\eq
and
\bq\label{eq:pe}
\calR^\e:= \int_{\mathbb R^N} (v-u^\e)\otimes(v-u^\e)\,f^\e\,dv - P_\psi(\rho^\e)\bbI_N
\eq
\end{lemma}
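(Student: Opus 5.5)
The identity in Lemma \ref{lem:fml} is the standard relative entropy computation. It rests only on the local conservation laws \eqref{eq:f_cons} for $U^\e$ and on the Euler system \eqref{E} for $U$. The plan is to expand
\[
\eta(U^\e|U)=\eta(U^\e)-\eta(U)-d\eta(U)\cdot(U^\e-U)
\]
and differentiate each piece in time. The piece $\frac{d}{dt}\int\eta(U^\e)\,\dx$ is kept as is. Since $U$ is a classical solution, $\frac{d}{dt}\int \eta(U)\,\dx=0$. The only real work is therefore the cross term
\[
\frac{d}{dt}\int d\eta(U)\cdot(U^\e-U)\,\dx,
\qquad d\eta(U)=\big(h_\psi(\rho)-\tfrac12|u|^2,\;u\big)^T .
\]

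For the cross term we use the product rule. First we handle $\partial_t d\eta(U)$. From \eqref{E}, classical solutions satisfy
\[
\partial_t u+(u\cdot\nabla)u+\nabla h_\psi(\rho)=0,
\]
because $\nabla P_\psi(\rho)=\rho\nabla h_\psi(\rho)$ (indeed $P_\psi'=\rho h_\psi'$ by \eqref{eq:P,Phi}). Hence
\[
\partial_t\big(h_\psi-\tfrac12|u|^2\big)=-h_\psi'(\rho)\nabla\cdot(\rho u)+u\cdot\big((u\cdot\nabla)u+\nabla h_\psi\big).
\]
Next we handle $\partial_t(U^\e-U)$. We substitute the flux divergences from \eqref{eq:f_cons} and \eqref{E}, writing the kinetic pressure tensor as $P_\psi(\rho^\e)\bbI_N+\calR^\e$ using \eqref{eq:pe}. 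We then integrate by parts in $x$, which moves derivatives onto $d\eta(U)$ and produces terms of the form $\nabla(h_\psi(\rho)-\frac12|u|^2)\cdot\rho^\e u^\e$ and $\nabla u:(\rho^\e u^\e\otimes u^\e+P_\psi(\rho^\e)\bbI_N+\calR^\e)$.

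The main bookkeeping step is to regroup everything into $\nabla u:G(U^\e|U)$. The velocity terms combine algebraically into $\nabla u:\rho^\e(u^\e-u)\otimes(u^\e-u)$, using $\rho^\e u^\e=\rho^\e(u^\e-u)+\rho^\e u$ and the Euler equation for $u$. The pressure terms must assemble into
\[
P_\psi(\rho^\e|\rho)=P_\psi(\rho^\e)-P_\psi(\rho)-P_\psi'(\rho)(\rho^\e-\rho),
\]
multiplied by $\nabla\cdot u$. This uses the identity
\[
\nabla h_\psi(\rho)\cdot\rho^\e u=\tfrac{\rho^\e}{\rho}\nabla P_\psi(\rho)\cdot u
\]
together with $\nabla\cdot(\rho u)=\rho\nabla\cdot u+u\cdot\nabla\rho$, so that the transport parts cancel and only $-P_\psi'(\rho)(\rho^\e-\rho)\nabla\cdot u$ survives alongside $P_\psi(\rho^\e)\nabla\cdot u-P_\psi(\rho)\nabla\cdot u$. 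This is exactly the computation in \cite{BV05} with $\theta\rho$ replaced by $P_\psi(\rho)$, and I expect the only obstacle to be tracking signs in this regrouping.

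The remaining term is $\nabla u:\calR^\e$, which is kept untouched. Justifying the integrations by parts for a weak $f^\e$ uses the finite entropy bound \eqref{eq:kep} and the $\calC^1$ regularity of $(\rho,u)$. The identity is interpreted in the distributional sense in $t$, or integrated over $[0,t]$.
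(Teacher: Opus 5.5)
Your proposal is correct and follows essentially the paper's route. The paper gives no computation of its own and defers to the standard relative entropy calculation of \cite{BV05}, which is exactly what you carry out: keep $\frac{d}{dt}\int\eta(U^\e)\,\dx$, use $\frac{d}{dt}\int\eta(U)\,\dx=0$, and reduce the cross term to $\int\nabla u:(G(U^\e|U)+\calR^\e)\,\dx$ via \eqref{eq:f_cons}, \eqref{E} and integration by parts. Your bookkeeping checks out: the $\nabla h_\psi(\rho)\cdot(\rho^\e u^\e-\rho u)$ terms cancel directly, and the pressure cancellation is simply $-h_\psi'(\rho)\nabla\cdot(\rho u)+u\cdot\nabla h_\psi(\rho)=-P_\psi'(\rho)\nabla\cdot u$, so the $\rho^\e/\rho$ identity you invoke is not needed.
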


\begin{lemma}\label{olive}
Under (A5), we have the following pointwise estimate
\[
|P_\psi(\rho^\e|\rho)| \le C \Phi_\psi(\rho^\e|\rho)
\]
for any $\rho^\e,\rho \ge 0$.
\end{lemma}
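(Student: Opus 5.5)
Both relative quantities will be written as integral remainders and then compared pointwise on their integrands. Since $P_\psi'(s)=s h_\psi'(s)$ and $\Phi_\psi''(s)=h_\psi'(s)$, Taylor's formula with integral remainder gives, for $\rho^\e,\rho\ge0$,
\[
\Phi_\psi(\rho^\e|\rho)=\int_\rho^{\rho^\e}(\rho^\e-s)\,h_\psi'(s)\,\ds,\qquad
P_\psi(\rho^\e|\rho)=\int_\rho^{\rho^\e}(\rho^\e-s)\,P_\psi''(s)\,\ds .
\]
Here $P_\psi''(s)=h_\psi'(s)+s h_\psi''(s)$. The integrands $(\rho^\e-s)\,\cdot$ have a fixed sign along the segment from $\rho$ to $\rho^\e$, and $h_\psi'>0$, so $\Phi_\psi(\rho^\e|\rho)$ is a nonnegative integral of $|\rho^\e-s|\,h_\psi'(s)$.

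By (A5), $|P_\psi''(s)|\le h_\psi'(s)+s|h_\psi''(s)|\le (1+C)\,h_\psi'(s)$ for every $s>0$. Therefore
\[
|P_\psi(\rho^\e|\rho)|\le\Big|\int_\rho^{\rho^\e}|\rho^\e-s|\,|P_\psi''(s)|\,\ds\Big|\le (1+C)\,\Phi_\psi(\rho^\e|\rho),
\]
which is the claim with constant $1+C$.

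The remaining work is justifying the Taylor representation near $s=0$. We only know $h_\psi\in C^1((0,\infty))$, and (A5) implicitly asks for $h_\psi\in C^2$ on $(0,\infty)$. I would prove the identity for $\rho^\e,\rho>0$ first. For $\rho^\e=0$ or $\rho=0$, I would take the limit, using continuity of $P_\psi$ and $\Phi_\psi$ at $0$ (Corollary \ref{cor:sh(s)}) and $h_\psi(0)=0$; the relative quantities are continuous in both arguments, so the inequality passes to the limit. The main (mild) obstacle is this endpoint continuity: I need $s h_\psi(s)\to0$ as $s\to0$ so that $P_\psi'(\rho)(\rho^\e-\rho)$ behaves, and this follows from $h_\psi(0)=0$ and the continuity of $h_\psi$.
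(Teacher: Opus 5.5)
Your argument is the same as the paper's: write both relative quantities by Taylor's formula with integral remainder, use (A5) to get $|P_\psi''(s)|\le h_\psi'(s)+s|h_\psi''(s)|\le (1+C)h_\psi'(s)$, and conclude by the triangle inequality. One small correction concerns your endpoint remark, which the paper does not address: passing to the limit $\rho\to0$ in $P_\psi'(\rho)(\rho^\e-\rho)$ requires $P_\psi'(s)=s\,h_\psi'(s)\to0=P_\psi'(0)$, not $s\,h_\psi(s)\to0$; this follows from $P_\psi\in\calC^1([0,\infty))$ (Corollary \ref{cor:sh(s)}), or directly from (A5), which gives $h_\psi'(t)\ge 2^{-C}h_\psi'(s)$ for $t\in[s/2,s]$ and hence $s\,h_\psi'(s)\le 2^{C+1}\big(h_\psi(s)-h_\psi(s/2)\big)\to0$.
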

\begin{proof}
Notice that
\[
P_\psi''(\rho)= (\rho h_\psi'(\rho))' = h_\psi'(\rho) +\rho h_\psi''(\rho),\quad \Phi_\psi''(\rho)=h_\psi'(\rho).
\]
In particular, (A5) implies that
\bq\label{jamon}
\rho |P_\psi''(\rho)| \le C h_\psi'(\rho).
\eq
By Taylor's expansion (as \eqref{ring} in the sequel), we find that
\[
\Phi_\psi(\rho^\e|\rho)= \lt[\int_0^1 (1-s) h_\psi'((1-s)\rho + s\rho^\e)\,\ds\rt](\rho^\e -\rho)^2.
\]
Similarly,
\[
P_\psi(\rho^\e|\rho)= \lt[\int_0^1 (1-s) P_\psi''((1-s)\rho + s\rho^\e) \,\ds\rt](\rho^\e -\rho)^2.
\]
Since $h_\psi'(\rho) \ge 0$ and $|P_\psi''(\rho)| \le h_\psi'(\rho) + \rho|h_\psi''(\rho)| \le Ch_\psi'(\rho)$ by appealing to \eqref{jamon}, we obtain $|P_\psi(\rho^\e|\rho)| \le C \Phi_\psi(\rho^\e|\rho)$ by triangle's inequality.
\end{proof}

\begin{lemma}\label{lem:eps}
Under the hypothesis {(H0)--(H2)} in Theorem \ref{thm:hdr}, we have
\[
\int \eta(U^\e_t|U_t) \,\dx \le 
C \e^{\frac{1}{2}}, \quad \forall t \in [0,T^*]
\]
where $C>0$ is independent of $\e>0$.
\end{lemma}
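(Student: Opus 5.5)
We integrate the identity of Lemma \ref{lem:fml} on $[0,t]$ and control each term on the right-hand side separately, aiming for a Gronwall inequality with an $O(\e^{1/2})$ source.

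\emph{Step 1: entropy term.} By Corollary \ref{cor:basic} (minimization and compatibility), $\int \eta(U^\e_t)\,\dx \le \iint H(f^\e(t),v)\,\dx\dv$. More precisely,
\[
\iint H(f^\e(t))\,\dx\dv=\int\eta(U^\e_t)\,\dx+\int R\calF(f^\e(t))\,\dx .
\]
Combining this with the kinetic entropy inequality \eqref{eq:kep} and (H2), we obtain
\[
\int \eta(U^\e_t)\,\dx+\int R\calF(f^\e(t))\,\dx+\frac1\e\int_0^t\!\!\int I(f^\e)\,\dx\ds \le \int\eta(U^\e_0)\,\dx + C_A\e^{1/2}.
\]
The generalized Log-Sobolev inequality \eqref{eq:LSI} gives $R\calF(f^\e)\le \tfrac12 I(f^\e)$ pointwise in $x$. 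Hence
\[
\int_0^t\!\!\int R\calF(f^\e)\,\dx\ds\le C\e,
\]
where $C$ is controlled by (H0) and the bound $\iint H(f_0^\e)\le C_A+C_A$.

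\emph{Step 2: quadratic terms.} Lemma \ref{olive} gives $|P_\psi(\rho^\e|\rho)|\le C\Phi_\psi(\rho^\e|\rho)$. Since $\rho^\e|u^\e-u|^2\le 2\eta(U^\e|U)$, we have
\[
\Big|\int\nabla u:G(U^\e|U)\,\dx\Big|\le C\|\nabla u\|_{L^\infty}\int\eta(U^\e|U)\,\dx .
\]

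\emph{Step 3: pressure error $\calR^\e$ (main obstacle).} By the computation at the end of Section \ref{sec:M} and Proposition \ref{prop:diss},
\[
|\calR^\e|\le \int|v-u^\e|^2|f^\e-M_\psi[f^\e]|\,\dv\le C\big(\sqrt{\Phi_\psi(\rho^\e)}\sqrt{R\calF(f^\e)}+R\calF(f^\e)\big).
\]
Integrating in $x$ and $s$ and applying Cauchy--Schwarz yields
\[
\int_0^t\!\!\int|\nabla u||\calR^\e|\,\dx\ds\le C\|\nabla u\|_\infty\Big(\Big(\int_0^t\!\!\int\Phi_\psi(\rho^\e)\Big)^{1/2}\Big(\int_0^t\!\!\int R\calF\Big)^{1/2}+\int_0^t\!\!\int R\calF\Big).
\]
The first factor is bounded uniformly by $T^*\sup_s\int\eta(U^\e_s)\le C$, again from Step 1 and (H0). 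Together with Step 1 this gives a bound $C(\e^{1/2}+\e)\le C\e^{1/2}$ for $\e\le1$. The delicate point is that we need only the time-integrated dissipation $\frac1\e\int I$, and not a pointwise bound, which is exactly why the rate is $\e^{1/2}$ rather than $\e$.

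\emph{Step 4: conclusion.} Collecting Steps 1--3 in the integrated form of Lemma \ref{lem:fml}, and using that $\int\eta(U_t)\,\dx$ is conserved by the classical solution (so that the $\frac{d}{dt}\int\eta(U^\e)$ term contributes at most $\int\eta(U_0^\e)-\int\eta(U^\e_t)+C_A\e^{1/2}$, which the identity converts into initial relative entropy plus error), we arrive at
\[
\int\eta(U^\e_t|U_t)\,\dx\le\int\eta(U^\e_0|U_0)\,\dx+C\e^{1/2}+C\int_0^t\!\!\int\eta(U^\e_s|U_s)\,\dx\ds .
\]
Then (H1) and Gronwall give the claim, with a constant depending only on $C_A,N,T^*,\psi,\|\nabla u\|_\infty$ and $\|\eta(U_0)\|_{L^1}$.
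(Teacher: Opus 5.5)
Your proposal is correct and follows essentially the same route as the paper. It uses the same four-term decomposition from Lemma \ref{lem:fml}: (H1) for the initial relative entropy, minimization/compatibility with \eqref{eq:kep} and (H2) for the entropy term, Lemma \ref{olive} for $G$, and Proposition \ref{prop:diss} with Cauchy--Schwarz, the generalized Log-Sobolev inequality and the time-integrated dissipation for $\calR^\e$, closed by Gr\"onwall.

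One cosmetic slip in Step 4: the integrated entropy term contributes exactly $\int\eta(U^\e_t)\,\dx-\int\eta(U^\e_0)\,\dx$. Step 1 bounds this by $C_A\e^{1/2}$, whereas your displayed expression has the sign reversed. Also, no separate appeal to conservation of $\int\eta(U_t)\,\dx$ is needed, since that is already built into Lemma \ref{lem:fml}.
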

%depends on $\gamma,\theta, n, T^*$ and $ \|\nabla u \|_{L^\infty(0,T^*;L^\infty)}$

%By integrating the equation \eqref{FPVP}  against $\begin{pmatrix} 1 \\ v \end{pmatrix}$

\begin{proof}
By Lemma \ref{lem:fml}, we have
\[
\begin{split}
\intr \eta(U^\e_t|U_t) \,\dx &=  \intr \eta(U^\e_0|U_0) \,\dx +\lt(\intr \eta(U^\e_t) \,\dx - \intr \eta(U^\e_0) \,\dx \rt) \cr
&\quad - \int_0^t \int \nabla u : G(U^\e_s|U_s) \,\dx\ds
- \int_0^t\int \nabla u : \calR^\e \dx \ds \cr
&=: \sum_{i=1}^4 J_i
\end{split}
\]
for any $t\in[0,T^*]$. First, it is clear from {(H1)} that 
\[
J_1 \le C_A\e^{1/2}.
\]
To estimate $J_2$, we proceed as
\[%\label{eq:etea:bdd}
\begin{split}
\int\eta(U^\e_t)\,\dx &= \iint H(M_\psi(f^\e)(t),v) \dx\dv \quad (\because \eqref{eq:comp}) \\
&\le  \iint H(f^\e(t),v) \dx\dv \quad (\because \eqref{eq:minpr}) \cr
&\le  \iint H(f^\e_0,v) \dx\dv  \quad(\because \eqref{eq:kep})\\
&\le \int \eta(U^\e_0)\,\dx + C_A\e^{1/2} \quad(\because (H2)),
\end{split}
\]
hence,
\[
J_2 \le C_A\e^{1/2}.
\]
Since $|G(U^\e|U)|_1 \le C \eta(U^\e|U)$ where
$
|A|_1:=\sum_{i,j}|a_{ij}|
$
for $A \in M(\R^{N\times N})$ by recalling \eqref{eq:G} and Lemma \ref{olive}, we obtain
\[
J_3 \le C\|\nabla u\|_{L^\infty([0,T^*]\times \R^N)} \int_0^t \int \eta(U^\e_s|U_s)\,\dx\ds.
\]
Lastly, for the estimate of $J_4$, we apply Proposition \ref{prop:diss} to find that
\[
\begin{split}
J_4 \le &C\|\nabla u\|_{L^\infty([0,T^*]\times \R^N)} \int_0^t \intr \Big(\sqrt{\Phi_\psi(\rho_{f^\e})}\sqrt{R\calF(f^\e)} + R\calF(f^\e)\Big)\,\dx\ds\\
\le &C\|\nabla u\|_{L^\infty([0,T^*]\times \R^N)} \int_0^t \lt(\intr {\Phi_\psi(\rho_{f^\e})}\,\dx\rt)^{\frac{1}{2}}\lt(\int_{\R^N}{R\calF(f^\e)}\dx\rt)^{\frac{1}{2}} + \lt(\int_{\R^N} R\calF(f^\e)\,\dx\rt)\ds\\
\le &C\|\nabla u\|_{L^\infty([0,T^*]\times \R^N)} \left[\sqrt{t}\sup_{s\in[0,t]}\lt(\intr {\Phi_\psi(\rho_{f^\e})}\,\dx\rt)^{\frac{1}{2}} \lt(\int_0^t \int_{\R^N}{R\calF(f^\e)}\dx\ds\rt)^{\frac{1}{2}} \right.\\
&\qquad\qquad\qquad\qquad\qquad \qquad \left.+ \lt(\int_0^t\int_{\R^N} R\calF(f^\e)\,\dx\ds\rt)\right]
\end{split}
\]
Notice that $\intr \Phi_\psi(\rho_{f^\e})\,\dx$ is uniformly bounded as
\bq\label{eq:Pub}
\begin{aligned}
\intr \Phi_\psi(\rho_{f^\e})\,\dx &\le \iint H(f^\e,v)\,\dx\dv\quad  (\because \eqref{eq:minpr}, \eqref{eq:comp})\\
&\le \iint H(f^\e_0,v) \,\dx\dv \quad (\because \eqref{eq:kep}) \\
&\le \intr \eta(U^\e_0) \dx + C_A\e^{1/2} \quad (\because (H2))\\
&\le C \quad(\because (H0)).
\end{aligned}
\eq
On the other hand, by the generalized Log-Sobolev inequality \eqref{eq:LSI} and the kinetic entropy inequality \eqref{eq:kep}, we obtain
\[
\int_0^t\intr R\calF(f^\e)\,\dx\ds  \le \int_0^t \intr I(f^\e)\,\dx\ds \le \e \iint H(f^\e_0,v)\,\dx\dv \le C\e.
\]
This proves $J_4 \le C(\e^{\frac{1}{2}}+\e)$. Combining all, we obtain
\[
\int \eta(U^\e_t|U_t) \dx \le C\e^{\frac{1}{2}} + C \int_0^t \int \eta(U^\e_s|U_s) \dx \ds, \quad \forall t\in[0,T^*]
\]
for all small $\e>0$. By Gr\"onwall's inequality, the conclusion of the lemma follows.
\end{proof}

\subsection{Convergence of macroscopic quantities: general case}\label{ssc:strcv}
Based on the estimates on the relative entropy \eqref{mnre}, we aim deduce the convergence of macroscopic variables associated to $f^\e$. 

Fix $\delta>0$ small enough (such that $3\delta<1/{\delta}$) and a bounded open set $\Omega \subset \R^N$. Since $h_\psi$ is strictly increasing $\calC^1$ function on $(0,\infty)$ thanks to Lemma \ref{lem:h}, we have $\inf_{s \in [\delta,1/{\delta}]} h_\psi'(s) =: c_\delta>0$. This yields, by means of mean value theorem,
\[
\Phi_\psi(\rho^\e|\rho)1_{\{\delta \le \rho^\e, \rho \le 1/{\delta}\}} \ge \frac{1}{2}c_{\delta}(\rho^\e -\rho)^21_{\{\delta \le \rho^\e, \rho \le 1/{\delta}\}}.
\]
This provides
\[
\||\rho^\e -\rho|1_{\{\delta \le \rho^\e, \rho \le 1/{\delta}\}}\|_{L^1(\Omega)}\le \lt(\frac{2|\Omega|}{c_\delta}\rt)^{\frac{1}{2}}\lt(\int \Phi_\psi(\rho^\e|\rho)\,\dx\rt)^{\frac{1}{2}}\le \lt(\frac{2|\Omega|C_*}{c_\delta}\rt)^{\frac{1}{2}}\e^{\frac{1}{4}}
\]
by H\"older's inequality and \eqref{mnre}. On the other hand, we recall \eqref{eq:Pub} that $\Phi_\psi(\rho^\e)$ are uniformly bounded in $L^\infty([0,T^*];L^1(\R^N))$, and 
\[
\Phi_\psi(s) = \int_0^sh_\psi(u)\,\du \ge \frac{s}{2}h_\psi\lt(\frac{s}{2}\rt)
\]
so that $\frac{\Phi_\psi(s)}{s} \to \infty$ as $s \to \infty$. This implies that $\rho^\e$ are uniformly integrable;  there exists  $\mu(\delta)\to 0$ as $\delta \to 0$
such that \[
\int_{\omega} \rho^\e +\rho\,\dx \le \mu(\delta)
\]
whenever a measurable set $\omega\subset \Omega$ satisfies $|\omega| = O(\delta)$. On the other hand, we also have
\[
\lt|\{\rho^\e \ge 1/{\delta}\} \rt| \le \delta \int \rho^\e\,\dx \le C\delta.
\]
Analogously, we also have $\lt|\{\rho \ge 1/{\delta}\} \rt| \le C\delta.$
Notice that these estimates hold uniformly in $\e  \in (0,\frac{1}{2}]$, $t \in [0,T^*]$. We then decompose $\Omega$ into four subsets:
\[
\begin{aligned}
&{I}:=\{x\in\Omega: \rho^\e,\rho \le \delta\},\quad II:=\{x\in\Omega: \rho^\e \le \delta < \rho \le {1}/{\delta} \} \cup \{x\in\Omega: \rho \le \delta <\rho^\e \le 1/{\delta} \}\\
&III:= \{x\in\Omega: \delta< \rho^\e,\rho \le 1/{\delta} \},\quad IV:=\{x\in\Omega: \rho^\e > {1}/{\delta} \} \cup \{x\in\Omega: \rho >1/{\delta} \}.
\end{aligned}
\]
It is obvious that
\[
\|\rho^\e-\rho\|_{L^1(I)}\le \delta|I|,\quad \|\rho^\e-\rho\|_{L^1(III)}\le \lt(\frac{2|\Omega|C_*}{c_\delta}\rt)^{\frac{1}{2}}\e^{\frac{1}{4}},\quad \|\rho^\e-\rho\|_{L^1(IV)}\le 2\mu(\delta).
\]
To bound the integral of $\rho^\e -\rho$ on $II$, we further decompose this set into two parts:
\[
\begin{aligned}
&II_{\rm good}:=\{x\in\Omega: \rho^\e \le \delta \le  \rho\le 5\delta \} \cup \{x\in\Omega: \rho \le \delta <\rho^\e \le 5\delta \},\\
&II_{\rm bad}:=II_{\rm bad}^{(1)}\cup II_{\rm bad}^{(2)}=\{x\in\Omega: \rho^\e \le \delta \le   5\delta < \rho \le 1/{\delta} \} \cup \{x\in\Omega: \rho \le \delta  \le 5\delta < \rho^\e \le 1/{\delta} \}.
\end{aligned}
\]
The estimate on $II_{\rm good}$ is immediate:
\[
\|\rho^\e -\rho\|_{L^1(II_{\rm good})}\le 5\delta |II_{\rm good}|.
\]
We then estimate the integral on $II_{\rm bad}^{(2)}$. Let us define $l_x:[0,1]\to \R_+$ by setting
\[
l_x(s):=\rho(x) + s(\rho^\e(x)-\rho(x))
\]
whenever $x \in II_{\rm bad}^{(2)}$. We set $g(s):=(\Phi_\psi \circ l_x)(s)=\Phi_\psi(l_x(s))$ for $s\in[0,1]$. Notice that
\bq\label{ring}
g(1)-g(0)-g'(0)=\int_0^1(1-s)g''(s)\,\ds
\eq
with $g'(s)=h_\psi(l_x(s))(\rho^\e(x)-\rho(x))$, $g''(s)=h_\psi'(l_x(s))(\rho^\e(x)-\rho(x))^2$. Since
\[
1/{\delta}\ge \rho^\e(x) \ge l_x(s) \ge \frac{1}{4}(\rho^\e(x)-\rho(x))\ge {\delta}\quad \text{for all}\,\,\,s\in \lt[\frac{1}{4},\frac{1}{2}\rt],
\]
we obtain a pointwise estimate: for all $x\in II_{\rm bad}^{(2)}$,
\[
\begin{aligned}
\Phi_\psi(\rho^\e(x)|\rho(x)) = \lt[\int_0^1 (1-s)h_\psi'(l_x(s))\,\ds \rt](\rho^\e(x)-\rho(x))^2 &\ge \frac{1}{2}\lt[\int_{\frac{1}{4}}^{\frac{1}{2}} h_\psi'(l_x(s))\,\ds \rt](\rho^\e(x)-\rho(x))^2 \\
&\ge \frac{c_\delta}{8}(\rho^\e(x)-\rho(x))^2.
\end{aligned}
\]
The $II_{\rm bad}^{(1)}$ case can be estimated similarly. Combining all, we find that
\[
\|\rho^\e -\rho\|_{L^\infty([0,T^*];L^1(\Omega))}\le 5\delta |\Omega| + \lt(\frac{8|\Omega|C_*}{c_\delta}\rt)^{\frac{1}{2}}\e^{\frac{1}{4}} + 2\mu(\delta).
\]
By taking limsup $\e \to 0$ first and choosing $\delta>0$ arbitrarily small, we deduce that
\bq\label{crho}
\|\rho^\e - \rho\|_{L^\infty([0,T^*];L^1_{\rm loc}(\R^N))} \to 0
\eq
as $\e \to 0$.

The strong convergence of $\rho^\e u^\e$ and $\rho^\e u^\e \otimes u^\e$ follows from the following standard estimates:
 \cite[Lemma 2.2]{CCJ21} 
\[
\begin{aligned}
&\|\rho^\e u^\e - \rho u \|_{L^1} \le \| \rho^\e \|_{L^1}^{\frac{1}{2}} \lt(\int \rho^\e |u^\e-u|^2\,\dx\rt)^{\frac{1}{2}} + \|u\|_{L^\infty}\|\rho^\e -\rho\|_{L^1},\\
&\|\rho^\e u^\e \otimes u^\e - \rho u \otimes u\|_{L^1}  \\
&\qquad\; \le \int \rho^\e|u^\e-u|^2 \,\dx +2\| \rho^\e\|_{L^1}^\frac{1}{2}\lt(\int \rho^\e|u^\e-u|^2 \,\dx \rt)^\frac{1}{2}\|u\|_{L^\infty} + \|\rho^\e-\rho\|_{L^1}\|u\|_{L^\infty}^2.
\end{aligned}
\]
Lastly, we proceed to establish the convergence of the pressure tensor as
\bq\label{bridge}
\int (v-u^\e)\otimes (v-u^\e)f^\e\,\dv - P_\psi(\rho)\bbI_N = \calR^\e +(P_\psi(\rho^\e)-P_\psi(\rho))\bbI_N,
\eq
where $\calR^\e$ is defined as in \eqref{eq:pe}. We have 
\bq\label{arch}
\|\calR^\e\|_{L^1([0,T^*];L^1(\R^N))} \le C_*\e^{\frac{1}{2}}
\eq
following the $J_4$ estimate in the proof of Lemma \ref{lem:eps}. We then notice that
\[
P_\psi(\rho^\e)-P_\psi(\rho) = P_\psi(\rho^\e|\rho) + h_\psi(\rho)(\rho^\e -\rho).
\]
Since $|P_\psi(\rho^\e|\rho)| \le C \Phi_\psi(\rho^\e|\rho) \le C \eta(U^\e_t|U_t)$ and $h_\psi(\rho) \in L^\infty([0,T^*]\times \R^N)$ with \eqref{crho}, we have
\[
\|P_\psi(\rho^\e) - P_\psi(\rho)\|_{L^\infty([0,T^*];L^1_{\rm loc}(\R^N))} \to 0.
\]
Combining these two, we obtain
\[
\lt\|\int (v-u^\e)\otimes (v-u^\e)f^\e\,\dv - P_\psi(\rho)\bbI_N \rt\|_{L^1_{\rm loc}([0,T^*]\times \R^N)} \to 0.
\]

This establishes Theorem \ref{thm:hdr}; the estimate of relative entropy  \eqref{mnre} is established in Lemma \ref{lem:eps}, the strong convergece of macroscopic quantities are addressed in Section \ref{ssc:strcv}.

\subsection{Convergence of macroscopic quantities: power law case}
In this section, we prove Corollary \ref{quant} to obtain a quantitative convergence rate of the case of $P_{\psi_m}(\rho)=\theta \rho^{\gamma(m)}$. We recall some useful inequalities. For $p>1$, we denote
\[
\varphi_p(f|g):= f^p - g^p - pg^{p-1}(f-g).
\]
for $f,g \ge 0$. Let us denote $\Omega$ be any subset of $\R^N$. We recall the following classical results.

\begin{lemma}\label{lem:ple2}
Let $f,g \in L^p_+(\Omega)$ be given. If $p \in (1,2]$, then we have
\[
\|f-g\|_{L^p(\Omega)} \le C_{p} \lt(\int_{\Omega} f^p + g^p \,\dx\rt)^{\frac{2-p}{2p}} \lt(\int_{\Omega} \varphi_p(f|g)\,\dx\rt)^{\frac{1}{2}}
\]
where $C_p$ is independent of $\Omega$.
\end{lemma}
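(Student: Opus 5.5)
We reduce to a pointwise inequality for nonnegative reals and then integrate with H\"older. For $p\in(1,2]$ and $a,b\ge 0$ we aim to show
\[
|a-b|^2 \le C_p\,(a+b)^{2-p}\,\varphi_p(a|b).
\]
Once this holds, we write $|f-g|^p = \big(|f-g|^2 (f+g)^{p-2}\big)^{p/2}(f+g)^{p(2-p)/2}$. H\"older's inequality with exponents $\frac{2}{p}$ and $\frac{2}{2-p}$ then gives
\[
\int_\Omega |f-g|^p\,\dx \le \Big(\int_\Omega |f-g|^2(f+g)^{p-2}\,\dx\Big)^{\frac p2}\Big(\int_\Omega (f+g)^p\,\dx\Big)^{\frac{2-p}{2}}.
\]
The first factor is bounded by $C_p\big(\int_\Omega \varphi_p(f|g)\,\dx\big)^{p/2}$. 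For the second factor we use $(f+g)^p\le 2^{p-1}(f^p+g^p)$. Taking $p$-th roots yields the claim, and the constant is clearly independent of $\Omega$. The case $p=2$ is trivial, since $\varphi_2(f|g)=(f-g)^2$. On the set where $f+g=0$ the integrand vanishes, so the convention $0\cdot\infty=0$ causes no trouble.

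For the pointwise inequality we use Taylor's formula with integral remainder, exactly as in \eqref{ring}:
\[
\varphi_p(a|b) = p(p-1)(a-b)^2\int_0^1 (1-s)\,\big(b+s(a-b)\big)^{p-2}\,\ds.
\]
Since $p-2\le 0$ and $0\le b+s(a-b)\le \max\{a,b\}\le a+b$, each factor satisfies $(b+s(a-b))^{p-2}\ge (a+b)^{p-2}$. Therefore
\[
\varphi_p(a|b)\ge \tfrac{p(p-1)}{2}(a+b)^{p-2}(a-b)^2,
\]
which is the desired bound with $C_p=\frac{2}{p(p-1)}$. When $a+b>0$ the integrand may blow up at an endpoint where $b+s(a-b)=0$. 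This singularity is integrable and only helps the lower bound, so the argument goes through.

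The only delicate points are this endpoint singularity and keeping track of the exponents in H\"older's inequality. Neither is a genuine obstacle: the heart of the proof is the monotonicity of $t\mapsto t^{p-2}$ for $p\le 2$.
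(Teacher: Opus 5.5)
Your proposal is correct and follows essentially the paper's proof: a Taylor-remainder lower bound $\varphi_p(a|b)\ge \tfrac{p(p-1)}{2}w^{p-2}(a-b)^2$, then H\"older with exponents $\frac{2}{p}$ and $\frac{2}{2-p}$. The only difference is that you take the weight $w=a+b$ where the paper takes $w=\max\{a,b\}$ (written there as $\min\{1/f,1/g\}^{-1}$), which costs only the harmless factor $2^{p-1}$ from $(f+g)^p\le 2^{p-1}(f^p+g^p)$.
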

\begin{proof}
Note that by Taylor's theorem, we have
\[
f^p - g^p - pg^{p-1}(f-g) \ge \frac{p(p-1)}{2}\min\lt\{\frac{1}{f},\frac{1}{g}\rt\}^{2-p}(f-g)^2.
\]
On the other hand, we observe that
\[
|f-g|^{p} = \max\{f,g\}^{\frac{p(2-p)}{2}} \min\lt\{\frac{1}{f}, \frac{1}{g}\rt\}^{\frac{p(2-p)}{2}}|f - g|^{p}.
\]
Applying H\"older's inequality with exponent $\lt(\frac{2}{p},\frac{2}{2-p}\rt)$, we obtain
\[
\begin{split}
\int_{\Omega} |f-g|^{p} \dx &\le \lt(\int_{\Omega} \max\{f,g\}^{p} \dx\rt)^{\frac{2-p}{2}}\lt(\int_{\Omega} \min\lt\{\frac{1}{f}, \frac{1}{g}\rt\}^{2-p} |\rho^\e - \rho|^2 \dx\rt)^{\frac{p}{2}} \\
&\le \lt(\frac{2}{p(p-1)}\rt)^{\frac{p}{2}} \lt(\int_{\Omega} f^p + g^p\, \dx\rt)^{\frac{2-p}{2}} \lt(\int_{\Omega} f^p - g^p - pg^{p-1}(f-g)\, \dx\rt)^{\frac{p}{2}}.
\end{split}
\]
\end{proof}

\begin{lemma}\label{lem:pge2}
For $p\ge 2$, then the following pointwise estimate holds:
\[
|b-a|^p \le c(p)\lt(|b|^p - |a|^p - p|a|^{p-2} a \cdot (b-a)\rt)\quad \forall a,b \in \R^d,
\]
for some $c(p)>0$. As a consequence, for any $f,g \in L^p_+(\Omega)$, we have
\[
\|f -g\|_{L^p(\Omega)} \le c(p)^{\frac{1}{p}} \lt(\int_{\Omega} \varphi_p(f|g)\,\dx\rt)^{\frac{1}{p}}. 
\]
\end{lemma}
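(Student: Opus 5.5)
The plan is to prove the pointwise inequality first and then obtain the $L^p$ bound by integration. Both sides of the pointwise inequality are invariant under the scaling $(a,b)\mapsto(\lambda a,\lambda b)$: each side is homogeneous of degree $p$. So I reduce to a compact normalization. The case $a=0$ is trivial with $c(p)=1$. For $a\neq0$, I set $a=|a|e$ with $|e|=1$ and $b=|a|(e+w)$, and reduce to showing
\[
|w|^p\le c(p)\big(|e+w|^p-1-p\,e\cdot w\big)\qquad\forall w\in\R^d.
\]
The right-hand side is the Bregman divergence $\Phi(e+w)-\Phi(e)-\nabla\Phi(e)\cdot w$ of the convex function $\Phi(x)=|x|^p$. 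It is nonnegative, and it vanishes only at $w=0$ because $\Phi$ is strictly convex for $p>1$.

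I would split into two regimes for $|w|$.

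\emph{Large $|w|$, say $|w|\ge R$.} The right-hand side is $|w|^p(1+o(1))$, since the terms $1$ and $p\,e\cdot w$ are of lower order. So the ratio is bounded once $R$ is large enough, depending only on $p$.

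\emph{Small and moderate $|w|$.} Here I use the Taylor formula with integral remainder,
\[
\int_0^1(1-s)\,w^T\nabla^2\Phi(e+sw)\,w\,ds,
\]
together with
\[
w^T\nabla^2\Phi(x)\,w\ge p|x|^{p-2}|w|^2,
\]
which holds for $p\ge2$ because $\nabla^2\Phi(x)=p|x|^{p-2}\big(I+(p-2)\hat x\otimes\hat x\big)$. The remainder is then bounded below by
\[
p|w|^2\int_0^1(1-s)|e+sw|^{p-2}\,ds.
\]
For $|w|\le\frac12$ this integral is bounded below by a positive constant, which gives a lower bound $\gtrsim|w|^2\ge|w|^p$. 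For $\frac12\le|w|\le R$ I would argue by compactness: the ratio $|w|^p/(\text{RHS})$ is continuous on this annulus with nonvanishing denominator, so it is bounded.

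Alternatively, one can avoid compactness. Points $s\in[0,1]$ with $|e+sw|\ge c|w|$ occupy a set of $s$ of measure bounded below (by the triangle inequality, $|e+sw|\ge s|w|-1$ and similar estimates), and this directly yields $\gtrsim|w|^p$ when $p\ge2$. I would write the proof with the explicit Hessian bound plus compactness, since that is cleaner.

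For the consequence, I apply the pointwise inequality with $d=1$, $a=g(x)\ge0$, $b=f(x)\ge0$. Then $|b|^p-|a|^p-p|a|^{p-2}a(b-a)=\varphi_p(f|g)$. Integrating over $\Omega$ and taking $p$-th roots gives the $L^p$ estimate.

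The main obstacle is the intermediate regime, where neither the quadratic Taylor bound nor the asymptotic bound is immediate. Compactness handles it, using only strict convexity, that is, positivity of the Bregman divergence away from $w=0$.
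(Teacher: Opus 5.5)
Your proof is correct. The paper does not prove this lemma at all: it is quoted as a classical inequality, with no proof environment after it. So there is no argument in the paper to compare against, and yours stands on its own.

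One point you should state explicitly is that the constant from the compactness step is uniform in the unit vector $e$. This holds because $|e+w|^p-1-p\,e\cdot w$ depends only on $|w|$ and $e\cdot w$, since $|e+w|^2=1+2e\cdot w+|w|^2$. Alternatively, run compactness on $\mathbb{S}^{d-1}\times\{\frac12\le|w|\le R\}$. The same observation shows $c(p)$ does not depend on $d$.

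Your three regimes are all valid, but the alternative you sketch is a complete one-step proof once phrased correctly. The affine path $s\mapsto e+sw$ has speed $|w|$, so the set of $s\in[0,1]$ with $|e+sw|<|w|/4$ has measure at most $\frac12$. Since $p-2\ge0$ and $|x|^p\in C^2$ for $p\ge2$, this gives
\[
\int_0^1(1-s)\,|e+sw|^{p-2}\,ds\;\ge\;\Big(\frac{|w|}{4}\Big)^{p-2}\int_{1/2}^1(1-s)\,ds=\frac18\Big(\frac{|w|}{4}\Big)^{p-2}.
\]
Hence $|e+w|^p-1-p\,e\cdot w\ge \frac{p}{8}\,4^{2-p}|w|^p$ for every $w$, with an explicit constant and no case split.

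The standard textbook route is similar in spirit. Start from the monotonicity inequality $\langle |x|^{p-2}x-|y|^{p-2}y,\,x-y\rangle\ge 2^{2-p}|x-y|^p$ for $p\ge2$. Integrate it along $x_t=a+t(b-a)$, using $b-a=(x_t-a)/t$. This gives the pointwise estimate directly with $c(p)=2^{p-2}$.

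Your compactness version uses only strict convexity of $|x|^p$, the Hessian lower bound near $w=0$, and the asymptotics at infinity. It is the most conceptual of the three but gives no explicit $c(p)$. That is harmless here, since the paper only needs the existence of $c(p)$ for the convergence rates in the power-law corollary. Your deduction of the $L^p$ bound, taking $d=1$, $a=g\ge0$, $b=f\ge0$ and integrating, is exactly right.
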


By applying Lemma \ref{lem:ple2},  \ref{lem:pge2} with $p=\gamma$ and Lemma \ref{lem:eps}, we can easily obtain that
\bq\label{milan}
\|\rho^\e - \rho\|_{L^\infty([0,T^*];L^\gamma(\R^N))} \lesssim \e^{\frac{1}{2\max\{2,\gamma\}}}.
\eq

\begin{lemma}%\label{lem:rhou}
It holds that
\[
\|\rho^\e u^\e - \rho u\|_{L^\frac{2\gamma}{\gamma+1}} \le \|\rho^\e\|_{L^\gamma}\lt(\int \rho^\e|u^
\e -u|^2\,\dx\rt)^{\frac{1}{2}} + \|\rho^\e - \rho\|_{L^\gamma}\|u\|_{L^{\frac{2\gamma}{\gamma-1}}}.
\]
\end{lemma}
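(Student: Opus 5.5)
The plan is to split the momentum error algebraically and then apply Hölder's inequality to each piece. Write
\[
\rho^\e u^\e - \rho u = \rho^\e (u^\e - u) + (\rho^\e - \rho) u .
\]
By the triangle inequality in $L^{\frac{2\gamma}{\gamma+1}}$, it suffices to bound the two terms separately. Their sum then gives exactly the right-hand side of the statement.

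For the first term, factor $\rho^\e(u^\e-u) = \sqrt{\rho^\e}\cdot\sqrt{\rho^\e}(u^\e-u)$. Apply Hölder with exponents $2\gamma$ and $2$, using $\frac{1}{2\gamma}+\frac12 = \frac{\gamma+1}{2\gamma}$:
\[
\|\rho^\e (u^\e-u)\|_{L^{\frac{2\gamma}{\gamma+1}}} \le \|\sqrt{\rho^\e}\|_{L^{2\gamma}} \,\|\sqrt{\rho^\e}(u^\e-u)\|_{L^2} = \|\rho^\e\|_{L^\gamma}^{\frac12}\Big(\int \rho^\e|u^\e-u|^2\,\dx\Big)^{\frac12}.
\]
This yields $\|\rho^\e\|_{L^\gamma}^{1/2}$ rather than $\|\rho^\e\|_{L^\gamma}$. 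For the later use in Corollary \ref{quant} this makes no difference, since $\|\rho^\e\|_{L^\gamma}$ is uniformly bounded by \eqref{eq:Pub}; there $\Phi_{\psi_m}(\rho)\sim\rho^\gamma$. The literal form in the statement, with $\|\rho^\e\|_{L^\gamma}$ to the first power, is not homogeneous under rescaling $\rho^\e\mapsto\lambda\rho^\e$, so it cannot hold in general. I will prove the square-root version, which is the scale-correct one. If the first-power form is needed, it holds up to a constant under the normalization $\|\rho^\e\|_{L^\gamma}\gtrsim 1$, or one can absorb $\|\rho^\e\|_{L^\gamma}^{1/2}\le \max\{1,\|\rho^\e\|_{L^\gamma}\}$.

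For the second term, apply Hölder with exponents $\gamma$ and $\frac{2\gamma}{\gamma-1}$, using $\frac1\gamma+\frac{\gamma-1}{2\gamma}=\frac{\gamma+1}{2\gamma}$:
\[
\|(\rho^\e-\rho)u\|_{L^{\frac{2\gamma}{\gamma+1}}}\le \|\rho^\e-\rho\|_{L^\gamma}\|u\|_{L^{\frac{2\gamma}{\gamma-1}}}.
\]

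The only delicate point is the exponent bookkeeping and the homogeneity discrepancy in the first term noted above. Everything else is two applications of Hölder's inequality. Combined with Lemma \ref{lem:eps} and \eqref{milan}, this gives the momentum rate $\e^{\beta}$ in Corollary \ref{quant}.
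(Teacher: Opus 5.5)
Your proof is correct and is the same argument as the paper's: split $\rho^\e u^\e-\rho u=(\rho^\e)^{1/2}(\rho^\e)^{1/2}(u^\e-u)+(\rho^\e-\rho)u$ and apply H\"older with exponents $(2\gamma,2)$ and $(\gamma,\frac{2\gamma}{\gamma-1})$. You are also right that this argument, the paper's included, only yields $\|\rho^\e\|_{L^\gamma}^{1/2}$; the first-power version fails, for instance, for $\rho^\e=\rho$ equal to a small constant on a ball with $u^\e-u$ constant there, and the typo is harmless for Corollary \ref{quant} because $\|\rho^\e\|_{L^\gamma}$ is uniformly bounded.
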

\begin{proof}
The proof is based on following observation and H\"older's inequality.
\[
\rho^\e u^\e - \rho u = (\rho^\e)^{\frac{1}{2}}(\rho^\e)^{\frac{1}{2}}(u^\e -u) + (\rho^\e-\rho)u
\]
\end{proof}
This Lemma, combined with previous estimate for $\|\rho^\e -\rho\|_{L^\gamma}$ in \eqref{milan}, yields
\bq\label{bologna}
\|\rho^\e u^\e -\rho u\|_{L^\infty([0,T^*];L^\frac{2\gamma}{\gamma+1}(\R^N))}\lesssim \e^{\frac{1}{2\max\{2,\gamma\}}}.
\eq

\begin{lemma}%\label{lem:rhouu}
We have
\[
\|\rho^\e u^\e \otimes u^\e - \rho u \otimes u\|_{L^1} \le \int \rho^\e|u^\e-u|^2\dx +
2\|\rho^\e u^\e - \rho u\|_{L^{\frac{2\gamma}{\gamma+1}}}\|u\|_{L^{\frac{2\gamma}{\gamma-1}}}
+ \|\rho^\e-\rho\|_{L^\gamma}\|u\|_{L^{\frac{2\gamma}{\gamma-1}}}^2.
\]
\end{lemma}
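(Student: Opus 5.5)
We expand the difference of the two tensor products algebraically and estimate each piece with Hölder's inequality, following the same pattern as the preceding lemma for the momentum. Write
\[
\rho^\e u^\e\otimes u^\e-\rho u\otimes u=\rho^\e(u^\e-u)\otimes(u^\e-u)+(\rho^\e u^\e-\rho u)\otimes u+u\otimes(\rho^\e u^\e-\rho u)-(\rho^\e-\rho)\,u\otimes u .
\]
To check this identity, expand the right-hand side. The first term gives $\rho^\e u^\e\otimes u^\e-\rho^\e u^\e\otimes u-\rho^\e u\otimes u^\e+\rho^\e u\otimes u$. The two middle terms give $\rho^\e u^\e\otimes u+\rho^\e u\otimes u^\e-2\rho u\otimes u$. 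The last term gives $-\rho^\e u\otimes u+\rho u\otimes u$. Summing, the cross terms cancel and we are left with $\rho^\e u^\e\otimes u^\e-\rho u\otimes u$, as required.

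We then take the $L^1$ norm of each term, using the entrywise matrix norm, so that $|a\otimes b|\le|a||b|$.

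The first term is bounded directly by $\int\rho^\e|u^\e-u|^2\,\dx$.

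For each of the two middle terms, apply Hölder's inequality with the conjugate exponents $\frac{2\gamma}{\gamma+1}$ and $\frac{2\gamma}{\gamma-1}$. These are conjugate because $\frac{\gamma+1}{2\gamma}+\frac{\gamma-1}{2\gamma}=1$. Together the two middle terms contribute $2\|\rho^\e u^\e-\rho u\|_{L^{\frac{2\gamma}{\gamma+1}}}\|u\|_{L^{\frac{2\gamma}{\gamma-1}}}$.

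For the last term, note that $|u\otimes u|\le|u|^2$. Apply Hölder's inequality with exponents $\gamma$ and $\frac{\gamma}{\gamma-1}$, and observe that $\||u|^2\|_{L^{\frac{\gamma}{\gamma-1}}}=\|u\|_{L^{\frac{2\gamma}{\gamma-1}}}^2$. This yields the final term $\|\rho^\e-\rho\|_{L^\gamma}\|u\|_{L^{\frac{2\gamma}{\gamma-1}}}^2$.

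There is no genuine obstacle in this argument. The only points needing care are getting the algebraic decomposition right, in particular the sign of the $(\rho^\e-\rho)u\otimes u$ term, and checking that the Hölder exponents are conjugate.
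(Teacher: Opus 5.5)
Your proposal is correct and follows the paper's proof exactly: the same four-term decomposition $\rho^\e(u^\e-u)\otimes(u^\e-u)+(\rho^\e u^\e-\rho u)\otimes u+u\otimes(\rho^\e u^\e-\rho u)-(\rho^\e-\rho)u\otimes u$, followed by H\"older's inequality with the exponent pairs $(\frac{2\gamma}{\gamma+1},\frac{2\gamma}{\gamma-1})$ and $(\gamma,\frac{\gamma}{\gamma-1})$. One point to pin down is the matrix norm. The constant-free bound $|a\otimes b|\le|a||b|$ holds for the Frobenius (or operator) norm, but for the entrywise $\ell^1$ norm $|\cdot|_1$ used elsewhere in the paper a dimensional factor would appear.
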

\begin{proof}
The proof is based on the following observation,
\[
\rho^\e u^\e \otimes u^\e - \rho u \otimes u = \rho^\e(u^\e-u) \otimes (u^\e - u) +  (\rho^\e u^\e - \rho u)\otimes u+  u\otimes (\rho^\e u^\e - \rho u)- (\rho^\e-\rho)u\otimes u.
\]
\end{proof}
Similarly, applying this lemma with \eqref{bologna}, we obtain
\bq\label{gaeta}
\|\rho^\e u^\e \otimes u^\e - \rho u \otimes u\|_{L^\infty([0,T^*];L^1(\R^N))} \lesssim \e^{\frac{1}{2\max\{2,\gamma\}}}
\eq

To obtain the error estimate for pressure tensor, we notice the following:
\begin{lemma}%\label{lem:Z}
For any $f,g \in L^\gamma_+(\R^N)$ with $\gamma>1$, we have
\[
\intr |f^\gamma - g^\gamma|\,\dx \le \gamma\|f-g\|_{L^\gamma}\lt(\intr f^\gamma + g^\gamma \,\dx\rt)^{\frac{\gamma-1}{\gamma}}.
\]
\end{lemma}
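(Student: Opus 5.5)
The plan is to reduce the inequality to a pointwise bound and then close with Hölder's inequality. For $a,b\ge 0$ and $\gamma>1$, the mean value theorem applied to $s\mapsto s^\gamma$ gives some $\xi$ between $a$ and $b$ with
\[
|a^\gamma-b^\gamma| = \gamma\,\xi^{\gamma-1}|a-b| \le \gamma\max\{a,b\}^{\gamma-1}|a-b|.
\]
Since $\gamma-1>0$, we also have $\max\{a,b\}^{\gamma-1}\le (a^\gamma+b^\gamma)^{\frac{\gamma-1}{\gamma}}$. Applying this with $a=f(x)$ and $b=g(x)$ yields the pointwise estimate
\[
|f^\gamma-g^\gamma|(x)\le \gamma\,|f-g|(x)\,\big(f^\gamma+g^\gamma\big)^{\frac{\gamma-1}{\gamma}}(x).
\]

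Next I integrate over $\R^N$ and apply Hölder's inequality with the conjugate exponents $\gamma$ and $\frac{\gamma}{\gamma-1}$. The factor $|f-g|$ goes into $L^\gamma$, which gives $\|f-g\|_{L^\gamma}$. The factor $(f^\gamma+g^\gamma)^{\frac{\gamma-1}{\gamma}}$ goes into $L^{\frac{\gamma}{\gamma-1}}$, and its norm there is exactly $\big(\intr f^\gamma+g^\gamma\,\dx\big)^{\frac{\gamma-1}{\gamma}}$. Multiplying the two norms gives the stated inequality.

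There is no real obstacle. The only point to check is that the exponent bookkeeping in Hölder's inequality matches, and it does because $\frac{1}{\gamma}+\frac{\gamma-1}{\gamma}=1$. The finiteness of every term follows from $f,g\in L^\gamma_+(\R^N)$.

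The lemma is intended for the pressure-tensor estimate in Corollary \ref{quant}. There one takes $f=\rho^\e$ and $g=\rho$ and combines the result with \eqref{milan}, the uniform bound on $\intr (\rho^\e)^\gamma\,\dx$ coming from \eqref{eq:Pub}, the identity \eqref{bridge}, and the bound \eqref{arch}.
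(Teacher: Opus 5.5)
Your proof is correct and follows the paper's argument. Both use the mean-value pointwise bound $|a^\gamma-b^\gamma|\le\gamma|a-b|\max\{a,b\}^{\gamma-1}$ followed by H\"older's inequality with exponents $\gamma$ and $\frac{\gamma}{\gamma-1}$; the only cosmetic difference is that you replace $\max\{a,b\}^{\gamma-1}$ by $(a^\gamma+b^\gamma)^{\frac{\gamma-1}{\gamma}}$ pointwise before applying H\"older, whereas the paper applies H\"older with $\max\{f,g\}$ and bounds $\int\max\{f,g\}^\gamma\,\dx\le\int f^\gamma+g^\gamma\,\dx$ afterwards.
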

\begin{proof}
For any $a,b\ge 0$, we observe that
\[
|a^\gamma-b^\gamma| \le \gamma |a-b| \max\{a,b\}^{\gamma-1}
\]
Hence, by H\"older's inequality, we obtain
\[
\intr |f^\gamma-g^\gamma|\,\dv \le \gamma\|f-g\|_{L^\gamma(\R^N)} \lt(\intr \max\{f,g\}^\gamma \,\dv\rt)^{\frac{\gamma-1}{\gamma}}.
\]
\end{proof} 
Applying this lemma with \eqref{milan} provides
\[
\|(\rho^\e)^\gamma - (\rho)^\gamma\|_{L^\infty([0,T^*];L^1(\R^N))} \lesssim \e^{\frac{1}{2\max\{2,\gamma\}}}.
\]
We then recall \eqref{bridge} and \eqref{arch} to find that
\bq\label{rome}
\lt\|\int (v-u^\e)\otimes (v-u^\e)f^\e\,\dv - P_\psi(\rho)\bbI_N\rt\|_{L^1([0,T^*]\times\R^N)} \lesssim \e^{\frac{1}{2\max\{2,\gamma\}}}.
\eq
 We summarize that the quantitative error estimates are provided in \eqref{milan}--\eqref{bologna}--\eqref{gaeta}--\eqref{rome}.

\appendix

\section{Assumptions on $L_\psi$ for generalized Log Sobolev inequality}\label{app}

In \cite{CJMTU01}, the assumptions on the nonlinear diffusion law $L_\psi$ were imposed to obtain the generalized Log Sobolev inequality. They read:
\begin{itemize}
\item[(HF1)] $L_\psi: \R_0^+ \to \R$ is strictly increasing continuous function satisfying $L_\psi(0)=0$.
\item[(HF2)] $L_\psi|_{\R^+} \in \calC^3$.
\item[(HF3)] $\psi \in L_{\rm loc}^{1}([0,\infty))$, $\Phi'(s)=\psi(s)$ for all $s \in \R^+$.
\item[(HF4)] $L_\psi(s) \le \frac{d}{d-1}sL_\psi'(s)$ for all $s>0$.
\item[(HF5)] There exists $s_0 \in \R^+$ with $\psi'(s) \ge 0$ for all $s \in (0,s_0)$.
\end{itemize}

Indeed, (HF1)--(HF5) can be easily deduced from (A1) and (A4). (HF1), (HF3) and (HF5) are immediate from (A1). To see (HF4), we notice that
\[
L_\psi(s) = \int_0^s L_\psi'(u)\,\du \le \int_0^s L_\psi'(s)\,\du = s L_\psi(s)
\]
since $L_\psi''(s)=\psi'(s) + s\psi''(s) \ge 0$ by (A4). Lastly, as $L_\psi'''(s) = 2\psi''(s) + s\psi'''(s)$, $\psi \in \calC^3((0,\infty))$ implies (HF2).

\bibliographystyle{alpha}

\end{document}